\numberwithin{equation}{section}
\theoremstyle{plain}
\newtheorem{thm}{Theorem}[section]
\newtheorem{prop}[thm]{Proposition}
\newtheorem{cor}[thm]{Corollary}
\newtheorem{lemma}[thm]{Lemma}
\newtheorem{remark}[thm]{Remark}
\begin{document}
\title[Landau equation]
{The analytic Gelfand-Shilov smoothing effect of the Landau equation with hard potential
}

\author[C.-J. Xu \& Y. Xu]{Chao-Jiang Xu and Yan Xu}

\address{Chao-Jiang Xu and Yan Xu
\newline\indent
School of Mathematics and Key Laboratory of Mathematical MIIT,
\newline\indent
Nanjing University of Aeronautics and Astronautics, Nanjing 210016, China
}
\email{xuchaojiang@nuaa.edu.cn; xuyan1@nuaa.edu.cn}

\date{}

\subjclass[2010]{35B65,76P05,82C40}

\keywords{Spatially inhomogeneous Landau equation, analytic Gelfand-Shilov smoothing effect, hard potentials}

\begin{abstract}
In this paper, we study the Cauchy problem of the inhomogeneous Landau equation with hard potentials under the perturbation framework to global equilibrium.
We prove that the solution to the Cauchy problem enjoys the analytic Gelfand-Shilov regularizing effect with a Sobolev initial datum for positive time.
\end{abstract}

\maketitle

\section{Introduction}
The Cauchy problem of spatially inhomogeneous Landau equation reads
\begin{equation}\label{1-1}
\left\{
\begin{aligned}
 &\partial_t F+v\ \cdot\ \partial_x F=Q(F,\ F),\\
 &F|_{t=0}=F_0,
\end{aligned}
\right.
\end{equation}
where $F=F(t, x, v)\ge0$ is the density distribution function at time $t\ge0$ and $x\in\mathbb{T}^3, v\in\mathbb R^3$. The Landau bilinear collision operator is defined by
\begin{equation*}
    Q(G, F)(v)=\sum_{j, k=1}^3\partial_j\bigg(\int_{\mathbb R^3}a_{jk}(v-v_*)[G(v_*)\partial_kF(v)-\partial_kG(v_*)F(v)]dv_*\bigg),
\end{equation*}
where $\left(a_{jk}\right)$ is a symmetric non-negative matrix, with
\begin{equation}\label{matrix A}
   a_{jk}(v)=(\delta_{jk}|v|^2-v_jv_k)|v|^\gamma,\quad \gamma\ge-3.
\end{equation}
Here, $\gamma$ is a parameter which leads to the classification of hard potential if $\gamma>0$, Maxwellian molecules if $\gamma=0$, soft potential if $-3<\gamma<0$ and Coulombian potential if $\gamma=-3$.

The Landau equation is one of the fundamental kinetic equations. In the spatially homogeneous case which does not depend on $x$, there has been extensive work. The smoothness of solutions to the spatially homogeneous Landau equation with hard potential has been investigated by Desvillettes-Villani~\cite{DV}, and to~\cite{CLX1, L-1} for a study of the analytic smoothing, Gevrey regularity was treated in~\cite{CLX2, CLX3}, analytic Gelfand-Shilov smoothing effect was proven in~\cite{LX1}. The situation for Maxwellian molecules, existence, uniqueness and $C^{\infty}(\mathbb R^{3}_{v})$  smoothness for all $t>0$ have been studied in~\cite{V1}, with the initial datum has finite mass and energy. The analytic results can refer to~\cite{MX, M-2}, and the Gelfand-Shilov smoothing effect has been treated in~\cite{LX, M-1}. In the case of soft potentials, the existence and uniqueness results can refer to~\cite{FG, V2, W}. To the smoothing results, ~\cite{LX2} showed that the solution of the linear Landau equation with soft potentials enjoys the analytic smoothness property, and the Gelfand-Shilov smoothing effect for moderately soft potentials was addressed in~\cite{LX3}.

We are now interested in the spatially inhomogeneous case. In this work, we shall study the linearization of the Landau equation \eqref{1-1} near the Maxwellian distribution
\begin{equation*}
    \mu(v)=(2\pi)^{-\frac32}e^{-\frac{|v|^2}{2}}.
\end{equation*}
Considering the fluctuation of the density distribution function
\begin{equation*}
    F(t, x, v)=\mu(v)+\sqrt\mu(v)f(t, x, v),
\end{equation*}
since $Q(\mu, \mu)=0$, the Cauchy problem \eqref{1-1} is reduced to the form
\begin{equation}\label{1-2}
\left\{
\begin{aligned}
 &\partial_tf+v\ \cdot\ \partial_x f+\mathcal Lf=\Gamma(f, f),\\
 &f|_{t=0}=f_0,
\end{aligned}
\right.
\end{equation}
with $F_0=\mu+\sqrt\mu f_0$, where
\begin{equation*}
    \Gamma(f, f)=\mu^{-\frac12}Q(\sqrt\mu f, \sqrt\mu f),
\end{equation*}
\begin{equation*}
\mathcal{L}=\mathcal{L}_1+\mathcal{L}_2,\ \ \ \mbox{with}\ \ \    \mathcal L_1f=-\Gamma(\sqrt\mu, f), \quad \mathcal L_2f=-\Gamma(f, \sqrt\mu).
\end{equation*}

In ~\cite{G-1}, Guo showed the global-in-time existence and uniqueness of solutions to the Landau equation near Maxwellians in regular Sobolev space. In~\cite{CLL}, Chen, Desvillettes and He studied the smoothing effects for classical solutions. $C^{\infty}$ smoothing for the weak solution to the Landau equation with initial data that is bounded by a Gaussian in the velocity variable~\cite{H-1}. The Cauchy problem of the nonlinear equation for Maxwellian molecules, under the perturbation framework to global equilibrium and the $H^{r}_{x}(L^{2}_{v}), r>3/2$ initial datum small enough, enjoys analytic smoothing effect~\cite{M-2}. In \cite{DLSR}, Duan, Liu, Sakamoto and Strain prove the existence of solutions with mind initial data. In~\cite{C-L-X}, Cao, Li and Xu showed that the solution of the Landau equations with hard potential in the perturbation setting enjoys the analytic smoothing effect in both spatial and velocity variables.

Let $\mathcal{A}(\mathbb{T}_x^3 )$ be the analytical space, which is the space of smooth functions satisfying:  $u\in \mathcal{A}(\mathbb{T}_x^3 )$ if there exists a constant $C>0$ such that
$$
\|\partial^{\alpha}_{x}u\|_{L^{2}(\mathbb{T}_x^3)}\le C^{|\alpha|+1}\alpha!, \quad \forall\alpha\in\mathbb N^{3}\, .
$$
For $\sigma, \nu>0$ and $\sigma+\nu\ge 1$, the Gelfand-Shilov space $S^{\sigma}_{\nu}(\mathbb{R}^n)$ is the space of the smooth functions $u$ satisfying there exist the constant $C>0$ such that
$$
\|v^{\beta}\partial^{\alpha}_{v}u\|_{L^{2}(\mathbb R^{n})}\le C^{|\alpha|+|\beta|+1}(\alpha!)^{\sigma}(\beta!)^{\nu},\quad \forall\alpha, \beta\in\mathbb N^{3}.
$$

Now, we define the creation, annulation operators and the gradient of
$$
\mathcal H=-\triangle_v+\frac{|v|^2}{4},
$$
as follows
\begin{equation*}
    A_{\pm, k}=\frac12v_{k}\mp\partial_{v_{k}}, (1\le k\le3), \quad A_{\pm}^{\alpha}=A_{\pm, 1}^{\alpha_{1}}A_{\pm, 2}^{\alpha_{2}}A_{\pm, 3}^{\alpha_{3}}, (\alpha\in\mathbb N^{3}),
\end{equation*}
\begin{equation*}
    \nabla_{\mathcal H_{\pm}}=(A_{\pm, 1}, A_{\pm, 2}, A_{\pm, 3}).
\end{equation*}
For $m\ge1$, we define the norm
\begin{equation}\label{norm}
        \|\nabla_{\mathcal H_{+}}^{m}f\|^{2}_{L^2(\mathbb R^3)}=\sum_{k=1}^{3}\|A_{k, +}\nabla_{\mathcal H_{+}}^{m-1}f\|^{2}_{L^2(\mathbb R^3)}=\sum_{|\alpha|=m}\frac{m!}{\alpha!}\|A_{+}^{\alpha}f\|^{2}_{L^2(\mathbb R^3)}.
\end{equation}
For notation simplicity, we denote $L^{2}_{x, v}=L^{2}(\mathbb T^{3}_{x}\times\mathbb R^{3}_{v})$ and $H^2_xL^2_v=H^2(\mathbb T^3_x; L^2(\mathbb R^3_v))$, for $N, s\ge0$, define the classical Sobolev space
$$H_{x, v}^{N}=\left\{u: \  \sum_{|\alpha|+|\beta|\le N}\|\partial^{\alpha}_{x}\partial^{\beta}_{v}u\|^{2}_{L^{2}_{x, v}}<+\infty\right\},$$
and define the weighted Sobolev space
$$H_{x, v}^{N, s}=\left\{u: \  \sum_{|\alpha|+|\beta|\le N}\|(1+|v|^{2})^{\frac s2}\partial^{\alpha}_{x}\partial^{\beta}_{v}u\|^{2}_{L^{2}_{x, v}}<+\infty\right\}.$$

In this work, we consider the Cauchy problem \eqref{1-2} with $\gamma\ge0$. Combine the results of \cite{C-L-X, CLL, DLSR, G-1}, we suppose that,
there exists a small constant $\epsilon>0$ such that
$$
\|f_{0}\|_{H^2_xL_v^2}\le\epsilon,
$$
 and the Cauchy problem \eqref{1-2} admits a global-in-time solution $f$  satisfying
 \begin{equation}
 	\label{smint}
	\|f\|_{L^{\infty}([0, \infty[; H^2_xL_v^2)} \leq \epsilon,
\end{equation}
with the regularity
\begin{equation}
 	\label{smooth}
C^{\infty}(]0, \infty[; \cap_{s\ge0} H^{\infty, s}_{x, v}(\mathbb T^{3}_{x}\times\mathbb R^{3}_{v})).
\end{equation}

Our main result shows the analytic Gelfand-Shilov regularizing effect of the solution, it can be stated as follows.

\begin{thm}\label{mainresult}
Assume that  $\gamma \geq 0 $, under the assumption \eqref{smint} and \eqref{smooth},
 we have that the solution $f(t)$  of Cauchy problem \eqref{1-2}  belongs to  $ \mathcal{A}(\mathbb{T}_x^3; S^1_1( \mathbb{R}_v^3))$ for all $t >0$.

Moreover, for $\lambda>\frac{3}{2}$, there exists a constant $C>0$ depending on $\gamma$ and $\lambda$,  such that, for all $ \alpha \in \mathbb{N}^3,\  m \in \mathbb{N}, $
\begin{equation}\label{alpha1}
	\tilde{t}^{(\lambda+1)|\alpha|+ \lambda m} \|{\partial_x^{\alpha} \nabla_{\mathcal H_{+}}^{m}  f(t)}\|_{H^2_{x}L^2_v} \leq    C^{|\alpha|+m+1} (|\alpha|+m)!,\quad \forall t>0,
\end{equation}
where $\tilde{t}=\min\{t, 1\}$.
\end{thm}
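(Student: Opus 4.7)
I would prove the estimate \eqref{alpha1} by induction on the total derivative order $N=|\alpha|+m$. The base cases $N\le 2$ follow from the a priori bounds \eqref{smint}--\eqref{smooth} combined with a standard short-time energy estimate; for the inductive step, assuming \eqref{alpha1} at all orders $\le N-1$, I would show that it persists at order $N$ with the same constant $C$, chosen large enough depending on $\gamma$ and $\lambda$.

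\textbf{Weighted energy identity.} Applying $\partial_x^\alpha A_+^\beta$ with $|\beta|=m$ to \eqref{1-2} gives
\begin{equation*}
(\partial_t+v\cdot\partial_x+\mathcal L)\partial_x^\alpha A_+^\beta f
=-[A_+^\beta,v\cdot\partial_x]\partial_x^\alpha f-[\mathcal L,A_+^\beta]\partial_x^\alpha f+\partial_x^\alpha A_+^\beta\Gamma(f,f).
\end{equation*}
Taking the $H^2_x L^2_v$ inner product with $\partial_x^\alpha A_+^\beta f$ eliminates the transport term (skew-symmetry) and exposes the coercivity of $\mathcal L$ on the orthogonal complement of its hydrodynamic null space, enhanced by the hard-potential weight $|v|^{\gamma/2}$, $\gamma\ge 0$. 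Multiplying by $\tilde t^{\,2\theta_{\alpha,m}}$ with $\theta_{\alpha,m}=(\lambda+1)|\alpha|+\lambda m$ and integrating over $[0,t]$ produces from $\partial_t$ a term $\theta_{\alpha,m}\tilde t^{\,2\theta_{\alpha,m}-1}\|\partial_x^\alpha A_+^\beta f\|^2$ of the same order, absorbed into the inductive quantity provided $\lambda>3/2$, which is also the threshold guaranteeing convergence of the Cauchy-product sums arising below.

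\textbf{Three source terms.} First, $[A_{+,k},v\cdot\partial_x]=-\partial_{x_k}$, so $[A_+^\beta,v\cdot\partial_x]\partial_x^\alpha f$ is a sum of terms $\beta_k A_+^{\beta-e_k}\partial_x^{\alpha+e_k}f$ of total order $N$ but with one more $x$-derivative. The key arithmetic is $\theta_{\alpha+e_k,m-1}-\theta_{\alpha,m}=1$: the weighted version therefore carries one extra power of $\tilde t$, exactly enough to close the estimate via Cauchy--Schwarz against the dissipation. Second, $[\mathcal L,A_+^\beta]$ is controlled by the hard-potential structure of $\mathcal L=\mathcal L_1+\mathcal L_2$: the harmonic-oscillator piece commutes algebraically, while the remainder produces $|v|^\gamma$ times lower-order $A_+$-derivatives, absorbed by the coercive dissipation after invoking \eqref{norm}. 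Third, Leibniz gives
\begin{equation*}
\partial_x^\alpha A_+^\beta\Gamma(f,f)=\sum_{\substack{\alpha_1+\alpha_2=\alpha\\ \beta_1+\beta_2=\beta}}\binom{\alpha}{\alpha_1}\binom{\beta}{\beta_1}\Gamma(\partial_x^{\alpha_1}A_+^{\beta_1}f,\partial_x^{\alpha_2}A_+^{\beta_2}f),
\end{equation*}
bounded via a Guo-type trilinear upper estimate in $H^2_x L^2_v$: the low-order factor is tamed by the induction hypothesis together with \eqref{smint} and the Sobolev embedding $H^2_x\hookrightarrow L^\infty_x$, while the high-order factor is absorbed into the dissipation on the left.

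\textbf{Main obstacle.} The most delicate point is the transport commutator: it trades a velocity-type derivative for a spatial derivative at the same total order, and since the $x$-direction is regularized only through hypoelliptic effects, a symmetric time weight would force a Gevrey loss rather than the analytic rate $N!$. Overcoming this requires the asymmetric exponent $(\lambda+1)|\alpha|+\lambda m$, the unit gap between the two indices being the currency spent by the transport bracket; combined with the multinomial estimates appearing in the Leibniz expansion of $\Gamma$, whose summability is secured by $\lambda>3/2$, the induction closes and yields \eqref{alpha1}, hence $f(t)\in\mathcal A(\mathbb T_x^3;S^1_1(\mathbb R^3_v))$ for every $t>0$.
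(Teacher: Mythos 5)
There is a genuine gap at the point you yourself flag as ``the most delicate point,'' and the asymmetric weight does not repair it. In your scheme the time weight $\tilde t^{\,2\theta_{\alpha,m}}$ sits \emph{outside} the derivatives, so two bad terms appear. First, the transport commutator produces $\beta_k\,\partial_x^{\alpha+e_k}A_+^{\beta-e_k}f$, which has the \emph{same} total order $N$ as the quantity being estimated: it is not covered by the induction hypothesis at order $N-1$, and it cannot be absorbed by the dissipation either, since the coercivity of $\mathcal L_1$ controls only the velocity norm $|||\cdot|||_\sigma$ of $\partial_x^\alpha A_+^\beta f$ and gains nothing in $x$. The identity $\theta_{\alpha+e_k,m-1}=\theta_{\alpha,m}+1$ only says the new term carries a compatible weight; when you try to close by Cauchy--Schwarz you are forced to split
$\tilde\tau^{2\theta_{\alpha,m}}|(\partial_x^{\alpha+e_k}A_+^{\beta-e_k}f,\partial_x^\alpha A_+^\beta f)|
\le \tilde\tau^{\theta_{\alpha+e_k,m-1}}\|\partial_x^{\alpha+e_k}A_+^{\beta-e_k}f\|\cdot \tilde\tau^{\theta_{\alpha,m}-1}\|\partial_x^\alpha A_+^\beta f\|$,
and the second factor is the target quantity multiplied by $\tilde\tau^{-1}$, which is not integrable near $\tau=0$; no redistribution of the powers of $\tau$ makes both factors controlled, because $\theta_{\alpha,m}-\theta_{\alpha',m'}\ge\lambda>1$ for any lower-order pair, so a weight deficit of exactly $1$ never matches an inductive quantity. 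Second, differentiating the external weight itself produces $\theta_{\alpha,m}\tilde t^{\,2\theta_{\alpha,m}-1}\|\partial_x^\alpha A_+^\beta f\|^2$, i.e.\ the target quantity times $\theta_{\alpha,m}/\tilde t$, with the same non-integrable singularity and an extra factor of order $N$; it is not ``absorbed into the inductive quantity'' by any choice of $\lambda$.

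The paper's proof is built precisely to avoid this: the time weights are packaged \emph{inside} the auxiliary vector fields $H_\delta=\frac{1}{\delta+1}t^{\delta+1}\partial_{x_1}-t^{\delta}A_{+,1}$, for which $[H_\delta^k,\partial_t+v\cdot\partial_x]=\delta k\,t^{\delta-1}A_{+,1}H_\delta^{k-1}$ \eqref{kehigher}. This commutator is genuinely one order lower in the $H_\delta$-hierarchy, contains no spatial derivative, and the prefactor $A_{+,1}$ is exactly what the dissipation $|||\cdot|||_\sigma$ controls via \eqref{H}; the weight-differentiation term and the transport term cancel against each other inside $H_\delta$ by construction. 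One then proves the uniform bound \eqref{Hmn} on $\|H_{\delta_1}^mH_{\delta_2}^nf\|_{H^2_xL^2_v}$ by induction on $m+n$ (Proposition \ref{prop 4.2}) and recovers $t^{\lambda+1}\partial_{x_1}$ and $t^{\lambda}A_{+,1}$ as linear combinations of $H_{\delta_1}$ and $H_{\delta_2}$ with two distinct exponents $\delta_1=\lambda$, $\delta_2=\lambda/2+3/4$ via \eqref{linear combination}. Your treatment of the Landau commutators $[\mathcal L,A_+^\beta]$ and of the Leibniz expansion of $\Gamma(f,f)$ is consistent in spirit with Propositions \ref{H-m-Gamma}--\ref{H-m-L-1}, but without the vector-field device (or an equivalent mechanism) the induction does not close, so the proposal as written does not prove the theorem.
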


\begin{remark}
     The above result improve that in~\cite{C-L-X} which shows that  the solution $f(t)$ of Cauchy problem \eqref{1-2} belongs to $\mathcal{A}(\mathbb{T}_x^3\times \mathbb{R}_v^3)$, now we have proven that the solution belongs to $\mathcal{A}(\mathbb{T}_x^3; S^1_1( \mathbb{R}_v^3))$, which means that it is analytic in $\mathbb T^3\times\mathbb R^3$ and exponential decay at infinity with respect to $v$. Otherwise, for $\lambda>\frac{3}{2}$, we have proven that there exist $C_1, C_2$ such that for any $\alpha, \beta, \sigma\in\mathbb N^{3}$,
\begin{equation}\label{v}
\|v^{\beta}\partial^{\alpha}_{v}\partial^\sigma_x f(t)\|_{H^2_{x}L^2_v}\le \left(\frac{C_1}{\tilde{t}^{(\lambda+1)}}\right)^{|\sigma|+1}   \left(\frac{C_2}{\tilde{t}^{\lambda}}\right)^{|\alpha|+|\beta|+1} \alpha!\ \beta!\ \sigma!,\quad \forall t>0\ .
\end{equation}
The above estimates \eqref{v} can be derived from \eqref{alpha1}, by using the Proposition 5.2 of~\cite{LX1}, and Theorem 2.1 of~\cite{GPR}, and the fact that $(m+n)!\le2^{m+n}m!n!$ for all $m, n\in\mathbb N$.

The regularizing effect of  $ f(t)\in \mathcal{A}(\mathbb{T}_x^3; S^1_1( \mathbb{R}_v^3))$ for $t >0$ is sharp for the nonlinear inhomogeneous Landau equation with hard potential $\gamma>0$.
\end{remark}

The paper is arranged as follows. In section \ref{s2}, we give some preliminary results of the Landau operator and the idea of proof for the main Theorem. In section \ref{s3}, we study the commutator of Landau operators with vector fields. In section \ref{s4}, we give the energy estimation for multi-directional derivation, then prove the analytic Gelfand-Shilov regularizing effect of the Landau equation. In section \ref{s5}, we prove some technical Lemmas which had been used in section \ref{s4}.

\section{Preliminary results and idea of proof for main Theorem}\label{s2}

\subsection*{Preliminary results of the Landau Operator}
In the following, the notation $A\lesssim B$ means there exists a constant $C>0$ such that $A\le CB$. For simplicity, with $\gamma\in\mathbb R$, we denote the weighted Lebesgue spaces
\begin{equation*}
    \|\langle\cdot\rangle^\gamma f\|_{L^p(\mathbb R^3)}=\|f\|_{L^p_{\gamma}(\mathbb R^3)},\quad 1\le p\le\infty,
\end{equation*}
where we use the notation $\langle v\rangle=(1+|v|^2)^{\frac12}$. For the
matrix $(a_{jk})$ defined in \eqref{matrix A}, we denote $\bar a_{jk}=a_{jk}*\mu$ and the norm
\begin{equation*}
    |||f|||^2_\sigma=\sum_{j, k=1}^3\int\left(\bar a_{jk}\partial_jf\partial_kf+\frac14\bar a_{jk}v_jv_kf^2\right)dv, \quad f\in\mathcal S(\mathbb R^{3}),
\end{equation*}
and
$$
|||f|||^2_{H^{2}_{x}, \sigma}=\sum_{|\alpha|\le2}\int_{\mathbb T^{3}_{x}}|||\partial^{\alpha}_{x}f(x, \cdot)|||^2_\sigma dx.
$$
From corollary 1 of~\cite{G-1} and Proposition 2.3 of~\cite{LX1}, for $\gamma\ge 0$, one has
\begin{equation}\label{H}
    |||f|||^2_{\sigma}
    \ge C_{1}\|\langle \cdot\rangle^{\frac{\gamma}{2}}\nabla_{\mathcal H_{\pm}} f\|^2_{L^{2}(\mathbb R^{3}_{v})}\ge C_{1}\|\nabla_{\mathcal H_{\pm}} f\|^2_{L^{2}(\mathbb R^{3}_{v})}-\|f\|^{2}_{L^{2}(\mathbb R^{3}_{v})}.
\end{equation}

First, we review the representations for $\mathcal L_{1}$, $\mathcal L_{2}$ and $\Gamma$.
\begin{lemma}~(\cite{LX1}) \label{representations}
For $f, g\in\mathcal S(\mathbb R^{3}_{v})$, we have
$$\mathcal L_{1}f=\sum_{j, k=1}^{3}A_{+, j}\left((a_{jk}*\mu)A_{-, k}f\right),\ \mathcal L_{2}f=-\sum_{j, k=1}^{3}A_{+, j}\left(\sqrt\mu(a_{jk}*(\sqrt\mu A_{-, k}f))\right),$$
$$\Gamma(f, g)=\sum_{j, k=1}^{3}A_{+, j}\left((a_{jk}*(\sqrt\mu f))A_{+, k}g\right)-\sum_{j, k=1}^{3}A_{+, j}\left((a_{jk}*(\sqrt\mu A_{+, k}f))g\right).$$
\end{lemma}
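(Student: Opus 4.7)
The plan is to derive all three identities directly from the definitions, by exploiting the fact that the creation operators $A_{+,k}$ are precisely the conjugates of $-\partial_{v_k}$ by $\sqrt{\mu}$. A one-line computation using $\partial_k\sqrt{\mu} = -(v_k/2)\sqrt{\mu}$ yields the two working identities
\begin{equation*}
\partial_{v_k}(\sqrt{\mu}\,h) = -\sqrt{\mu}\,A_{+,k}h, \qquad \mu^{-1/2}\partial_{v_j}(\sqrt{\mu}\,h) = -A_{+,j}h,
\end{equation*}
which will convert every inner derivative in the definition of $Q$, and every outer derivative once combined with the prefactor $\mu^{-1/2}$ in the definition of $\Gamma$, into an $A_{+}$-operator acting on the bare function.

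I would first establish the formula for $\Gamma(f,g)$. Substituting $G=\sqrt{\mu}f$ and $F=\sqrt{\mu}g$ into the definition of $Q$, the identities above applied to $\partial_k(\sqrt{\mu}g)(v)$ and to $\partial_k(\sqrt{\mu}f)(v_*)$ (the latter understood in the $v_*$ variable) extract an $A_{+,k}$ from each inner derivative. Multiplying by $\mu^{-1/2}$ and pulling the outer $\partial_j$ inside with the help of the second identity then produces the two-term expression claimed for $\Gamma(f,g)$.

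The formulas for $\mathcal L_1 f = -\Gamma(\sqrt{\mu},f)$ and $\mathcal L_2 f = -\Gamma(f,\sqrt{\mu})$ follow by substituting $\sqrt{\mu}$ into the formula just obtained for $\Gamma$ and simplifying with two algebraic facts. First, $A_{+,k}\sqrt{\mu}=v_k\sqrt{\mu}$, immediate from the definition of $A_{+,k}$. Second, the null structure $\sum_{k} a_{jk}(u)u_k = 0$, an algebraic consequence of \eqref{matrix A}, which after integration against any reasonable $\varphi$ yields the convolution identity $\sum_{k} v_k(a_{jk}*\varphi) = \sum_{k} a_{jk}*(v_k\varphi)$. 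Combined with the elementary relation $A_{-,k} = v_k - A_{+,k}$ (the sum of the two definitions), these facts collapse the two terms arising from $\Gamma(\sqrt{\mu},f)$, respectively $\Gamma(f,\sqrt{\mu})$, into the single compact expression involving $A_{-,k}f$ stated in the lemma.

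The derivation is purely algebraic and all manipulations are legitimate on the Schwartz class; the only point requiring care is the bookkeeping distinguishing the integration variable $v_*$ from the point $v$ at which the operators act, especially when invoking the null identity inside the convolutions to swap $v_k$ and $v_{*k}$. I do not expect a substantive obstacle beyond this clerical care.
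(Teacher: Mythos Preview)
Your derivation is correct: the conjugation identities $\partial_k(\sqrt{\mu}\,h)=-\sqrt{\mu}\,A_{+,k}h$ and $\mu^{-1/2}\partial_j(\sqrt{\mu}\,h)=-A_{+,j}h$ do exactly the work you claim, the null relation $\sum_k a_{jk}(u)u_k=0$ indeed yields $\sum_k v_k(a_{jk}*\varphi)=\sum_k a_{jk}*(v_k\varphi)$, and together with $A_{-,k}=v_k-A_{+,k}$ these collapse the two specializations of the $\Gamma$ formula to the stated expressions for $\mathcal L_1$ and $\mathcal L_2$.

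Note, however, that the paper does not supply a proof of this lemma at all: it is quoted verbatim from \cite{LX1} and used as a black box. So there is no in-paper argument to compare your approach against; what you have written is the standard direct verification and is exactly what one would expect the cited reference to contain.
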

Next, we review the trilinear estimate of the nonlinear Landau operator.
\begin{lemma}~(\cite{LX1})\label{Gamma}
For any $f, g, h\in\mathcal S(\mathbb R^{3}_{v})$ and $\gamma\ge 0$, then
$$\left|(\Gamma(f, g), h)_{L^{2}_{v}}\right|\lesssim\|f\|_{L^{2}}|||g|||_{\sigma}|||h|||_{\sigma}.$$
\end{lemma}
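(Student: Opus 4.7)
The plan is to derive the trilinear estimate directly from the representation of $\Gamma(f,g)$ in Lemma \ref{representations}, by moving the $A_{+,j}$ derivatives onto $h$ via the adjoint identity $A_{+,j}^{*}=A_{-,j}$ on $L^{2}(\mathbb R^{3}_{v})$, and then bounding the resulting convolutions pointwise by $\|f\|_{L^{2}}$. Concretely, I would split
\begin{equation*}
(\Gamma(f,g),h)_{L^{2}_{v}}=J_{1}-J_{2},
\end{equation*}
\begin{equation*}
J_{1}=\sum_{j,k}\int (a_{jk}*(\sqrt{\mu}f))(A_{+,k}g)(A_{-,j}h)\,dv,\quad J_{2}=\sum_{j,k}\int (a_{jk}*(\sqrt{\mu}A_{+,k}f))\,g\,(A_{-,j}h)\,dv.
\end{equation*}

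In $J_{2}$ I cannot afford to keep an $A_{+,k}$ derivative on $f$, so I would use the identity $\sqrt{\mu(v_{*})}A_{+,k}f(v_{*})=-\partial_{v_{*,k}}(\sqrt{\mu}f)(v_{*})$ and integrate by parts in $v_{*}$ inside the convolution; together with the divergence identity $\sum_{k}\partial_{k}a_{jk}(v)=-2v_{j}|v|^{\gamma}$, this reduces the inner factor (after summing over $k$) to the scalar convolution $2(v_{j}|v|^{\gamma})*(\sqrt{\mu}f)$, which involves $f$ but none of its derivatives. Cauchy--Schwarz in $v_{*}$ together with the Gaussian decay of $\mu$ then yields the pointwise bounds $|a_{jk}*(\sqrt{\mu}f)(v)|\lesssim\|f\|_{L^{2}}\langle v\rangle^{\gamma+2}$ and $|(v_{j}|v|^{\gamma})*(\sqrt{\mu}f)(v)|\lesssim\|f\|_{L^{2}}\langle v\rangle^{\gamma+1}$. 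To close the estimate I would pair these weights via Cauchy--Schwarz in $v$ with lower bounds on $|||\cdot|||_{\sigma}$: from \eqref{H} one has $\|\langle v\rangle^{\gamma/2}\nabla_{\mathcal H_{\pm}}u\|_{L^{2}}\lesssim|||u|||_{\sigma}$, and from the diagonal term $\frac14\bar{a}_{jk}v_{j}v_{k}u^{2}$ in $|||u|||^{2}_{\sigma}$ together with the asymptotic $\bar{a}_{jk}v_{j}v_{k}\sim\langle v\rangle^{\gamma+2}$ one obtains $\|\langle v\rangle^{(\gamma+2)/2}u\|_{L^{2}}\lesssim|||u|||_{\sigma}$. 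Applied to $g$ and $h$, these two weighted bounds exactly absorb the factors $\langle v\rangle^{\gamma+1}$ and (part of) $\langle v\rangle^{\gamma+2}$ coming from the convolutions.

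The delicate point is the weight-matching in $J_{1}$: the naive scalar bound $\langle v\rangle^{\gamma+2}$ on $a_{jk}*(\sqrt{\mu}f)$ is insensitive to the anisotropy of $a_{jk}(v-v_{*})$, which as a $3\times 3$ matrix equals the projection $I-\widehat{(v-v_{*})}\widehat{(v-v_{*})}^{\,T}$ multiplied by $|v-v_{*}|^{\gamma+2}$. Since $|||g|||_{\sigma}$ only controls $\|\langle v\rangle^{\gamma/2}A_{+,k}g\|_{L^{2}}$ in the derivative direction, rather than the full $\langle v\rangle^{(\gamma+2)/2}$ weight, I would perform a matrix-valued Cauchy--Schwarz on the pointwise positive-semidefinite matrix $(a_{jk}*(\sqrt{\mu}f))(v)$, viewed as a bilinear form in the indices $j,k$, so that the resulting quadratic expression in $(A_{+,k}g,A_{-,j}h)$ can be compared directly with the matrix $\bar{a}_{jk}(v)$ appearing in the definition of $|||g|||_{\sigma}$ and $|||h|||_{\sigma}$, and thus absorbed into their product together with the factor $\|f\|_{L^{2}}$. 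This anisotropic matching, rather than any routine calculation, is where the bulk of the work lies.
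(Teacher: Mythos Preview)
The paper does not prove this lemma; it is quoted from \cite{LX1} without argument, so there is no in-paper proof to compare against.

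Your outline is essentially the standard route and is correct, with one slip. The matrix $(a_{jk}*(\sqrt\mu f))(v)$ is \emph{not} positive semidefinite for sign-changing $f$, so a matrix Cauchy--Schwarz on it is not licit as stated. The fix is immediate: either replace $f$ by $|f|$ before the matrix step (since $\|\,|f|\,\|_{L^2}=\|f\|_{L^2}$), or apply the Cauchy--Schwarz for positive semidefinite forms at the level of the kernel $a_{jk}(v-v_*)$ itself, i.e.\ bound $\big|\sum_{j,k}a_{jk}(v-v_*)\xi_k\eta_j\big|$ by the product of the two quadratic forms and only then integrate against $\sqrt\mu(v_*)|f(v_*)|$.

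After that correction, the substantive work is exactly what you flag: one needs the pointwise matrix comparison $\big(a_{jk}*(\sqrt\mu|f|)\big)(v)\lesssim \|f\|_{L^2}\,\bar a_{jk}(v)$ in the sense of symmetric matrices. This holds because the Gaussian weight confines $v_*$ to a bounded region, so for large $|v|$ both matrices share the same eigenstructure---eigenvalue $\sim\langle v\rangle^{\gamma}$ along $\hat v$ and $\sim\langle v\rangle^{\gamma+2}$ orthogonal to $\hat v$---and the ratio of the corresponding eigenvalues is bounded by $\|f\|_{L^2}$ via Cauchy--Schwarz in $v_*$; for bounded $|v|$ the inequality is elementary. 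This eigenvalue analysis is precisely the content behind Corollary~1 of \cite{G-1}, and you should expect to invoke or reproduce it rather than treat it as a one-line step. Your handling of $J_2$ (the integration by parts in $v_*$, the divergence identity $\sum_k\partial_k a_{jk}=-2v_j|v|^\gamma$, and the weight splitting $\langle v\rangle^{\gamma+1}=\langle v\rangle^{(\gamma+2)/2}\langle v\rangle^{\gamma/2}$) is clean and correct.
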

\begin{lemma}
For any $f, g, h\in H^{\infty}(\mathbb T^{3}_{x}; \mathcal S(\mathbb R^{3}_{v}))$ and $\gamma\ge 0$, then
\begin{equation}\label{Gamma-1}
     \left|(\Gamma(f, g), h)_{H^{2}_{x}L^{2}_{v}}\right|\le C_{2}\|f\|_{H^{2}_{x}L^{2}_{v}}|||g|||_{H^{2}_{x}, \sigma}|||h|||_{H^{2}_{x}, \sigma}.
\end{equation}
\end{lemma}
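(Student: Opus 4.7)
The plan is to promote the pointwise-in-$x$ trilinear bound of Lemma~\ref{Gamma} to the $H^2_x L^2_v$ setting by combining Leibniz's rule in the $x$-variable with the Sobolev embeddings $H^2(\mathbb T^3)\hookrightarrow L^{\infty}(\mathbb T^3)$ and $H^1(\mathbb T^3)\hookrightarrow L^{4}(\mathbb T^3)$ available in dimension three. Since $\Gamma$ is bilinear and acts only on $v$, Leibniz gives, for $|\alpha|\le 2$,
\[
\partial_x^{\alpha}\Gamma(f,g)=\sum_{\alpha_1+\alpha_2=\alpha}\binom{\alpha}{\alpha_1}\Gamma\bigl(\partial_x^{\alpha_1}f,\,\partial_x^{\alpha_2}g\bigr),
\]
so $(\Gamma(f,g),h)_{H^2_x L^2_v}$ decomposes into a finite sum of integrals of the form $\int_{\mathbb T^3_x}(\Gamma(\partial_x^{\alpha_1}f,\partial_x^{\alpha_2}g),\partial_x^{\alpha}h)_{L^2_v}\,dx$. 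Applying Lemma~\ref{Gamma} pointwise in $x$ bounds each integrand by $\|\partial_x^{\alpha_1}f(x,\cdot)\|_{L^2_v}\,|||\partial_x^{\alpha_2}g(x,\cdot)|||_{\sigma}\,|||\partial_x^{\alpha}h(x,\cdot)|||_{\sigma}$.

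The remaining task is a Hölder-in-$x$ estimate, choosing the integrability exponents according to the split $(|\alpha_1|,|\alpha_2|)$. When $|\alpha_1|=0$, one places $\|f\|_{L^\infty_x L^2_v}\lesssim\|f\|_{H^2_x L^2_v}$ (Sobolev) and sends the other two factors to $L^2_x$ via the definition of $|||\cdot|||_{H^2_x,\sigma}$. The symmetric case $|\alpha_2|=0$ with $|\alpha_1|=|\alpha|=2$, and the balanced case $(|\alpha_1|,|\alpha_2|)=(1,1)$ with $|\alpha|=2$, require $L^\infty_x$ or $L^4_x$ control of $|||\partial_x^{\alpha_2}g|||_{\sigma}$, which I obtain by expanding $|||g(x,\cdot)|||_{\sigma}^{2}$ using its definition together with the pointwise bound $\bar a_{jk}(v)\lesssim\langle v\rangle^{\gamma+2}$, thereby dominating it by a weighted $H^1_v$ norm in $v$ to which the Sobolev embedding in $x$ applies.

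The main obstacle is precisely this last point: since $|||\cdot|||_{\sigma}$ is the square root of a quadratic form in $g$, one cannot differentiate it in $x$ and chain-rule it to $|||\partial_x g|||_{\sigma}$. The work-around is to bypass the seminorm and argue through the underlying weighted Lebesgue norms $\|\langle v\rangle^{(\gamma+2)/2}g\|_{L^2_v}$ and $\|\langle v\rangle^{\gamma/2}\nabla_v g\|_{L^2_v}$, to which the Sobolev embedding in $x$ applies term-by-term, and which are dominated by $|||g|||_{H^2_x,\sigma}$ via the lower bound in \eqref{H}. Summing over the finitely many splits $(\alpha_1,\alpha_2,\alpha)$ then yields \eqref{Gamma-1} with a constant $C_2$ depending only on $\gamma$ and the implicit constants from Lemma~\ref{Gamma}.
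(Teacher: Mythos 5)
Your overall scheme --- Leibniz in $x$, the trilinear bound of Lemma~\ref{Gamma} applied pointwise in $x$, then H\"older in $x$ with the embeddings $H^2(\mathbb T^3)\hookrightarrow L^\infty$ and $H^1(\mathbb T^3)\hookrightarrow L^4$ --- is a legitimate physical-space alternative to what the paper actually does (Fourier series in $x$ plus Young's convolution inequality, which is what the reference to Proposition 4.1 of \cite{M-2} amounts to), and your decomposition into splits $(|\alpha_1|,|\alpha_2|)$ with the corresponding exponents is the right bookkeeping. The gap is exactly in the step you flagged: the control of $\bigl\|\,|||\partial_x^{\alpha_2}g(x,\cdot)|||_{\sigma}\bigr\|_{L^p_x}$, $p\in\{4,\infty\}$, by $|||g|||_{H^2_x,\sigma}$. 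Your work-around --- sandwich $|||\cdot|||_{\sigma}$ between isotropic weighted Lebesgue norms, apply the scalar Sobolev embedding in $x$ to each, then recover $|||\cdot|||_{H^2_x,\sigma}$ via the lower bound \eqref{H} --- does not close, because the two ends of the sandwich carry different weights. The pointwise bound $|\bar a_{jk}(v)|\lesssim\langle v\rangle^{\gamma+2}$ yields only the upper bound $|||g|||_{\sigma}^2\lesssim\|\langle v\rangle^{(\gamma+2)/2}\nabla_v g\|_{L^2_v}^2+\|\langle v\rangle^{(\gamma+4)/2}g\|_{L^2_v}^2$, whereas \eqref{H} (equivalently, Corollary 1 of \cite{G-1}) controls from below only $\|\langle v\rangle^{\gamma/2}\nabla_{\mathcal H_\pm}g\|_{L^2_v}$, i.e.\ weight $\gamma/2$ on $\nabla_v g$ and $(\gamma+2)/2$ on $g$; the coercivity of $|||\cdot|||_\sigma$ is anisotropic and gives the radial component of $\nabla_v g$ only the weight $\langle v\rangle^{\gamma/2}$. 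You therefore lose a full power of $\langle v\rangle$ between the upper and lower bounds, and the chain of inequalities does not produce \eqref{Gamma-1}.

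The repair is to not open up the seminorm at all: view $g(x,\cdot)$ as a map from $\mathbb T^3_x$ into the Hilbert space $\mathcal H_\sigma$ normed by $|||\cdot|||_{\sigma}$, and invoke the vector-valued embeddings $H^2(\mathbb T^3;\mathcal H_\sigma)\hookrightarrow L^\infty(\mathbb T^3;\mathcal H_\sigma)$ and $H^1(\mathbb T^3;\mathcal H_\sigma)\hookrightarrow L^4(\mathbb T^3;\mathcal H_\sigma)$. No chain rule on the seminorm is needed: for the $L^\infty$ case,
\begin{equation*}
|||g(x,\cdot)|||_{\sigma}\le\sum_{k\in\mathbb Z^3}|||\hat g(k,\cdot)|||_{\sigma}\le\Bigl(\sum_{k}\langle k\rangle^{-4}\Bigr)^{1/2}\Bigl(\sum_{k}\langle k\rangle^{4}|||\hat g(k,\cdot)|||_{\sigma}^{2}\Bigr)^{1/2}\lesssim|||g|||_{H^2_x,\sigma},
\end{equation*}
and for the $L^4$ case one uses that the scalar function $x\mapsto|||g(x,\cdot)|||_{\sigma}$ satisfies $|\partial_{x_j}|||g(x,\cdot)|||_{\sigma}|\le|||\partial_{x_j}g(x,\cdot)|||_{\sigma}$ almost everywhere, so the scalar embedding $H^1(\mathbb T^3)\hookrightarrow L^4(\mathbb T^3)$ applies. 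With that substitution your argument is complete and is, at bottom, the same computation as the paper's Fourier-side proof.
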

The proof of this proposition is similar to Proposition 4.1 of \cite{M-2}, we need to use the Fourier transformation of $x$ variable to prove $H^{2}_{x}$ is an algebra.

\bigskip
\noindent
{\bf Auxiliary  vector fields and commutators with kinetic operators}\\
We define now a family of auxiliary  vector fields $H_\delta $  which is first introduced by (\cite{CLX})
   \begin{equation*}
   	\label{vecM}
   H_\delta= \frac{1}{\delta+1}t^{\delta+1} \partial_{x_1}- t^{\delta} A_{+, 1},
   \end{equation*}
   where $\delta>\frac{3}{2}$.
The advantage of these vector fields is that the spatial derivatives are not involved in the commutator between $H_\delta$ with the kinetic operators $\partial_t+v \cdot \partial_x $, precisely,  recalling $[\cdot,\, \cdot]$ stands for the commutator between two operators, we  split
\begin{equation*}
	[H_\delta, \  \partial_t+v \cdot \partial_x ]=[H_\delta, \  \partial_t  ]+[H_\delta, \  v \cdot \partial_x ],
\end{equation*}
 and observe the fact that
\begin{align*}
	[H_\delta, \  \partial_t  ]f=H_\delta\partial_t f-\partial_t H_\delta f= -t^{\delta}\partial_{x_1}f +\delta t^{\delta-1} A_{+, 1}f,
\end{align*}
and
\begin{align*}
	[H_\delta, \  v \cdot \partial_x ]f=H_\delta (v \cdot \partial_x f)-(v \cdot \partial_x) H_\delta f=t^{\delta}\partial_{x_1}f.
\end{align*}
This yields
  \begin{equation}\label{keyob}
  	[H_\delta, \,\,  \partial_t+v\,\cdot\,\partial_x ]=\delta t^{\delta-1} A_{+, 1}.
  \end{equation}
  More generally, we have
\begin{equation}\label{kehigher}
\forall\ k\geq 1,\quad 	[H_\delta ^k, \,\,  \partial_t+v\,\cdot\,\partial_x ]=\delta kt^{\delta-1} A_{+, 1} H_\delta^{k-1},
\end{equation}
which can be derived from using induction on $k$. In fact, the validity of \eqref{kehigher} for $k=1$ follows from \eqref{keyob}.  Assume that $k\ge2$ and \eqref{kehigher} holds for all $l\le k-1$, now we would prove it holds true for $l=k$, by using the induction hypothesis, one has
\begin{equation*}
\begin{split}
     &[H_\delta^{k}, \,\,  \partial_t+v\,\cdot\,\partial_x ]\\
     &=H_\delta[H_\delta^{k-1}, \,\,  \partial_t+v\,\cdot\,\partial_x ]+[H_\delta, \,\,  \partial_t+v\,\cdot\,\partial_x ]H_\delta^{k-1}\\
     &=H_{\delta}\left(\delta (k-1)t^{\delta-1} A_{+, 1} H_\delta^{k-2}\right)+\delta t^{\delta-1} A_{+, 1}H_\delta^{k-1}\\
     &=\delta (k-1)t^{\delta-1} A_{+, 1} H_\delta^{k-1}+\delta t^{\delta-1} A_{+, 1}H_\delta^{k-1}=\delta kt^{\delta-1} A_{+, 1} H_\delta^{k-1}.
\end{split}
\end{equation*}

\subsection*{Formation of a basis by auxiliary vector fields}
The second advantage of this family of vector fields is that we can find a basis $H_{\delta_1}, H_{\delta_2}$ which span $\partial_{x_1}, A_{+, 1}$. In fact, take $\lambda>\frac{3}{2}$.   We define   $\delta_1$ and $\delta_2$ in term of $\lambda$ by setting
\begin{equation*}
	\delta_1 =\lambda, \quad 	\delta_2=\frac{\lambda}{2}+\frac{3}{4}.
\end{equation*}
Then
\begin{equation*}
	\delta_1>\delta_2>\frac{3}{2}.
\end{equation*}
With $\delta_1$ and  $\delta_2$ given above, we have
\begin{equation*}
	H_{\delta_1}=\frac{1}{\delta_1+1} t^{\delta_1+1}\partial_{x_1}-t^{\delta_1}A_{+, 1}   ,\quad
  H_{\delta_2}=\frac{1}{\delta_2+ 1} t^{\delta_2+1}\partial_{x_1}-t^{\delta_2}A_{+, 1}.
\end{equation*}
Denote
$$T_{1}=\frac{(\delta_2+ 1)(\delta_1+1)}{\delta_2-\delta_1}  H_{\delta_1} \quad T_{2}=-\frac{(\delta_2+ 1)(\delta_1+1)}{\delta_2-\delta_1} t^{\delta_1-\delta_2}H_{\delta_2},$$
$$T_{3=}\frac{\delta_1+ 1}{\delta_2-\delta_1} H_{\delta_1} \quad T_{4}=-\frac{\delta_2+ 1}{\delta_2-\delta_1}t^{\delta_1-\delta_2}H_{\delta_2},$$
then $\partial_{x_1}$ and $A_{+, 1}$ can be generated by the linear combination of $H_{\delta_1}, H_{\delta_2}$,
\begin{equation}\label{linear combination}
\left\{
\begin{aligned}
&t^{\lambda+ 1}\partial_{x_1}=\frac{(\delta_2+ 1)(\delta_1+1)}{\delta_2-\delta_1}  H_{\delta_1}-\frac{(\delta_2+ 1)(\delta_1+1)}{\delta_2-\delta_1} t^{\delta_1-\delta_2}H_{\delta_2}=T_{1}+T_{2},\\
&t^{\lambda}A_{+, 1}=\frac{\delta_1+ 1}{\delta_2-\delta_1} H_{\delta_1}-\frac{\delta_2+ 1}{\delta_2-\delta_1}t^{\delta_1-\delta_2}H_{\delta_2}=T_{3}+T_{4}.
\end{aligned}
\right.
\end{equation}
This enables to control of the classical derivatives in terms of the directional derivatives in $H_{\delta_1} $ and $H_{\delta_2} $.

\subsection*{Idea of proof for main Theorem \ref{mainresult}}
We have that for all $\alpha_{1}, m\in\mathbb N$,
\begin{equation*}
\begin{split}
     t^{(\lambda+1)\alpha_{1}+\lambda m}\|\partial^{\alpha_{1}}_{x_{1}}A^{m}_{+, 1}f(t)\|_{H^{2}_{x}L^{2}_{v}}=\|(T_{1}+T_{2})^{\alpha_{1}}(T_{3}+T_{4})^{m}f(t)\|_{H^{2}_{x}L^{2}_{v}}.
\end{split}
\end{equation*}
Since $T_j, j=1, \cdots, 4$ are the vector fields with coefficients depend only on $t$, we have that for all $t>0$
\begin{equation}\label{xA}
\begin{split}
     &\|(T_{1}+T_{2})^{\alpha_{1}}(T_{3}+T_{4})^{m}f(t)\|_{H^{2}_{x}L^{2}_{v}}
     \le\left|\frac{(\delta_2+ 1)(\delta_1+1)}{\delta_2-\delta_1}\right|^{\alpha_{1}+m}\\
     &\qquad\quad\times\sum_{j=0}^{\alpha_{1}}\sum_{k=0}^{m}C_{\alpha_{1}}^{j}C^{k}_{m}t^{(\delta_{1}-\delta_{2})(\alpha_{1}+m-j-k)}\|H^{j+k}_{\delta_{1}}H^{\alpha_{1}+m-j-k}_{\delta_{2}}f(t)\|_{H^{2}_{x}L^{2}_{v}}.
\end{split}
\end{equation}
So that to finish the proof of main Theorem \ref{mainresult}, we just need to prove that there exists a constant $A>0$ such that for any $0<t\le T$ and any $m, n\in\mathbb N$,
    \begin{equation}\label{Hmn}
   \|H^{m}_{\delta_{1}}H^{n}_{\delta_{2}}f(t)\|_{H^2_xL^{2}_v}\le A^{m+n-\frac12}(m-2)!(n-2)!.
    \end{equation}
    Plugging \eqref{Hmn} back into \eqref{xA}, then
    \begin{equation*}
\begin{split}
     &\|(T_{1}+T_{2})^{\alpha_{1}}(T_{3}+T_{4})^{m}f(t)\|_{H^{2}_{x}L^{2}_{v}}\\
     &\le\left(2A(T+1)^{\delta_{1}-\delta_{2}}\frac{(\delta_2+ 1)(\delta_1+1)}{\delta_2-\delta_1}\right)^{\alpha_{1}+m}(\alpha_{1}+m)!.
\end{split}
\end{equation*}
Repeat the same computation for the vector fields $H_\delta$ defined with $x_2, A_{+, 2}$ or $x_3, A_{+, 3}$, we can finally obtain \eqref{alpha1} for $0<t\le T=1$, then we repeat the same proof on 
  $t\in [k, k+1]$ for $k\ge 1$ to finish the proof of Theorem \ref{mainresult}. The rest of this paper is to prove \eqref{Hmn}.

\section{Commutators of Landau operators with vector fields}\label{s3}

Now, we consider the commutator's estimate of the Landau operator. By induction, we can obtain the following Leibniz-type formula.
\begin{lemma}\label{lemma3-1}
For $F, G\in H^{\infty}(\mathbb T^{3}_{x}; \mathcal S(\mathbb R^{3}_{v}))$ and $m\ge1$, we have
$$H^{m}_{\delta}\left(a_{jk}*(\sqrt\mu F)G\right)=\sum_{l=0}^{m}C_{m}^{l}\left(a_{jk}*(\sqrt\mu H_{\delta}^{l}F)H_{\delta}^{m-l}G\right).$$
\end{lemma}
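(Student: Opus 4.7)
The plan is to establish the formula by induction on $m$, with essentially all of the substance concentrated in the base case $m=1$; once that is in hand, the inductive step is formally identical to the derivation of the classical Leibniz rule. The first point to notice is that the operator $H_\delta = \frac{1}{\delta+1}t^{\delta+1}\partial_{x_1} - t^\delta A_{+,1}$ is \emph{not} a derivation, because of the multiplication $-t^\delta\frac{v_1}{2}$ hidden inside $A_{+,1}$. A direct computation on a product $PQ$ of two functions of $(x,v)$ gives
\begin{equation*}
H_\delta(PQ)=(H_\delta P)Q+P(H_\delta Q)+t^\delta\frac{v_1}{2}PQ,
\end{equation*}
so the naive product rule carries an extra multiplicative correction that must be absorbed somewhere.

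Next I would analyze how $H_\delta$ moves across the convolution $a_{jk}*(\sqrt\mu F)$. The spatial part commutes trivially with a convolution in $v$: $\partial_{x_1}(a_{jk}*(\sqrt\mu F))=a_{jk}*(\sqrt\mu\,\partial_{x_1}F)$. For the velocity part, the identities $\partial_{v_k}\sqrt\mu=-\frac{v_k}{2}\sqrt\mu$ and hence $\sqrt\mu\,A_{+,k}F=-\partial_{v_k}(\sqrt\mu F)$ allow me to switch $\partial_{v_1}$ onto $a_{jk}(v-v_*)$ and then integrate by parts in $v_*$ inside the convolution integral, producing
\begin{equation*}
A_{+,1}\bigl(a_{jk}*(\sqrt\mu F)\bigr)=\frac{v_1}{2}\bigl(a_{jk}*(\sqrt\mu F)\bigr)+a_{jk}*(\sqrt\mu\,A_{+,1}F).
\end{equation*}
Combining the two parts gives the key transport identity
\begin{equation*}
H_\delta\bigl(a_{jk}*(\sqrt\mu F)\bigr)=a_{jk}*(\sqrt\mu\,H_\delta F)-t^\delta\frac{v_1}{2}\bigl(a_{jk}*(\sqrt\mu F)\bigr).
\end{equation*}
Taking $P=a_{jk}*(\sqrt\mu F)$ and $Q=G$ in the generalized product rule above, the term $-t^\delta\frac{v_1}{2}PQ$ produced by $H_\delta P$ cancels exactly against the correction $+t^\delta\frac{v_1}{2}PQ$ of the product rule. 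This cancellation, which is the heart of the argument, delivers the case $m=1$:
\begin{equation*}
H_\delta\bigl(a_{jk}*(\sqrt\mu F)\,G\bigr)=a_{jk}*(\sqrt\mu\,H_\delta F)\,G+\bigl(a_{jk}*(\sqrt\mu F)\bigr)H_\delta G.
\end{equation*}

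For the inductive step, assuming the stated formula at level $m$, I would apply $H_\delta$ to both sides, invoke the base case on each summand with $F$ replaced by $H_\delta^l F$ and $G$ replaced by $H_\delta^{m-l}G$, re-index one of the two resulting sums by $l\mapsto l+1$, and collect terms using Pascal's identity $C_{m}^{l-1}+C_{m}^{l}=C_{m+1}^{l}$ to recover the formula at level $m+1$. The only real obstacle is the cancellation in the base case: once one sees that the $\frac{v_1}{2}$ produced by moving $A_{+,1}$ across the convolution is precisely what compensates for $H_\delta$ failing to be a derivation, everything else is bookkeeping.
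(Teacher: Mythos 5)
Your proposal is correct and follows essentially the same route as the paper: induction on $m$ with the whole content in the base case, the inductive step being Pascal's identity. The only difference is that you derive in detail the identity $A_{+,1}\bigl(a_{jk}*(\sqrt\mu F)\,G\bigr)=\bigl(a_{jk}*(\sqrt\mu A_{+,1}F)\bigr)G+\bigl(a_{jk}*(\sqrt\mu F)\bigr)A_{+,1}G$ (via the cancellation between the non-derivation correction $\tfrac{v_1}{2}$ of $A_{+,1}$ and the boundary term from integrating by parts inside the convolution), which the paper simply asserts; your computation is a correct justification of that step.
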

\begin{proof}
We would prove it by induction on $m$. For $m=1$, we have
\begin{equation*}
\begin{split}
     H_{\delta}\left(a_{jk}*(\sqrt\mu F)G\right)
     &=\frac{1}{\delta+1}t^{\delta+1}a_{jk}*(\sqrt\mu\partial_{x_{1}} F)G\\
     &+\frac{1}{\delta+1}t^{\delta+1}a_{jk}*(\sqrt\mu F)\partial_{x_{1}} G
     -t^\delta A_{+, 1}\left(a_{jk}*(\sqrt\mu F)G\right)\\
     &=a_{jk}*(\sqrt\mu H_{\delta}F)G+a_{jk}*(\sqrt\mu F)H_{\delta}G,
\end{split}
\end{equation*}
here we use
$$
 A_{+, 1}\left(a_{jk}*(\sqrt\mu F)G\right)=a_{jk}*(\sqrt\mu A_{+, 1}F)G+a_{jk}*(\sqrt\mu F)A_{+, 1}G.
$$
Assume that $m\ge2$ and for all $p\le m-1$
$$H^{p}_{\delta}\left(a_{jk}*(\sqrt\mu F)G\right)=\sum_{l=0}^{p}C_{p}^{l}\left(a_{jk}*(\sqrt\mu H_{\delta}^{l}F)H_{\delta}^{p-l}G\right).$$
Now we consider the case of $p=m$, using the induction hypothesis, it follows that
\begin{equation*}
\begin{split}
     &H_{\delta}^{m}\left(a_{jk}*(\sqrt\mu F)G\right)=H_{\delta}\left(H_{\delta}^{m-1}\left(a_{jk}*(\sqrt\mu F)G\right)\right)\\
     &=\sum_{l=0}^{m-1}C_{m-1}^{l}\left(\left(a_{jk}*(\sqrt\mu H_{\delta}^{l+1}F)H_{\delta}^{m-1-l}G\right)+\left(a_{jk}*(\sqrt\mu H_{\delta}^{l}F)H_{\delta}^{m-l}G\right)\right)\\
     &=\sum_{l=0}^{m}C_{m}^{l}\left(a_{jk}*(\sqrt\mu H_{\delta}^{l}F)H_{\delta}^{m-l}G\right),
\end{split}
\end{equation*}
here we use the fact $C_{m-1}^{l-1}+C_{m-1}^{l}=C_{m}^{l}$.
\end{proof}

Then, we have the following commutator's estimate of the Landau operator.
\begin{prop}\label{H-m-Gamma}
For any $m, n\in\mathbb N$, let $f, g, h\in H^{\infty}(\mathbb T^{3}_{x}; \mathcal S(\mathbb R^{3}_{v}))$ and $\gamma\ge 0$, then for all $\delta_{1}, \delta_{2}>\frac32$
\begin{equation*}
\begin{split}
     &\left|\left(H^{m}_{\delta_{1}}H^{n}_{\delta_{2}}\Gamma(f, g),\ H^{m}_{\delta_{1}}H^{n}_{\delta_{2}}h\right)_{H^{2}_{x}L^{2}_{v}}\right|\\
     &\le C_{2}\sum_{l=0}^{m}\sum_{p=0}^{n}C_{m}^{l}C_{n}^{p}\|H^{l}_{\delta_{1}}H^{p}_{\delta_{2}}f\|_{H^{2}_{x}L^{2}_{v}}|||H^{m-l}_{\delta_{1}}H^{n-p}_{\delta_{2}}g|||_{H^{2}_{x}, \sigma}|||H^{m}_{\delta_{1}}H^{n}_{\delta_{2}}h|||_{H^{2}_{x}, \sigma},
\end{split}
\end{equation*}
where $C_{2}>0$ is independent of $m$ and $n$.
\end{prop}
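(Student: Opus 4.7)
\medskip

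\noindent\textbf{Proof proposal.} The strategy is to reduce the left-hand side to a sum of ordinary $\Gamma$-pairings by expanding $H^{m}_{\delta_{1}}H^{n}_{\delta_{2}}\Gamma(f,g)$ with a two-parameter Leibniz rule, and then to invoke the trilinear estimate \eqref{Gamma-1} on each term. The target identity to establish is
\begin{equation*}
H^{m}_{\delta_{1}}H^{n}_{\delta_{2}}\Gamma(f,g)=\sum_{l=0}^{m}\sum_{p=0}^{n}C_{m}^{l}C_{n}^{p}\,\Gamma\bigl(H^{l}_{\delta_{1}}H^{p}_{\delta_{2}}f,\ H^{m-l}_{\delta_{1}}H^{n-p}_{\delta_{2}}g\bigr),
\end{equation*}
after which \eqref{Gamma-1} applied termwise with $F=H^{l}_{\delta_{1}}H^{p}_{\delta_{2}}f$, $G=H^{m-l}_{\delta_{1}}H^{n-p}_{\delta_{2}}g$ and $H=H^{m}_{\delta_{1}}H^{n}_{\delta_{2}}h$ yields the claim with $C_{2}$ exactly the constant in \eqref{Gamma-1}, independent of $m,n$.

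\medskip

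First I would record two commutation facts. Since $H_{\delta}=\frac{1}{\delta+1}t^{\delta+1}\partial_{x_{1}}-t^{\delta}A_{+,1}$ only involves $\partial_{x_{1}}$ and $A_{+,1}$, and these in turn commute with every $A_{+,j}$ ($\partial_{x_{1}}$ acts on $x$ only, and the $A_{+,j}$'s pairwise commute since $[\frac{1}{2}v_{i}-\partial_{v_{i}},\frac{1}{2}v_{j}-\partial_{v_{j}}]=0$), we obtain $[H_{\delta},A_{+,j}]=0$ for all $j$ and all $\delta>\tfrac32$; in particular $H_{\delta_{1}}$ and $H_{\delta_{2}}$ commute with each other as well. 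This lets me pull $H^{m}_{\delta_{1}}H^{n}_{\delta_{2}}$ under the outer $A_{+,j}$ in both pieces of the representation $\Gamma(f,g)=\sum_{j,k}A_{+,j}\bigl((a_{jk}*\sqrt{\mu}f)A_{+,k}g\bigr)-\sum_{j,k}A_{+,j}\bigl((a_{jk}*\sqrt{\mu}A_{+,k}f)g\bigr)$ from Lemma \ref{representations}.

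\medskip

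Next I would iterate Lemma \ref{lemma3-1}. Applying that lemma with $\delta=\delta_{2}$ to the product $(a_{jk}*\sqrt{\mu}F)G$ and then with $\delta=\delta_{1}$ to each of the resulting products (using that $[H_{\delta_{1}},H_{\delta_{2}}]=0$ to handle the convolution factor) produces
\begin{equation*}
H^{m}_{\delta_{1}}H^{n}_{\delta_{2}}\bigl((a_{jk}*\sqrt{\mu}F)G\bigr)=\sum_{l=0}^{m}\sum_{p=0}^{n}C_{m}^{l}C_{n}^{p}\bigl(a_{jk}*\sqrt{\mu}H^{l}_{\delta_{1}}H^{p}_{\delta_{2}}F\bigr)H^{m-l}_{\delta_{1}}H^{n-p}_{\delta_{2}}G.
\end{equation*}
I would apply this identity with $(F,G)=(f,A_{+,k}g)$ for the first half of $\Gamma$ and with $(F,G)=(A_{+,k}f,g)$ for the second half, using the commutation $H^{l}_{\delta_{1}}H^{p}_{\delta_{2}}A_{+,k}=A_{+,k}H^{l}_{\delta_{1}}H^{p}_{\delta_{2}}$ to bring the $A_{+,k}$ back outside. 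The two resulting double sums have the same indexing $(l,p)$, so for each fixed $(l,p)$ the inner $(j,k)$-sum is precisely $\Gamma(H^{l}_{\delta_{1}}H^{p}_{\delta_{2}}f,\,H^{m-l}_{\delta_{1}}H^{n-p}_{\delta_{2}}g)$ by Lemma \ref{representations} again. This gives the key identity displayed above; applying \eqref{Gamma-1} and the triangle inequality completes the estimate.

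\medskip

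The main obstacle I anticipate is purely bookkeeping rather than analytic: one must verify that after commuting $A_{+,k}$ back through $H^{l}_{\delta_{1}}H^{p}_{\delta_{2}}$ in the second half, the two double sums align cleanly with the same binomial coefficients $C_{m}^{l}C_{n}^{p}$, so that the algebraic difference reassembles into the original $\Gamma(\cdot,\cdot)$ structure term by term. The iteration of Lemma \ref{lemma3-1} for the two distinct vector fields $H_{\delta_{1}}$, $H_{\delta_{2}}$ should be presented as a short induction (or, equivalently, a routine two-variable Leibniz) to make this rearrangement transparent. Once the identity is in hand, the bound is immediate from \eqref{Gamma-1} and contains no further $m,n$-dependent constants.
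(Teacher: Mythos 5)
Your proposal is correct and follows essentially the same route as the paper: the authors likewise derive the identity $H^{m}_{\delta_{1}}H^{n}_{\delta_{2}}\Gamma(f,g)=\sum_{l,p}C_{m}^{l}C_{n}^{p}\Gamma(H^{l}_{\delta_{1}}H^{p}_{\delta_{2}}f,H^{m-l}_{\delta_{1}}H^{n-p}_{\delta_{2}}g)$ from Lemma \ref{representations}, the Leibniz formula of Lemma \ref{lemma3-1}, and the commutation $[H_{\delta_{j}},A_{+,k}]=0$, and then conclude by applying \eqref{Gamma-1} termwise. Your extra care about iterating Lemma \ref{lemma3-1} for the two fields and realigning the binomial coefficients is exactly the bookkeeping the paper leaves implicit.
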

\begin{proof}
From the representation for $\Gamma$ in Lemma \ref{representations} and Lemma \ref{lemma3-1}, we have
\begin{equation*}
\begin{split}
     &H^{m}_{\delta_{1}}H^{n}_{\delta_{2}}\Gamma(f, g)
     =\sum_{j, k=1}^{3}\sum_{l=0}^{m}\sum_{p=0}^{n}C_{m}^{l}C^{p}_{n}A_{+, j}\left((a_{jk}*(\sqrt\mu H^{l}_{\delta_{1}}H^{p}_{\delta_{2}}f))H^{m-l}_{\delta_{1}}H^{n-p}_{\delta_{2}}A_{+, k}g\right)\\
     &\qquad\quad-\sum_{j, k=1}^{3}\sum_{l=0}^{m}\sum_{p=0}^{n}C_{m}^{l}C_{n}^{p}A_{+, j}\left((a_{jk}*(\sqrt\mu H^{l}_{\delta_{1}}H^{p}_{\delta_{2}}A_{+, k}f))H^{m-l}_{\delta_{1}}H^{n-p}_{\delta_{2}}g\right),
\end{split}
\end{equation*}
noticing that for all $1\le k\le3$ and $j=1, 2$, $[H_{\delta_{j}}, A_{+, k}]=0$, one has
\begin{equation}\label{3-9}
\begin{split}
     H^{m}_{\delta_{1}}H^{n}_{\delta_{2}}\Gamma(f, g)&=\sum_{l=0}^{m}\sum_{p=0}^{n}C_{m}^{l}C_{n}^{p}\Gamma(H^{l}_{\delta_{1}}H^{p}_{\delta_{2}}f, \ H^{m-l}_{\delta_{1}}H^{n-p}_{\delta_{2}}g).
\end{split}
\end{equation}
Then from \eqref{Gamma-1}, we can get that
\begin{equation*}
\begin{split}
     &\left|\left(H^{m}_{\delta_{1}}H^{n}_{\delta_{2}}\Gamma(f, g),\ H^{m}_{\delta_{1}}H^{n}_{\delta_{2}}h\right)_{H^{2}_{x}L^{2}_{v}}\right|\\
     &\le C_{2}\sum_{l=0}^{m}\sum_{p=0}^{n}C_{m}^{l}C_{n}^{p}\|H^{l}_{\delta_{1}}H^{p}_{\delta_{2}}f\|_{H^{2}_{x}L^{2}_{v}}|||H^{m-l}_{\delta_{1}}H^{n-p}_{\delta_{2}}g|||_{H^{2}_{x}, \sigma}|||H^{m}_{\delta_{1}}H^{n}_{\delta_{2}}h|||_{H^{2}_{x}, \sigma}.
\end{split}
\end{equation*}
\end{proof}

Notice that $\mathcal L_{2}f=-\Gamma(f, \sqrt\mu)$, then we have the following result.
\begin{cor}\label{corollary1}
     For any $m, n\in\mathbb N$, let $f, h\in H^{\infty}(\mathbb T^{3}_{x}; \mathcal S(\mathbb R^{3}_{v}))$ and $\gamma\ge 0$, then for all $\delta_{1}, \delta_{2}>\frac32$
\begin{equation*}
\begin{split}
     &\left|\left(H^{m}_{\delta_{1}}H^{n}_{\delta_{2}}\mathcal L_{2}f,\ H^{m}_{\delta_{1}}H^{n}_{\delta_{2}}h\right)_{H^{2}_{x}L^{2}_{v}}\right|
     \le C_{3}\sum_{l=0}^{m}\sum_{p=0}^{n}C_{m}^{l}C^{p}_{n}\left(\sqrt{C_{0}}t^{\delta_{1}}\right)^{m-l}\left(\sqrt{C_{0}}t^{\delta_{2}}\right)^{n-p}\\
     &\qquad\quad\times\sqrt{(m-l+n-p)!}\|H^{l}_{\delta_{1}}H^{p}_{\delta_{2}}f\|_{H^{2}_{x}L^{2}_{v}}|||H^{m}_{\delta_{1}}H^{n}_{\delta_{2}}h|||_{H^{2}_{x}, \sigma},
\end{split}
\end{equation*}
with $C_{0}>0$ and $C_{3}>0$ independent of $m$ and $n$.
\end{cor}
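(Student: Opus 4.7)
The plan is to reduce Corollary \ref{corollary1} to Proposition \ref{H-m-Gamma} via the identity $\mathcal L_2 f = -\Gamma(f, \sqrt\mu)$, and then to exploit that the second slot is the $x$-independent function $\sqrt\mu$, so that the vector fields $H_{\delta_1}$ and $H_{\delta_2}$ act on $\sqrt\mu$ in a totally explicit way. First I would write
\begin{equation*}
H^{m}_{\delta_{1}}H^{n}_{\delta_{2}}\mathcal L_{2}f = -H^{m}_{\delta_{1}}H^{n}_{\delta_{2}}\Gamma(f,\sqrt\mu)
\end{equation*}
and apply Proposition \ref{H-m-Gamma} with $g=\sqrt\mu$ and the same $h$, obtaining
\begin{equation*}
\left|\left(H^{m}_{\delta_{1}}H^{n}_{\delta_{2}}\mathcal L_{2}f,\,H^{m}_{\delta_{1}}H^{n}_{\delta_{2}}h\right)_{H^{2}_{x}L^{2}_{v}}\right|
\le C_{2}\sum_{l=0}^{m}\sum_{p=0}^{n}C_{m}^{l}C_{n}^{p}\,
\|H^{l}_{\delta_{1}}H^{p}_{\delta_{2}}f\|_{H^{2}_{x}L^{2}_{v}}\,
|||H^{m-l}_{\delta_{1}}H^{n-p}_{\delta_{2}}\sqrt\mu|||_{H^{2}_{x},\sigma}\,
|||H^{m}_{\delta_{1}}H^{n}_{\delta_{2}}h|||_{H^{2}_{x},\sigma}.
\end{equation*}

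Next I would compute the action of the $H_\delta$'s on $\sqrt\mu$ exactly. Since $\partial_{x_1}\sqrt\mu=0$, the definition of $H_\delta$ collapses to $H_\delta \sqrt\mu = -t^{\delta} A_{+,1}\sqrt\mu$, and iterating (the result is again a function of $v$ only, so only the $A_{+,1}$ part survives at every step) yields
\begin{equation*}
H^{m-l}_{\delta_{1}}H^{n-p}_{\delta_{2}}\sqrt\mu = (-1)^{(m-l)+(n-p)}\,t^{\delta_{1}(m-l)+\delta_{2}(n-p)}\,A_{+,1}^{\,m-l+n-p}\sqrt\mu.
\end{equation*}
Because this function is independent of $x$, the triple-bar $H^{2}_{x},\sigma$ norm is just $|\mathbb T^{3}|^{1/2}$ times the purely velocity norm $|||\cdot|||_\sigma$, so the only remaining analytical task is a bound of the shape
\begin{equation*}
|||A_{+,1}^{N}\sqrt\mu|||_{\sigma} \le \sqrt{C_{0}}^{\,N}\sqrt{N!}\qquad\text{for all }N\in\mathbb N,
\end{equation*}
with a constant $C_0$ independent of $N$.

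This Gaussian/Hermite estimate is the real content, and I expect it to be the main obstacle (though it is standard). The idea is that $A_{+,1}^{N}\sqrt\mu=P_{N}(v_{1})\sqrt\mu$ with $P_{N}$ a Hermite polynomial of degree $N$; from $A_{-,1}\sqrt\mu=0$ and $[A_{-,1},A_{+,1}]=1$ one gets the orthogonality $\|A_{+,1}^{N}\sqrt\mu\|_{L^{2}_{v}}^{2}=N!$. Since $\bar a_{jk}$ grows only like $\langle v\rangle^{\gamma+2}$ and $A_{+,1}^{N}\sqrt\mu$ retains the full Gaussian decay $e^{-|v|^{2}/4}$, the polynomial weights inside the $|||\cdot|||_\sigma$ norm only cost a $v$-universal (i.e.\ $N$-independent) constant after absorbing them into a slightly weaker Gaussian. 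This gives the required $\sqrt{C_{0}}^{\,N}\sqrt{N!}$ bound; the same argument works uniformly in $x$, producing the extra harmless factor $|\mathbb T^{3}|^{1/2}$.

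Finally, I would plug the two previous displays back into the estimate coming from Proposition \ref{H-m-Gamma}, collect the $t^{\delta_{1}(m-l)}t^{\delta_{2}(n-p)}$ and $\sqrt{C_{0}}^{\,m-l+n-p}\sqrt{(m-l+n-p)!}$ factors into the shape announced in the statement, and set $C_{3}=C_{2}\,|\mathbb T^{3}|^{1/2}$ to conclude. The only place where care is needed is the Hermite bound on $|||A_{+,1}^{N}\sqrt\mu|||_\sigma$; everything else is algebraic bookkeeping built on Proposition \ref{H-m-Gamma}.
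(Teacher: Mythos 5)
Your proposal is correct and follows essentially the same route as the paper: reduce to the $\Gamma$--estimate via $\mathcal L_2f=-\Gamma(f,\sqrt\mu)$ (the paper applies Lemma \ref{Gamma} pointwise in $x$ rather than quoting Proposition \ref{H-m-Gamma} wholesale, but this is the same computation), observe $H^{m-l}_{\delta_1}H^{n-p}_{\delta_2}\sqrt\mu=(-t^{\delta_1}A_{+,1})^{m-l}(-t^{\delta_2}A_{+,1})^{n-p}\sqrt\mu$, and control $|||A_{+,1}^{N}\sqrt\mu|||_\sigma$ by $\sqrt{C_0^N N!}$. The only difference is that for this last Hermite-type bound the paper simply majorizes by $|||\nabla_{\mathcal H_+}^{N}\sqrt\mu|||_\sigma$ and cites Corollary 3.4 of \cite{LX1}, whereas you sketch a direct proof; your sketch (orthogonality giving $\|A_{+,1}^N\sqrt\mu\|_{L^2}^2=N!$ plus absorbing the fixed-degree weight $\langle v\rangle^{\gamma+2}$ at the cost of a $C^N$ factor) is the standard argument behind that cited result.
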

\begin{proof}
     Since $\mathcal L_{2}f=-\Gamma(f, \sqrt\mu)$, from \eqref{3-9}, we have
\begin{equation*}
\begin{split}
     &(H^{m}_{\delta_{1}}H^{n}_{\delta_{2}}\mathcal L_{2}f,\ H^{m}_{\delta_{1}}H^{n}_{\delta_{2}}h)_{H^{2}_{x}L^{2}_{v}}\\
     &=-\sum_{|\alpha|\le2}\sum_{l=0}^{m}\sum_{p=0}^{n}C_{m}^{l}C^{p}_{n}\int_{\mathbb T^{3}_{x}}(\Gamma(H^{l}_{\delta_{1}}H^{n}_{\delta_{2}}\partial^{\alpha}_{x}f, H^{m-l}_{\delta_{1}}H^{n-p}_{\delta_{2}}\sqrt\mu),\ H^{m}_{\delta_{1}}H^{n}_{\delta_{2}}\partial^{\alpha}_{x}h)_{L^{2}_{v}}dx,
\end{split}
\end{equation*}
from Lemma \ref{Gamma}, we have
\begin{equation*}
\begin{split}
     &\left|\int_{\mathbb T^{3}_{x}}(\Gamma(H^{l}_{\delta_{1}}H^{n}_{\delta_{2}}\partial^{\alpha}_{x}f, H^{m-l}_{\delta_{1}}H^{n-p}_{\delta_{2}}\sqrt\mu),\ H^{m}_{\delta_{1}}H^{n}_{\delta_{2}}\partial^{\alpha}_{x}h)_{L^{2}_{v}}dx\right|\\
     &\lesssim\int_{\mathbb T^{3}_{x}}|\|H^{l}_{\delta_{1}}H^{p}_{\delta_{2}}\partial^{\alpha}_{x}f(x, \cdot)\|_{L^{2}_{v}}|||H^{m-l}_{\delta_{1}}H^{n-p}_{\delta_{2}}\sqrt\mu|||_{\sigma}|||H^{m}_{\delta_{1}}H^{n}_{\delta_{2}}\partial^{\alpha}_{x}h(x, \cdot)|||_{\sigma}dx.
\end{split}
\end{equation*}
Notice that
     $$H^{m-l}_{\delta_{1}}H^{n-p}_{\delta_{2}}\sqrt\mu=(-t^{\delta_{1}}A_{+, 1})^{m-l}(-t^{\delta_{2}}A_{+, 1})^{n-p}\sqrt\mu,$$
then we have
\begin{equation*}
\begin{split}
     |||H^{m-l}_{\delta_{1}}H^{n-p}_{\delta_{2}}\sqrt\mu|||^{2}_{\sigma}
     &=t^{2\delta_{1}(m-l)+2\delta_{2}(n-p)}|||A_{+, 1}^{m-l+n-p}\sqrt\mu|||^{2}_{\sigma}\\
     &\le t^{2\delta_{1}(m-l)+2\delta_{2}(n-p)}|||\nabla_{\mathcal H_{+}}^{m-l+n-p}\sqrt\mu|||^{2}_{\sigma},
\end{split}
\end{equation*}
from the Corollary 3.4 of~\cite{LX1}, there exists a constant $C_{0}>0$ such that
\begin{equation*}
\begin{split}
     &|||H^{m-l}_{\delta_{1}}H^{n-p}_{\delta_{2}}\sqrt\mu|||^{2}_{\sigma}\le(C_{0}t^{2\delta_{1}})^{m-l}(C_{0}t^{2\delta_{2}})^{n-p}(m-l+n-p)!.
\end{split}
\end{equation*}
Therefore, we can obtain that
\begin{equation*}
\begin{split}
     &|(H^{m}_{\delta_{1}}H^{n}_{\delta_{2}}\mathcal L_{2}f,\ H^{m}_{\delta_{1}}H^{n}_{\delta_{2}}h)_{H^{2}_{x}L^{2}_{v}}|\lesssim\sum_{|\alpha|\le2}\sum_{l=0}^{m}\sum_{p=0}^{n}C_{m}^{l}C_{n}^{p}(\sqrt{C_{0}}t^{\delta_{1}})^{m-l}(\sqrt{C_{0}}t^{\delta_{2}})^{n-p}\\
     &\quad\times\sqrt{(m-l+n-p)!}\int_{\mathbb T^{3}_{x}}\|H^{l}_{\delta_{1}}H^{p}_{\delta_{2}}\partial^{\alpha}_{x}f(x, \cdot)\|_{L^{2}(\mathbb R^{3})}|||H^{m}_{\delta_{1}}H^{n}_{\delta_{2}}\partial^{\alpha}_{x}h(x, \cdot)|||_{\sigma}dx.
\end{split}
\end{equation*}
By using Cauchy-Schwarz inequality, there exists a constant $C_{3}>0$, independent of $m$ and $n$, such that
\begin{equation*}
\begin{split}
     &|(H^{m}_{\delta_{1}}H^{n}_{\delta_{2}}\mathcal L_{2}f,\ H^{m}_{\delta_{1}}H^{n}_{\delta_{2}}h)_{H^{2}_{x}L^{2}_{v}}|
     \le C_{3}\sum_{l=0}^{m}\sum_{p=0}^{n}C_{m}^{l}C^{p}_{n}\left(\sqrt{C_{0}}t^{\delta_{1}}\right)^{m-l}\left(\sqrt{C_{0}}t^{\delta_{2}}\right)^{n-p}\\
     &\qquad\quad\times\sqrt{(m-l+n-p)!}\|H^{l}_{\delta_{1}}H^{p}_{\delta_{2}}f\|_{H^{2}_{x}L^{2}_{v}}|||H^{m}_{\delta_{1}}H^{n}_{\delta_{2}}h|||_{H^{2}_{x}, \sigma}.
\end{split}
\end{equation*}
\end{proof}

For the operator $\mathcal L_{1}$. We first review that
\begin{lemma}~{\em (\cite{LX1})} 
     For any $s>-3$, we have for $\delta>0$
     \begin{equation}\label{e delta}
          \int_{\mathbb R^{3}}|v-w|^{s}e^{-\delta|w|^{2}}dw\lesssim\langle v\rangle^{s}.
     \end{equation}
\end{lemma}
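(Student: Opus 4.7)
The plan is to prove the bound by partitioning $\mathbb{R}^3_w$ into regions on which either $|v-w|$ is comparable to $|v|$ (so the polynomial factor is controlled pointwise) or $|w|$ is so large that the Gaussian weight $e^{-\delta|w|^2}$ absorbs everything else. First I would dispose of the bounded regime $|v|\le 1$ separately: there $\langle v\rangle \sim 1$, so it suffices to show the integral is finite. Splitting into $\{|v-w|\le 1\}$ and its complement, the singular part is controlled by $\int_{|u|\le 1}|u|^s\,du$, which converges precisely because $s>-3$, while on the complement $|v-w|^s$ is dominated by $1+|w|^s$ (the positive-$s$ case) or by $1$ (the negative-$s$ case), and the Gaussian makes the integral finite.

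For the main regime $|v|\ge 1$, I would decompose
\[
\Omega_1=\{|w|\le |v|/2\},\quad \Omega_2=\{|v|/2\le |w|\le 2|v|\},\quad \Omega_3=\{|w|\ge 2|v|\}.
\]
On $\Omega_1$ the triangle inequality gives $|v|/2\le |v-w|\le 3|v|/2$, so $|v-w|^s\lesssim |v|^s$ for either sign of $s$, and the remaining Gaussian integral is a harmless constant, yielding a bound $\lesssim |v|^s\lesssim \langle v\rangle^s$. On $\Omega_3$ one has $|v-w|\ge |w|/2\ge |v|$, so $|v-w|^s\lesssim |w|^{|s|}+|w|^{-|s|}$ uniformly, and the Gaussian weight produces a bound of the form $e^{-\delta|v|^2}$, which decays faster than any polynomial and is therefore $\lesssim \langle v\rangle^s$.

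The main obstacle is $\Omega_2$, since there $|v-w|$ can vanish and the integrand $|v-w|^s$ is genuinely singular when $-3<s<0$. This is exactly where the hypothesis $s>-3$ enters. I would use the pointwise bound $e^{-\delta|w|^2}\le e^{-\delta|v|^2/4}$ on $\Omega_2$, pull this factor outside, and change variables $u=v-w$ to estimate
\[
\int_{\Omega_2}|v-w|^s e^{-\delta|w|^2}\,dw\;\le\; e^{-\delta|v|^2/4}\int_{|u|\le 3|v|}|u|^s\,du\;\lesssim\; e^{-\delta|v|^2/4}\,|v|^{s+3},
\]
where the inner integral converges because $s+3>0$. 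The exponential factor dominates the polynomial $|v|^{s+3}$, giving a bound $\lesssim \langle v\rangle^s$. Collecting the three pieces, together with the $|v|\le 1$ case, yields the claim.
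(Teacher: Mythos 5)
Your proof is correct and complete. Note that the paper does not actually prove this lemma — it is quoted from the reference \cite{LX1} — but your argument is the standard one for such convolution estimates: the three-region decomposition according to the relative sizes of $|w|$ and $|v|$, with the hypothesis $s>-3$ entering exactly where you use it (local integrability of $|u|^s$ near the origin in the near-diagonal region $\Omega_2$ and in the bounded case $|v|\le 1$), and it mirrors the $\Omega_1\cup\Omega_2\cup\Omega_3$ splitting that the authors themselves employ later in the proof of Proposition 3.5. All the individual estimates check out, including the absorption of the polynomial factor $|v|^{s+3}$ by $e^{-\delta|v|^2/4}$ on $\Omega_2$ and the exponential decay $e^{-c|v|^2}\lesssim\langle v\rangle^{s}$ on $\Omega_3$.
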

Then we have the following result.
\begin{prop}\label{H-m-L-1}
For any $m, n\in\mathbb N_{+}$, let $f\in H^{\infty}(\mathbb T^{3}_{x}; \mathcal S(\mathbb R^{3}_{v}))$ and $\gamma\ge 0$, then there exists a constant $C_{4}>0$, independent of $m$ and $n$, such that for all $\delta_{1}, \delta_{2}>\frac32$
\begin{equation*}
\begin{split}
     &\left(H^{m}_{\delta_{1}}H^{n}_{\delta_{2}}\mathcal L_{1}f,\ H^{m}_{\delta_{1}}H^{n}_{\delta_{2}}f\right)_{H^{2}_{x}L^{2}_{v}}\ge\frac34|||H^{m}_{\delta_{1}}H^{n}_{\delta_{2}}f|||_{H^{2}_{x}, \sigma}^{2}-C_{4}\|\langle v\rangle^{\frac\gamma2}H^{m}_{\delta_{1}}H^{n}_{\delta_{2}}f\|^{2}_{H^{2}_{x}L^{2}_{v}}\\
     &-C_{4}\sum_{l=1}^{m}C^{l}_{m}l^{\frac{[\gamma]}{2}+2}\sqrt{l!}t^{\delta_{1} l}|||H^{m-l}_{\delta_{1}}H^{n}_{\delta_{2}}f|||_{H^{2}_{x}, \sigma}|||H^{m}_{\delta_{1}}H^{n}_{\delta_{2}}f|||_{H^{2}_{x}, \sigma}\\
     &-C_{4}\sum_{l=0}^{m}C_{m}^{l}l^{\frac{[\gamma]}{2}+1}\sqrt{l!}nt^{\delta_{1}l}t^{\delta_{2}}|||H^{m-l}_{\delta_{1}}H^{n-1}_{\delta_{2}}f|||_{H^{2}_{x}, \sigma}|||H^{m}_{\delta_{1}}H^{n}_{\delta_{2}}f|||_{H^{2}_{x}, \sigma}\\
     &-C_{4}\sum_{p=1}^{n}C^{p}_{n}p^{\frac{[\gamma]}{2}+2}\sqrt{p!}t^{\delta_{2} p}|||H^{m}_{\delta_{1}}H^{n-p}_{\delta_{2}}f|||_{H^{2}_{x}, \sigma}|||H^{m}_{\delta_{1}}H^{n}_{\delta_{2}}f|||_{H^{2}_{x}, \sigma}\\
     &-C_{4}\sum_{p=0}^{n}C_{n}^{p}p^{\frac{[\gamma]}{2}+1}\sqrt{p!}mt^{\delta_{1}}t^{\delta_{2}p}|||H^{m-1}_{\delta_{1}}H^{n-p}_{\delta_{2}}f|||_{H^{2}_{x}, \sigma}|||H^{m}_{\delta_{1}}H^{n}_{\delta_{2}}f|||_{H^{2}_{x}, \sigma}\\
     &-C_{4}\sum_{l=1}^{m}\sum_{p=1}^{n}C_{n}^{p}C_{m}^{l}(p+l)^{\frac{[\gamma]}{2}+2}\sqrt{(p+l)!}t^{\delta_{1} l}t^{\delta_{2} p}|||H^{m-l}_{\delta_{1}}H^{n-p}_{\delta_{2}}f|||_{H^{2}_{x}, \sigma}|||H^{m}_{\delta_{1}}H^{n}_{\delta_{2}}f|||_{H^{2}_{x}, \sigma}.
\end{split}
\end{equation*}
\end{prop}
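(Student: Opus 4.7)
The plan is to exploit the identity $\mathcal{L}_{1}f=-\Gamma(\sqrt{\mu},f)$ together with the Leibniz-type expansion \eqref{3-9} already derived in the proof of Proposition \ref{H-m-Gamma}. Applied with $\sqrt{\mu}$ in the first slot and $f$ in the second, it yields
\begin{equation*}
H^{m}_{\delta_{1}}H^{n}_{\delta_{2}}\mathcal{L}_{1}f=\mathcal{L}_{1}\bigl(H^{m}_{\delta_{1}}H^{n}_{\delta_{2}}f\bigr)-\sum_{(l,p)\ne(0,0)}C_{m}^{l}C_{n}^{p}\,\Gamma\bigl(H^{l}_{\delta_{1}}H^{p}_{\delta_{2}}\sqrt{\mu},\,H^{m-l}_{\delta_{1}}H^{n-p}_{\delta_{2}}f\bigr).
\end{equation*}
Pairing this identity with $H^{m}_{\delta_{1}}H^{n}_{\delta_{2}}f$ in $H^{2}_{x}L^{2}_{v}$ splits the estimate into a principal coercive term and a commutator remainder.

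For the principal term, I would first prove, independently of $(m,n)$, the coercivity
\begin{equation*}
(\mathcal{L}_{1}g,g)_{H^{2}_{x}L^{2}_{v}}\ge\tfrac{3}{4}|||g|||^{2}_{H^{2}_{x},\sigma}-C\|\langle v\rangle^{\gamma/2}g\|^{2}_{H^{2}_{x}L^{2}_{v}},
\end{equation*}
and apply it to $g=H^{m}_{\delta_{1}}H^{n}_{\delta_{2}}f$. Using $A_{+,j}^{*}=A_{-,j}$ rewrites $(\mathcal{L}_{1}g,g)_{L^{2}_{v}}$ as $\sum_{j,k}\int\bar{a}_{jk}(v)\,A_{-,k}g\,A_{-,j}g\,dv$; expanding $A_{-,k}=\frac{v_{k}}{2}+\partial_{v_{k}}$ recovers the template of $|||\cdot|||_{\sigma}$, and the resulting cross terms $\int\bar{a}_{jk}v_{k}g\,\partial_{j}g\,dv$ are treated by integration by parts, with the growth $|\partial_{j}\bar{a}_{jk}|\lesssim\langle v\rangle^{\gamma}$ allowing their absorption into the weighted $L^{2}$ remainder via Cauchy--Schwarz and the ellipticity \eqref{H}. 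A small loss in that absorption accounts for the factor $\frac{3}{4}$ rather than $1$.

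For the commutator remainder the crucial observation is that $\sqrt{\mu}$ is independent of $x$ and that $H_{\delta_{1}},H_{\delta_{2}}$ commute (both are built from $\partial_{x_{1}}$ and $A_{+,1}$), so
\begin{equation*}
H^{l}_{\delta_{1}}H^{p}_{\delta_{2}}\sqrt{\mu}=(-1)^{l+p}\,t^{l\delta_{1}+p\delta_{2}}\,A_{+,1}^{\,l+p}\sqrt{\mu}.
\end{equation*}
I would then apply Lemma \ref{Gamma} in its $H^{2}_{x}$ variant \eqref{Gamma-1} to each summand, placing the $L^{2}$ norm on the $\sqrt{\mu}$ factor. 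The weighted Hermite bound
\begin{equation*}
\bigl\|\langle v\rangle^{\gamma/2}A_{+,1}^{\,k}\sqrt{\mu}\bigr\|_{L^{2}_{v}}\,\lesssim\, C_{0}^{\,k}\,k^{[\gamma]/2+c}\,\sqrt{k!},
\end{equation*}
obtained by induction on $k$ from the three-term Hermite recursion, then delivers the announced exponential-polynomial-factorial factor. The final form is obtained by partitioning the index set $\{(l,p)\ne(0,0)\}$ into the five subranges $\{p=0,l\ge1\}$, $\{p=1,l\ge0\}$, $\{l=0,p\ge1\}$, $\{l=1,p\ge0\}$ and $\{l\ge1,p\ge1\}$: in the four boundary slices the exponent of $l$ or $p$ reduces to $[\gamma]/2+1$ or $[\gamma]/2+2$ and the prefactors $m\,t^{\delta_{1}}$, $n\,t^{\delta_{2}}$ come from $C_{m}^{1}=m$, $C_{n}^{1}=n$, while the fully doubly-indexed slice carries the $(l+p)^{[\gamma]/2+2}\sqrt{(l+p)!}$ factor directly.

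The hard part will be producing the sharp weighted Hermite estimate above with the correct polynomial power $[\gamma]/2+c$: the unweighted bound $\|A_{+,1}^{k}\sqrt{\mu}\|_{L^{2}}\lesssim C^{k}\sqrt{k!}$ (already used in Corollary \ref{corollary1}) does not account for the $\langle v\rangle^{\gamma/2}$-weight, and one has to propagate $v_{1}$-multiplications through the recursion $A_{+,1}^{k+1}\sqrt{\mu}=v_{1}A_{+,1}^{\,k}\sqrt{\mu}-k\,A_{+,1}^{\,k-1}\sqrt{\mu}$ by induction on $k$, interpolating between integer powers of $\langle v\rangle$ when $\gamma\notin\mathbb{N}$. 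Everything else is careful bookkeeping of the commutator sum.
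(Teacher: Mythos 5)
Your proposal is correct, but it organizes the proof quite differently from the paper. The paper never passes through the trilinear estimate for $\mathcal L_{1}$: it unpacks $\mathcal L_{1}$ to the kernel level via Lemma \ref{representations} and Lemma \ref{lemma3-1}, uses the identity $\sqrt\mu\,H^{l}_{\delta_{1}}H^{p}_{\delta_{2}}\sqrt\mu=t^{\delta_{1}l}t^{\delta_{2}p}\partial^{l+p}_{v_{1}}\mu$ to turn the convolution factors into $\partial^{l+p}_{v_{1}}\bar a_{jk}$, and then invokes the pointwise bound $|\partial^{l}_{v_{1}}\bar a_{jk}|\lesssim l^{[\gamma]/2+1}\sqrt{l!}\langle v\rangle^{\gamma}$ together with a region decomposition $\Omega_{1}\cup\Omega_{2}\cup\Omega_{3}$; the explicit tracking of $[H^{m-l}_{\delta_1}H^{n-p}_{\delta_2},A_{-,k}]$ is what generates the second and fourth error sums of the statement. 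You instead stay at the level of $\Gamma$, isolate the principal term $\mathcal L_{1}(H^{m}_{\delta_1}H^{n}_{\delta_2}f)$ (treating it by the same coercivity argument as the paper's $I_{1}$, $\tilde I_{1}$), and estimate the whole remainder by \eqref{Gamma-1} with $\|H^{l}_{\delta_{1}}H^{p}_{\delta_{2}}\sqrt\mu\|_{L^{2}_{v}}=t^{\delta_{1}l+\delta_{2}p}\sqrt{(l+p)!}$. This yields a stronger conclusion (no polynomial factors, and no analogues of the second and fourth sums, which are harmless since they only enlarge the subtracted side), at the price of leaning entirely on Lemma \ref{Gamma} as a black box, whereas the paper's kernel-level route is self-contained. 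Two corrections to your write-up: the step you flag as "the hard part" — a weighted Hermite bound $\|\langle v\rangle^{\gamma/2}A_{+,1}^{k}\sqrt\mu\|_{L^{2}}$ — is not needed at all, because \eqref{Gamma-1} places only the unweighted $L^{2}$ norm on the first argument, and $\|A_{+,1}^{k}\sqrt\mu\|_{L^{2}}=\sqrt{k!}$ exactly since $\sqrt\mu$ is the normalized ground state; likewise the factors $(l+p)^{[\gamma]/2+2}\ge1$ need not be reproduced, as they sit on the error side. Also, in the coercivity step the relevant pointwise bound is $|\partial_{v_{j}}\phi_{j}|\lesssim\langle v\rangle^{\gamma+1}$ (with $\phi_{j}=\sum_{k}a_{jk}*(v_{k}\mu)$), not $\langle v\rangle^{\gamma}$, and your five index ranges overlap rather than partition — neither point affects validity.
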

\begin{proof}
     From the representation for $\mathcal L_{1}$ in Lemma \ref{representations}, it follows that
\begin{equation*}
\begin{split}
     &\left(H^{m}_{\delta_{1}}H^{n}_{\delta_{2}}\mathcal L_{1}f,\ H^{m}_{\delta_{1}}H^{n}_{\delta_{2}}f\right)_{H^{2}_{x}L^{2}_{v}}=\sum_{|\alpha|\le2}(\partial^{\alpha}_{x}H^{m}_{\delta_{1}}H^{n}_{\delta_{2}}\mathcal L_{1}f,\ \partial^{\alpha}_{x}H^{m}_{\delta_{1}}H^{n}_{\delta_{2}}f)_{L^{2}_{x, v}}\\
     &=\sum_{|\alpha|\le2}\sum_{j, k=1}^{3}(H^{m}_{\delta_{1}}H^{n}_{\delta_{2}}A_{+, j}\left((a_{jk}*\mu)A_{-, k}\partial^{\alpha}_{x}f\right),\ H^{m}_{\delta_{1}}H^{n}_{\delta_{2}}\partial^{\alpha}_{x}f)_{L^{2}_{x, v}},
\end{split}
\end{equation*}
noting that for all $1\le j\le3$ and $l=1, 2$, $[H_{\delta_{l}}, A_{+, j}]=0$, then from integration by parts and Lemma \ref{lemma3-1}, we have
\begin{equation*}
\begin{split}
     &(H^{m}_{\delta_{1}}H^{n}_{\delta_{2}}\mathcal L_{1}f,\ H^{m}_{\delta_{1}}H^{n}_{\delta_{2}}f)_{H^{2}_{x}L^{2}_{v}}=\sum_{|\alpha|\le2}\sum_{j, k=1}^{3}\sum_{l=0}^{m}\sum_{p=0}^{n}C_{m}^{l}C_{n}^{p}\\
     &\quad\times(a_{jk}*(\sqrt\mu H^{l}_{\delta_{1}}H^{p}_{\delta_{2}}\sqrt\mu)H^{m-l}_{\delta_{1}}H^{n-p}_{\delta_{2}}A_{-, k}\partial^{\alpha}_{x}f,\ A_{-, j}H^{m}_{\delta_{1}}H^{n}_{\delta_{2}}\partial^{\alpha}_{x}f)_{L^{2}_{x, v}}.
\end{split}
\end{equation*}
Next, we would show that
\begin{equation}\label{diff}
     \sqrt\mu H^{l}_{\delta_{1}}H^{p}_{\delta_{2}}\sqrt\mu=t^{\delta_{1}l}t^{\delta_{2}p}\partial^{l+p}_{v_{1}}\mu, \ \forall \ l+p\ge1.
\end{equation}
We first consider $p=0$, by direct calculation, one gets $H_{\delta_{1}}\sqrt\mu=t^{\delta_{1}}\sqrt\mu^{-1}\partial_{v_{1}}\mu$, assume it holds for $l-1$, then
\begin{equation*}
\begin{split}
     &\sqrt\mu H^{l}_{\delta_{1}}\sqrt\mu=\sqrt\mu H_{\delta_{1}}H^{l-1}_{\delta_{1}}\sqrt\mu=t^{\delta_{1}(l-1)}\sqrt\mu H_{\delta_{1}}(\sqrt\mu^{-1}\partial^{l-1}_{v_{1}}\mu)=t^{\delta_{1}l}\partial^{l}_{v_{1}}\mu,
\end{split}
\end{equation*}
here we use the fact
$$\sqrt\mu^{-1}\partial_{v_{j}}F=\left(\partial_{v_{j}}-\frac{v_{j}}{2}\right)\left(\sqrt\mu^{-1}F\right)=-A_{+, j}\left(\sqrt\mu^{-1}F\right),\quad  \forall F\in\mathcal S(\mathbb R^{3}_{v}).$$
Since $H_{\delta_{1}}H_{\delta_{2}}=H_{\delta_{2}}H_{\delta_{1}}$,
it follows that
\begin{equation*}
\begin{split}
     \sqrt\mu H^{l}_{\delta_{1}}H_{\delta_{2}}\sqrt\mu&=\sqrt\mu H_{\delta_{2}}H^{l}_{\delta_{1}}\sqrt\mu=t^{\delta_{1}l}\sqrt\mu H_{\delta_{2}}\left(\sqrt\mu^{-1}\partial^{l}_{v_{1}}\mu\right)=t^{\delta_{1}l}t^{\delta_{2}}\partial^{l+1}_{v_{1}}\mu,
\end{split}
\end{equation*}
then we can obtain that \eqref{diff} holds. So that
\begin{equation*}
\begin{split}
     &\left(H^{m}_{\delta_{1}}H^{n}_{\delta_{2}}\mathcal L_{1}f,\ H^{m}_{\delta_{1}}H^{n}_{\delta_{2}}f\right)_{H^{2}_{x}L^{2}_{v}}\\
     &=\sum_{|\alpha|\le2}\sum_{j, k=1}^{3}\left(\bar a_{jk}A_{-, k}H^{m}_{\delta_{1}}H^{n}_{\delta_{2}}\partial^{\alpha}_{x}f,\ A_{-, j}H^{m}_{\delta_{1}}H^{n}_{\delta_{2}}\partial^{\alpha}_{x}f\right)_{L^{2}_{x, v}}\\
     &\ +\sum_{|\alpha|\le2}\sum_{j, k=1}^{3}\left(\bar a_{jk}\left[H^{m}_{\delta_{1}}H^{n}_{\delta_{2}}, A_{-, k}\right]\partial^{\alpha}_{x}f,\ A_{-, j}H^{m}_{\delta_{1}}H^{n}_{\delta_{2}}\partial^{\alpha}_{x}f\right)_{L^{2}_{x, v}}\\
     &\ +\sum_{|\alpha|\le2}\sum_{j, k=1}^{3}\sum_{l=1}^{m}C_{m}^{l}t^{\delta_{1}l}\left(\partial^{l}_{v_{1}}\bar a_{jk}H^{m-l}_{\delta_{1}}H^{n}_{\delta_{2}}A_{-, k}\partial^{\alpha}_{x}f,\ A_{-, j}H^{m}_{\delta_{1}}H^{n}_{\delta_{2}}\partial^{\alpha}_{x}f\right)_{L^{2}_{x, v}}\\
     &\ +\sum_{|\alpha|\le2}\sum_{j, k=1}^{3}\sum_{p=1}^{n}C_{n}^{p}t^{\delta_{2}p}\left(\partial^{p}_{v_{1}}\bar a_{jk}H^{m}_{\delta_{1}}H^{n-p}_{\delta_{2}}A_{-, k}\partial^{\alpha}_{x}f,\ A_{-, j}H^{m}_{\delta_{1}}H^{n}_{\delta_{2}}\partial^{\alpha}_{x}f\right)_{L^{2}_{x, v}}\\
     &\ +\sum_{|\alpha|\le2}\sum_{j, k=1}^{3}\sum_{l=1}^{m}\sum_{p=1}^{n}C_{n}^{p}C_{m}^{l}t^{\delta_{1}l}t^{\delta_{2}p}\left(\partial^{p+l}_{v_{1}}\bar a_{jk}H^{m-l}_{\delta_{1}}H^{n-p}_{\delta_{2}}A_{-, k}\partial^{\alpha}_{x}f,\ A_{-, j}H^{m}_{\delta_{1}}H^{n}_{\delta_{2}}\partial^{\alpha}_{x}f\right)_{L^{2}_{x, v}}\\
     &=I_{1}+I_{2}+I_{3}+I_{4}+I_{5},
\end{split}
\end{equation*}
here $\bar a_{jk}=a_{jk}*\mu$. For the term $I_{1}$, since
$$\sum_{j=1}^{3}a_{jk}(v)v_{j}=\sum_{k=1}^{3}a_{jk}(v)v_{k}=0,$$
then we have
\begin{equation*}
\begin{split}
     I_{1}&=|||H^{m}_{\delta_{1}}H^{n}_{\delta_{2}}f|||_{H^{2}_{x}, \sigma}^{2}-\sum_{|\alpha|\le2}\sum_{j=1}^{3}\int_{\mathbb T^{3}_{x}\times\mathbb R^{3}_{v}}\phi_{j}H^{m}_{\delta_{1}}H^{n}_{\delta_{2}}\partial^{\alpha}_{x}f(x, v)\partial_{v_{j}}H^{m}_{\delta_{1}}H^{n}_{\delta_{2}}\partial^{\alpha}_{x}f(x, v)dxdv\\
     &=|||H^{m}_{\delta_{1}}H^{n}_{\delta_{2}}f|||_{H^{2}_{x}, \sigma}^{2}+\tilde I_{1},
\end{split}
\end{equation*}
with
$$\phi_{j}=\sum_{k=1}^{3}a_{jk}*(v_{k}\mu).$$
From integration by parts, one has
\begin{equation*}
\begin{split}
     \tilde I_{1}=\frac12\sum_{|\alpha|\le2}\sum_{j=1}^{3}\int_{\mathbb T^{3}_{x}\times\mathbb R^{3}_{v}}\partial_{v_{j}}\phi_{j}|H^{m}_{\delta_{1}}H^{n}_{\delta_{2}}\partial^{\alpha}_{x}f(x, v)|^{2}dxdv,
\end{split}
\end{equation*}
by using \eqref{e delta}, we have
$$|\partial_{v_{j}}\phi_{j}(v)|\lesssim\langle v\rangle^{\gamma+1},$$
then from Cauchy-Schwarz inequality and corollary 1 of~\cite{G-1}, we can get that
\begin{equation*}
\begin{split}
     \left|\tilde I_{1}\right|&\lesssim\sum_{|\alpha|\le2}\int_{\mathbb T^{3}_{x}}\|\langle\cdot\rangle^{\frac\gamma2}H^{m}_{\delta_{1}}H^{n}_{\delta_{2}}\partial^{\alpha}_{x}f(x, \cdot)\|_{L^{2}_{v}}\|\langle \cdot\rangle^{\frac\gamma2+1}H^{m}_{\delta_{1}}H^{n}_{\delta_{2}}\partial^{\alpha}_{x}f(x, \cdot)\|_{L^{2}_{v}}dx\\
     &\lesssim\|\langle v\rangle^{\frac\gamma2}H^{m}_{\delta_{1}}H^{n}_{\delta_{2}}f\|_{H^{2}_{x}L^{2}_{v}}|||H^{m}_{\delta_{1}}H^{n}_{\delta_{2}}f|||_{H^{2}_{x}, \sigma}.
\end{split}
\end{equation*}
For the term $I_{2}$, since
$$[H_{\delta_{j}}, A_{-, k}]=-t^{\delta}[A_{+, 1}, A_{-, k}]=0, \ (k\ne1), \ j=1, 2,$$
and
$$[H_{\delta_{j}}, A_{-, 1}]=-t^{\delta}[A_{+, 1}, A_{-, 1}]=t^{\delta_{j}}, \ j=1, 2,$$
one can deduce that
\begin{equation*}
\begin{split}
     H^{m}_{\delta_{1}}H^{n}_{\delta_{2}}A_{-, k}=A_{-, k}H^{m}_{\delta_{1}}H^{n}_{\delta_{2}}, (k\ne1),
\end{split}
\end{equation*}
and
$$H^{m}_{\delta_{1}}H^{n}_{\delta_{2}}A_{-, 1}=A_{-, 1}H^{m}_{\delta_{1}}H^{n}_{\delta_{2}}+mt^{\delta_{1}}H^{m-1}_{\delta_{1}}H^{n}_{\delta_{2}}+nt^{\delta_{2}}H^{m}_{\delta_{1}}H^{n-1}_{\delta_{2}},$$
these lead to
\begin{equation*}
\begin{split}
     I_{2}&=mt^{\delta_{1}}\sum_{|\alpha|\le2}\sum_{j=1}^{3}\int_{\mathbb T^{3}_{x}\times\mathbb R^{3}_{v}}\bar a_{j1}H^{m-1}_{\delta_{1}}H^{n}_{\delta_{2}}\partial^{\alpha}_{x}f(x, v)A_{-, j}H^{m}_{\delta_{1}}H^{n}_{\delta_{2}}\partial^{\alpha}_{x}f(x, v)dxdv\\
     &\quad+nt^{\delta_{2}}\sum_{|\alpha|\le2}\sum_{j=1}^{3}\int_{\mathbb T^{3}_{x}\times\mathbb R^{3}_{v}}\bar a_{j1}H^{m}_{\delta_{1}}H^{n-1}_{\delta_{2}}\partial^{\alpha}_{x}f(x, v)A_{-, j}H^{m}_{\delta_{1}}H^{n}_{\delta_{2}}\partial^{\alpha}_{x}f(x, v)dxdv\\
     &=I_{2, 1}+I_{2, 2}.
\end{split}
\end{equation*}
Decomposing $\mathbb R^{3}\times\mathbb R^{3}$ as
$$\Omega_{1}\cup\Omega_{2}\cup\Omega_{3}=\{|v|\le1\}\cup\{|v|\ge1, 2|w|\ge|v|\}\cup\{|v|\ge1, 2|w|\le|v|\},$$
in the region $\Omega_{1}$, from \eqref{e delta}, we have $|\bar a_{jk}(v)|\le\langle v\rangle^{\gamma+1}$, in the region $\Omega_{2}$, since $|v-w|\le |v|+|w|\le 3|w|$ and $\langle\cdot\rangle^{r}\mu^{s}\in L^{\infty}$ for all $r, s\ge0$, then from \eqref{e delta}, one has
\begin{equation*}
\begin{split}
     \left|\int_{2|w|\ge |v|}a_{j1}(v-w)\mu(w)dw\right|\lesssim\int_{\mathbb R^{3}}|v-w|^{\gamma+1}\mu^{\frac12}(w)dw\lesssim\langle v\rangle^{\gamma+1},
\end{split}
\end{equation*}
so that by using the Cauchy-Schwarz inequality and \eqref{H}, we have
\begin{equation*}
\begin{split}
     &\left|\sum_{|\alpha|\le2}\sum_{j=1}^{3}\int_{\mathbb T^{3}_{x}\times\Omega_{p}}a_{j1}(v-w)\mu(w)H^{m-1}_{\delta_{1}}H^{n}_{\delta_{2}}f(x, v)A_{-, j}H^{m}_{\delta_{1}}H^{n}_{\delta_{2}}f(x, v)dwdvdx\right|\\
     &\lesssim\sum_{|\alpha|\le2}\sum_{j=1}^{3}\int_{\mathbb T^{3}_{x}}\left\|\langle \cdot\rangle^{\frac\gamma2+1}H^{m-1}_{\delta_{1}}H^{n}_{\delta_{2}}\partial^{\alpha}_{x}f\right\|_{L^{2}_{v}}\left\|\langle\cdot\rangle^{\frac\gamma2}A_{-, j}H^{m}_{\delta_{1}}H^{n}_{\delta_{2}}f(x, \cdot)\right\|_{L^{2}_{v}}dx\\
     &\lesssim|||H^{m-1}_{\delta_{1}}H^{n}_{\delta_{2}}f|||_{H^{2}_{x}, \sigma}|||H^{m}_{\delta_{1}}H^{n}_{\delta_{2}}f|||_{H^{2}_{x}, \sigma},
\end{split}
\end{equation*}
here $p=1, 2$. In the region $\Omega_{3}$, expanding $a_{jk}(v-w)$ to get
$$a_{jk}(v-w)=a_{jk}(w)+\sum_{l=1}^{3}\int_{0}^{1}\partial_{l}a_{jk}(v-sw)dsw_{l},$$
we can obtain that
\begin{equation*}
\begin{split}
     &\int_{\Omega_{3}}a_{j1}(v-w)\mu(w)H^{m-1}_{\delta_{1}}H^{n}_{\delta_{2}}\partial^{\alpha}_{x}f(x, v)A_{-, j}H^{m}_{\delta_{1}}H^{n}_{\delta_{2}}\partial^{\alpha}_{x}f(x, v)dwdv\\
     &=\int_{\Omega_{3}}a_{j1}(w)\mu(w)H^{m-1}_{\delta_{1}}H^{n}_{\delta_{2}}\partial^{\alpha}_{x}f(x, v)A_{-, j}H^{m}_{\delta_{1}}H^{n}_{\delta_{2}}\partial^{\alpha}_{x}f(x, v)dwdv\\
     &\ +\sum_{l=1}^{3}\int_{0}^{1}\int_{\Omega_{3}}\partial_{l}a_{jk}(v-sw)\mu(w)w_{l}H^{m-1}_{\delta_{1}}H^{n}_{\delta_{2}}\partial^{\alpha}_{x}f(x, v) A_{-, j}H^{m}_{\delta_{1}}H^{n}_{\delta_{2}}\partial^{\alpha}_{x}f(x, v)dwdvds.
\end{split}
\end{equation*}
Since $\gamma\ge0$, it follow that
$$\left|\partial_{l}a_{jk}(v-sw)\right|\lesssim |v-sw|^{\gamma+1}\lesssim|v|^{\gamma+1}+|w|^{\gamma+1}\lesssim\langle v\rangle^{\gamma+1}, \quad \forall \ 0<s<1,$$
notice that $\langle\cdot\rangle^{r}\mu\in L^{\infty}(\mathbb R^{3}), \forall r\ge0$, then from Cauchy-Schwarz inequality, one has
\begin{equation*}
\begin{split}
     &\left|\int_{\Omega_{3}}a_{j1}(v-w)\mu(w)H^{m-1}_{\delta_{1}}H^{n}_{\delta_{2}}\partial^{\alpha}_{x}f(x, v)A_{-, j}H^{m}_{\delta_{1}}H^{n}_{\delta_{2}}\partial^{\alpha}_{x}f(x, v)dwdv\right|\\
     &\lesssim\left\|H^{m-1}_{\delta_{1}}H^{n}_{\delta_{2}}\partial^{\alpha}_{x}f(x, \cdot)\right\|_{L^{2}(\mathbb R^{3}_{v})}\left\|A_{-, j}H^{m}_{\delta_{1}}H^{n}_{\delta_{2}}\partial^{\alpha}_{x}f(x, \cdot)\right\|_{L^{2}(\mathbb R^{3}_{v})}\\
      &\quad+\left\|\langle\cdot\rangle^{\frac\gamma2+1}H^{m-1}_{\delta_{1}}H^{n}_{\delta_{2}}\partial^{\alpha}_{x}f(x, \cdot)\right\|_{L^{2}(\mathbb R^{3}_{v})}\left\|\langle\cdot\rangle^{\frac\gamma2}A_{-, j}H^{m}_{\delta_{1}}H^{n}_{\delta_{2}}\partial^{\alpha}_{x}f(x, \cdot)\right\|_{L^{2}(\mathbb R^{3}_{v})},
\end{split}
\end{equation*}
so that using the fact $\gamma\ge0$ and \eqref{H}, we have
\begin{equation*}
\begin{split}
     &\left|\sum_{|\alpha|\le2}\sum_{j=1}^{3}\int_{\mathbb T^{3}_{x}}\int_{\Omega_{3}}a_{j1}(v-w)\mu(w)H^{m-1}_{\delta_{1}}H^{n}_{\delta_{2}}f(x, v)A_{-, j}H^{m}_{\delta_{1}}H^{n}_{\delta_{2}}f(x, v)dwdv\right|\\
     &\lesssim|||H^{m-1}_{\delta_{1}}H^{n}_{\delta_{2}}f|||_{H^{2}_{x}, \sigma}|||H^{m}_{\delta_{1}}H^{n}_{\delta_{2}}f|||_{H^{2}_{x}, \sigma}.
\end{split}
\end{equation*}
Combining the above results, it follows that
$$|I_{2, 1}|\lesssim mt^{\delta_{1}}|||H^{m-1}_{\delta_{1}}H^{n}_{\delta_{2}}f|||_{H^{2}_{x}, \sigma}|||H^{m}_{\delta_{1}}H^{n}_{\delta_{2}}f|||_{H^{2}_{x}, \sigma}.$$
Similarly, we can deduce
$$|I_{2, 2}|\lesssim nt^{\delta_{2}}|||H^{m}_{\delta_{1}}H^{n-1}_{\delta_{2}}f|||_{H^{2}_{x}, \sigma}|||H^{m}_{\delta_{1}}H^{n}_{\delta_{2}}f|||_{H^{2}_{x}, \sigma}.$$
So that
$$|I_{2}|\lesssim \left(mt^{\delta_{1}}|||H^{m-1}_{\delta_{1}}H^{n}_{\delta_{2}}f|||_{H^{2}_{x}, \sigma}+nt^{\delta_{2}}|||H^{m}_{\delta_{1}}H^{n-1}_{\delta_{2}}f|||_{H^{2}_{x}, \sigma}\right)|||H^{m}_{\delta_{1}}H^{n}_{\delta_{2}}f|||_{H^{2}_{x}, \sigma}.$$
For the term $I_{3}$, we can rewrite it as
\begin{equation*}
\begin{split}
     I_{3}&=\sum_{|\alpha|\le2}\sum_{j, k=1}^{3}\sum_{l=2}^{m}C_{m}^{l}t^{\delta_{1}l}(\partial^{l}_{v_{1}}\bar a_{jk}A_{-, k}H^{m-l}_{\delta_{1}}H^{n}_{\delta_{2}}\partial^{\alpha}_{x}f,\ A_{-, j}H^{m}_{\delta_{1}}H^{n}_{\delta_{2}}\partial^{\alpha}_{x}f)_{L^{2}_{x, v}}\\
     &\quad+\sum_{|\alpha|\le2}\sum_{j, k=1}^{3}\sum_{l=1}^{m}C_{m}^{l}t^{\delta_{1}l}(\partial^{l}_{v_{1}}\bar a_{jk}[H^{m-l}_{\delta_{1}}H^{n}_{\delta_{2}}, A_{-, k}]\partial^{\alpha}_{x}f,\ A_{-, j}H^{m}_{\delta_{1}}H^{n}_{\delta_{2}}\partial^{\alpha}_{x}f)_{L^{2}_{x, v}}\\
     &\quad+mt^{\delta_{1}}\sum_{|\alpha|\le2}\sum_{j, k=1}^{3}(\partial_{v_{1}}\bar a_{jk}A_{-, k}H^{m-1}_{\delta_{1}}H^{n}_{\delta_{2}}\partial^{\alpha}_{x}f,\ A_{-, j}H^{m}_{\delta_{1}}H^{n}_{\delta_{2}}\partial^{\alpha}_{x}f)_{L^{2}_{x, v}}\\
     &=I_{3, 1}+I_{3, 2}+I_{3, 3}.
\end{split}
\end{equation*}
From the Lemma 2.1 of~\cite{L-1}, one has
\begin{equation}\label{derivation a}
   |\partial^{l}_{v_{1}}\bar a_{jk}|\lesssim l^{\frac{[\gamma]}{2}+1}\sqrt{l!}\langle v\rangle^{\gamma}, \quad \forall \ l\ge2,
\end{equation}
then using Cauchy-Schwarz inequality, it follows that
\begin{equation*}
\begin{split}
     |I_{3, 1}|&\lesssim\sum_{l=2}^{m}C^{l}_{m}l^{\frac{[\gamma]}{2}+1}\sqrt{l!}t^{\delta_{1} l}|||H^{m-l}_{\delta_{1}}H^{n}_{\delta_{2}}f|||_{H^{2}_{x}, \sigma}|||H^{m}_{\delta_{1}}H^{n}_{\delta_{2}}f|||_{H^{2}_{x}, \sigma}.
\end{split}
\end{equation*}
Applying \eqref{derivation a} and the result of $I_{2}$, we can obtain that
\begin{equation*}
\begin{split}
     |I_{3, 2}|&\lesssim\sum_{l=1}^{m-1}C_{m}^{l}l^{\frac{[\gamma]}{2}+1}\sqrt{l!}(m-l)t^{\delta_{1}(l+1)}|||H^{m-l-1}_{\delta_{1}}H^{n}_{\delta_{2}}f|||_{H^{2}_{x}, \sigma}|||H^{m}_{\delta_{1}}H^{n}_{\delta_{2}}f|||_{H^{2}_{x}, \sigma}\\
     &\quad+\sum_{l=1}^{m}C_{m}^{l}l^{\frac{[\gamma]}{2}+1}\sqrt{l!}nt^{\delta_{1}l}t^{\delta_{2}}|||H^{m-l}_{\delta_{1}}H^{n-1}_{\delta_{2}}f|||_{H^{2}_{x}, \sigma}|||H^{m}_{\delta_{1}}H^{n}_{\delta_{2}}f|||_{H^{2}_{x}, \sigma}.
\end{split}
\end{equation*}
Decomposing $\mathbb R^{3}\times\mathbb R^{3}=\Omega_{1}\cup\Omega_{2}\cup\Omega_{3}$ as above, then we can obtain that
\begin{equation*}
\begin{split}
     |I_{3, 3}|&\lesssim mt^{\delta_{1}}|||H^{m-1}_{\delta_{1}}H^{n}_{\delta_{2}}f|||_{H^{2}_{x}, \sigma}|||H^{m}_{\delta_{1}}H^{n}_{\delta_{2}}f|||_{H^{2}_{x}, \sigma}.
\end{split}
\end{equation*}
And therefore,
\begin{equation*}
\begin{split}
     |I_{3}|&\lesssim \sum_{l=2}^{m}C^{l}_{m}l^{\frac{[\gamma]}{2}+1}\sqrt{l!}t^{\delta_{1} l}|||H^{m-l}_{\delta_{1}}H^{n}_{\delta_{2}}f|||_{H^{2}_{x}, \sigma}|||H^{m}_{\delta_{1}}H^{n}_{\delta_{2}}f|||_{H^{2}_{x}, \sigma}\\
     &\quad+\sum_{l=1}^{m-1}C_{m}^{l}l^{\frac{[\gamma]}{2}+1}\sqrt{l!}(m-l)t^{\delta_{1}(l+1)}|||H^{m-l-1}_{\delta_{1}}H^{n}_{\delta_{2}}f|||_{H^{2}_{x}, \sigma}|||H^{m}_{\delta_{1}}H^{n}_{\delta_{2}}f|||_{H^{2}_{x}, \sigma}\\
     &\quad+\sum_{l=1}^{m}C_{m}^{l}l^{\frac{[\gamma]}{2}+1}\sqrt{l!}nt^{\delta_{1}l}t^{\delta_{2}}|||H^{m-l}_{\delta_{1}}H^{n-1}_{\delta_{2}}f|||_{H^{2}_{x}, \sigma}|||H^{m}_{\delta_{1}}H^{n}_{\delta_{2}}f|||_{H^{2}_{x}, \sigma}\\
     &\quad+mt^{\delta_{1}}|||H^{m-1}_{\delta_{1}}H^{n}_{\delta_{2}}f|||_{H^{2}_{x}, \sigma}|||H^{m}_{\delta_{1}}H^{n}_{\delta_{2}}f|||_{H^{2}_{x}, \sigma}\\
     &\lesssim \sum_{l=1}^{m}C^{l}_{m}l^{\frac{[\gamma]}{2}+2}\sqrt{l!}t^{\delta_{1} l}|||H^{m-l}_{\delta_{1}}H^{n}_{\delta_{2}}f|||_{H^{2}_{x}, \sigma}|||H^{m}_{\delta_{1}}H^{n}_{\delta_{2}}f|||_{H^{2}_{x}, \sigma}\\
     &\quad+\sum_{l=1}^{m}C_{m}^{l}l^{\frac{[\gamma]}{2}+1}\sqrt{l!}nt^{\delta_{1}l}t^{\delta_{2}}|||H^{m-l}_{\delta_{1}}H^{n-1}_{\delta_{2}}f|||_{H^{2}_{x}, \sigma}|||H^{m}_{\delta_{1}}H^{n}_{\delta_{2}}f|||_{H^{2}_{x}, \sigma}.
\end{split}
\end{equation*}
Since $H_{\delta_{1}}H_{\delta_{2}}=H_{\delta_{2}}H_{\delta_1}$, as the argument in $I_{3}$, we have
\begin{equation*}
\begin{split}
     |I_{4}|&\lesssim \sum_{p=1}^{n}C^{p}_{n}p^{\frac{[\gamma]}{2}+2}\sqrt{p!}t^{\delta_{2} p}|||H^{m}_{\delta_{1}}H^{n-p}_{\delta_{2}}f|||_{H^{2}_{x}, \sigma}|||H^{m}_{\delta_{1}}H^{n}_{\delta_{2}}f|||_{H^{2}_{x}, \sigma}\\
     &\quad+\sum_{p=1}^{n}C_{n}^{p}p^{\frac{[\gamma]}{2}+1}\sqrt{p!}mt^{\delta_{1}}t^{\delta_{2}p}|||H^{m-1}_{\delta_{1}}H^{n-p}_{\delta_{2}}f|||_{H^{2}_{x}, \sigma}|||H^{m}_{\delta_{1}}H^{n}_{\delta_{2}}f|||_{H^{2}_{x}, \sigma}.
\end{split}
\end{equation*}
It remains to consider $I_{5}$. Noting that
\begin{equation*}
\begin{split}
     &(\partial^{p+l}_{v_{1}}\bar a_{jk}H^{m-l}_{\delta_{1}}H^{n-p}_{\delta_{2}}A_{-, k}\partial^{\alpha}_{x}f,\ A_{-, j}H^{m}_{\delta_{1}}H^{n}_{\delta_{2}}\partial^{\alpha}_{x}f)_{L^{2}_{x, v}}\\
     &=(\partial^{p+l}_{v_{1}}\bar a_{jk}A_{-, k}H^{m-l}_{\delta_{1}}H^{n-p}_{\delta_{2}}\partial^{\alpha}_{x}f,\ A_{-, j}H^{m}_{\delta_{1}}H^{n}_{\delta_{2}}\partial^{\alpha}_{x}f)_{L^{2}_{x, v}}\\
     &\quad+(\partial^{p+l}_{v_{1}}\bar a_{jk}[H^{m-l}_{\delta_{1}}H^{n-p}_{\delta_{2}}, A_{-, k}]\partial^{\alpha}_{x}f,\ A_{-, j}H^{m}_{\delta_{1}}H^{n}_{\delta_{2}}\partial^{\alpha}_{x}f)_{L^{2}_{x, v}}.
\end{split}
\end{equation*}
By using \eqref{derivation a}, then from Cauchy-Schwarz inequality
\begin{equation*}
\begin{split}
     &\left|(\partial^{p+l}_{v_{1}}\bar a_{jk}A_{-, k}H^{m-l}_{\delta_{1}}H^{n-p}_{\delta_{2}}\partial^{\alpha}_{x}f,\ A_{-, j}H^{m}_{\delta_{1}}H^{n}_{\delta_{2}}\partial^{\alpha}_{x}f)_{L^{2}_{x, v}}\right|\\
     &\lesssim (p+l)^{\frac{[\gamma]}{2}+1}\sqrt{(p+l)!}\|\langle \cdot\rangle^{\gamma/2}A_{-, k}H^{m-l}_{\delta_{1}}H^{n-p}_{\delta_{2}}\partial^{\alpha}_{x}f\|_{L^{2}_{x, v}}\|\langle \cdot\rangle^{\gamma/2}A_{-, j}H^{m}_{\delta_{1}}H^{n-p}_{\delta_{2}}\partial^{\alpha}_{x}f\|_{L^{2}_{x, v}}.
\end{split}
\end{equation*}
So that combining the result of $I_{2}$, we have
\begin{equation*}
\begin{split}
     |I_{5}|&\lesssim\sum_{l=1}^{m}\sum_{p=1}^{n}C_{n}^{p}C_{m}^{l}(p+l)^{\frac{[\gamma]}{2}+1}\sqrt{(p+l)!}t^{\delta_{1} l}t^{\delta_{2} p}|||H^{m-l}_{\delta_{1}}H^{n-p}_{\delta_{2}}f|||_{H^{2}_{x}, \sigma}|||H^{m}_{\delta_{1}}H^{n}_{\delta_{2}}f|||_{H^{2}_{x}, \sigma}\\
     &\quad+\sum_{l=1}^{m}\sum_{p=1}^{n}C_{n}^{p}C_{m}^{l}(p+l)^{\frac{[\gamma]}{2}+1}\sqrt{(p+l)!}t^{\delta_{1} l}t^{\delta_{2} p}\big((m-l)t^{\delta_{1}}|||H^{m-l-1}_{\delta_{1}}H^{n-p}_{\delta_{2}}f|||_{H^{2}_{x}, \sigma}\\
     &\quad+(n-p)t^{\delta_{2}}|||H^{m-l}_{\delta_{1}}H^{n-p-1}_{\delta_{2}}f|||_{H^{2}_{x}, \sigma}\big)|||H^{m}_{\delta_{1}}H^{n}_{\delta_{2}}f|||_{H^{2}_{x}, \sigma}\\
     &\lesssim\sum_{l=1}^{m}\sum_{p=1}^{n}C_{n}^{p}C_{m}^{l}(p+l)^{\frac{[\gamma]}{2}+2}\sqrt{(p+l)!}t^{\delta_{1} l}t^{\delta_{2} p}|||H^{m-l}_{\delta_{1}}H^{n-p}_{\delta_{2}}f|||_{H^{2}_{x}, \sigma}|||H^{m}_{\delta_{1}}H^{n}_{\delta_{2}}f|||_{H^{2}_{x}, \sigma}.
\end{split}
\end{equation*}
Combining these results, by using Cauchy-Schwarz inequality, there exists a constant $C_{4}>0$, independent of $m$ and $n$, such that
\begin{equation*}
\begin{split}
     &\left(H^{m}_{\delta_{1}}H^{n}_{\delta_{2}}\mathcal L_{1}f,\ H^{m}_{\delta_{1}}H^{n}_{\delta_{2}}f\right)_{H^{2}_{x}L^{2}_{v}}\ge\frac34|||H^{m}_{\delta_{1}}H^{n}_{\delta_{2}}f|||_{H^{2}_{x}, \sigma}^{2}-C_{4}\|\langle v\rangle^{\frac\gamma2}H^{m}_{\delta_{1}}H^{n}_{\delta_{2}}f\|^{2}_{H^{2}_{x}L^{2}_{v}}\\
     &-C_{4}\sum_{l=1}^{m}C^{l}_{m}l^{\frac{[\gamma]}{2}+2}\sqrt{l!}t^{\delta_{1} l}|||H^{m-l}_{\delta_{1}}H^{n}_{\delta_{2}}f|||_{H^{2}_{x}, \sigma}|||H^{m}_{\delta_{1}}H^{n}_{\delta_{2}}f|||_{H^{2}_{x}, \sigma}\\
     &-C_{4}\sum_{l=0}^{m}C_{m}^{l}l^{\frac{[\gamma]}{2}+1}\sqrt{l!}nt^{\delta_{1}l}t^{\delta_{2}}|||H^{m-l}_{\delta_{1}}H^{n-1}_{\delta_{2}}f|||_{H^{2}_{x}, \sigma}|||H^{m}_{\delta_{1}}H^{n}_{\delta_{2}}f|||_{H^{2}_{x}, \sigma}\\
     &-C_{4}\sum_{p=1}^{n}C^{p}_{n}p^{\frac{[\gamma]}{2}+2}\sqrt{p!}t^{\delta_{2} p}|||H^{m}_{\delta_{1}}H^{n-p}_{\delta_{2}}f|||_{H^{2}_{x}, \sigma}|||H^{m}_{\delta_{1}}H^{n}_{\delta_{2}}f|||_{H^{2}_{x}, \sigma}\\
     &-C_{4}\sum_{p=0}^{n}C_{n}^{p}p^{\frac{[\gamma]}{2}+1}\sqrt{p!}mt^{\delta_{1}}t^{\delta_{2}p}|||H^{m-1}_{\delta_{1}}H^{n-p}_{\delta_{2}}f|||_{H^{2}_{x}, \sigma}|||H^{m}_{\delta_{1}}H^{n}_{\delta_{2}}f|||_{H^{2}_{x}, \sigma}\\
     &-C_{4}\sum_{l=1}^{m}\sum_{p=1}^{n}C_{n}^{p}C_{m}^{l}(p+l)^{\frac{[\gamma]}{2}+2}\sqrt{(p+l)!}t^{\delta_{1} l}t^{\delta_{2} p}|||H^{m-l}_{\delta_{1}}H^{n-p}_{\delta_{2}}f|||_{H^{2}_{x}, \sigma}|||H^{m}_{\delta_{1}}H^{n}_{\delta_{2}}f|||_{H^{2}_{x}, \sigma}.
\end{split}
\end{equation*}
\end{proof}
\begin{remark}
     For $f\in H^{\infty}(\mathbb T^{3}_{x}; \mathcal S(\mathbb R^{3}_{v}))$ and $\gamma\ge 0$, then
\begin{equation}\label{L-1-0}
\begin{split}
     (\mathcal L_{1}f,\ f)_{H^{2}_{x}L^{2}_{v}}\ge|||f|||_{H^{2}_{x}, \sigma}^{2}-c_{0}\|\langle v\rangle^{\frac\gamma2}f\|_{H^{2}_{x}L^{2}_{v}}|||f|||_{H^{2}_{x}, \sigma}^{2}.
\end{split}
\end{equation}
and for all $\delta>\frac32$, $m\in\mathbb N$ and $0<\varepsilon<1$
\begin{equation}\label{L-1-n-0}
\begin{split}
     (H^{m}_{\delta}\mathcal L_{1}f,\ H^{m}_{\delta}f)_{H^{2}_{x}L^{2}_{v}}&\ge(1-\varepsilon)|||H^{m}_{\delta}f|||_{H^{2}_{x}, \sigma}^{2}-C_{\varepsilon}\|\langle v\rangle^{\frac\gamma2}H^{m}_{\delta}f\|^{2}_{H^{2}_{x}L^{2}_{v}}\\
     &\ -C_{\varepsilon}\left(\sum_{l=1}^{m}C^{l}_{m}l^{\frac{[\gamma]}{2}+2}\sqrt{l!}t^{\delta l}|||H^{m-l}_{\delta}f|||_{H^{2}_{x}, \sigma}\right)^{2}.
\end{split}
\end{equation}
\end{remark}

\section{Energy estimates for multi-directional derivations}\label{s4}

In this section, we establish the energy estimates for multi-directional derivations.
\begin{lemma}\label{lemma4.1}
    For $\gamma\ge0$. Let $f$ be the smooth solution of Cauchy problem \eqref{1-2} with $\|f\|_{L^{\infty}([0, \infty[; H^2_xL^{2}_v)}$ small enough. Then for all $T>0$ and $\delta>\frac32$, there exist $B>0, \tilde B>0$ such that
\begin{equation}\label{k=0}
      \|f(t)\|^2_{H^2_xL^{2}_v}+\int_0^t|||f(\tau)|||^2_{H^2_x, \sigma} d\tau\le B^{2}\epsilon^{2}, \quad \forall \ 0<t\le T,
\end{equation}
and
    \begin{equation}\label{k=1}
        \begin{split}
            &\|H_{\delta}f(t)\|^2_{H^{2}_{x}L^{2}_{v}}+\int_{0}^{t}|||H_{\delta}f(\tau)|||_{H^{2}_{x}, \sigma}^{2}d\tau\le \tilde B^{2}\epsilon^{2}, \quad \forall \ 0<t\le T.
        \end{split}
    \end{equation}
\end{lemma}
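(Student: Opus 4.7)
The plan is to carry out standard energy estimates on $f$ and on $H_\delta f$ successively, using the coercivity and commutator results established in Section \ref{s3}.

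For \eqref{k=0}, I would take the $H^2_x L^2_v$ inner product of equation \eqref{1-2} with $f$. The transport term $(v\cdot\partial_x f, f)_{H^2_x L^2_v}$ vanishes by integration by parts in $x$. The contribution $(\mathcal{L}_1 f, f)_{H^2_x L^2_v}$ is bounded below by \eqref{L-1-0}, producing the dissipation $|||f|||^2_{H^2_x,\sigma}$ up to a multiplicative factor close to $1$ once $\epsilon$ is small. The $(\mathcal{L}_2 f, f)_{H^2_x L^2_v}$ contribution is controlled via Corollary \ref{corollary1} with $m=n=0$, while Lemma \ref{Gamma-1} handles $(\Gamma(f,f), f)_{H^2_x L^2_v}$, producing a term $\|f\|_{H^2_x L^2_v}|||f|||^2_{H^2_x,\sigma}$ that is absorbed into the dissipation thanks to $\|f\|_{L^{\infty}([0, \infty[; H^2_xL^2_v)}\le\epsilon$. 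Integrating in time from $0$ to $t$ and using $\|f_0\|_{H^2_x L^2_v}\le\epsilon$ then yields \eqref{k=0} for a universal constant $B$.

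For \eqref{k=1}, applying $H_\delta$ to \eqref{1-2} and using the key commutator identity \eqref{keyob} produces the evolution equation
\begin{equation*}
(\partial_t + v\cdot\partial_x)(H_\delta f) + H_\delta\mathcal{L}f = H_\delta \Gamma(f,f) - \delta t^{\delta-1}A_{+,1}f.
\end{equation*}
Pairing with $H_\delta f$ in $H^2_x L^2_v$, the transport term again vanishes. Proposition \ref{H-m-L-1} at $m=1,n=0$ furnishes the lower bound $\tfrac{3}{4}|||H_\delta f|||^2_{H^2_x,\sigma}$ for $(H_\delta\mathcal{L}_1 f, H_\delta f)_{H^2_x L^2_v}$ modulo a weighted $L^2$ term and a single remainder $C t^{\delta}|||f|||_{H^2_x,\sigma}|||H_\delta f|||_{H^2_x,\sigma}$, the latter controlled by Young's inequality and the dissipation bound on $f$ from \eqref{k=0}. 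Corollary \ref{corollary1} and Proposition \ref{H-m-Gamma} at $m=1,n=0$ estimate the $\mathcal{L}_2$ and nonlinear contributions: cross terms of the form $\|H_\delta f\|_{H^2_x L^2_v}|||H_\delta f|||_{H^2_x,\sigma}$ are split by Young, and cubic pieces of the form $\|f\|_{H^2_x L^2_v}|||H_\delta f|||^2_{H^2_x,\sigma}$ are absorbed by the smallness of $\epsilon$.

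The crux of the argument, and the main obstacle, is the singular source $\delta t^{\delta-1}A_{+,1}f$ arising from $[H_\delta,\partial_t+v\cdot\partial_x]$. I would estimate it by Young's inequality as
\begin{equation*}
\bigl|\bigl(\delta t^{\delta-1}A_{+,1}f, H_\delta f\bigr)_{H^2_x L^2_v}\bigr| \le \eta\,|||H_\delta f|||^2_{H^2_x,\sigma} + C_\eta\, t^{2(\delta-1)}\|A_{+,1}f\|^2_{H^2_x L^2_v},
\end{equation*}
together with $\|A_{+,1}f\|^2_{H^2_x L^2_v}\lesssim|||f|||^2_{H^2_x,\sigma}+\|f\|^2_{H^2_x L^2_v}$ from \eqref{H}. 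The choice $\delta>3/2$ is precisely what is needed: it makes $t^{2(\delta-1)}$ bounded on $[0,T]$, so the time integral of the above contribution is dominated by \eqref{k=0}. The powers $t^{\delta}, t^{\delta+1}$ in $H_\delta$ moreover ensure $H_\delta f(0)=0$ in view of the regularity \eqref{smooth}, so no initial-data term arises. Choosing $\eta$ small enough to absorb $|||H_\delta f|||^2_{H^2_x,\sigma}$ into the coercive left-hand side and applying Gronwall to the resulting differential inequality for $\|H_\delta f\|^2_{H^2_x L^2_v}$ then yields \eqref{k=1}.
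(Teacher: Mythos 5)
Your proposal is correct and follows essentially the same route as the paper: an energy estimate on $f$ itself, then one on $H_\delta f$ using the commutator identity \eqref{keyob}, with the singular source $\delta t^{\delta-1}A_{+,1}f$ controlled by Young's inequality together with \eqref{H} and the time-integrated dissipation from \eqref{k=0}, followed by Gronwall. The one point you gloss over is that the weighted terms $\|\langle v\rangle^{\gamma/2}\cdot\|^2_{H^2_xL^2_v}$ in the coercivity lower bounds \eqref{L-1-0} and \eqref{L-1-n-0} are absorbed via the interpolation inequality \eqref{interpolation} (H\"older plus Young in $v$), not via the smallness of $\epsilon$, which is reserved for absorbing the cubic term $\|f\|_{H^2_xL^2_v}|||\cdot|||^2_{H^2_x,\sigma}$; otherwise your argument matches the paper's.
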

\begin{proof}
     Since $f$ is the smooth solution of Cauchy problem \eqref{1-2}, we have
     $$\frac{1}{2}\frac{d}{dt}\|f(t)\|^{2}_{H^{2}_{x}L^{2}_{v}}+(\mathcal L_{1}f, f)_{H^{2}_{x}L^{2}_{v}}=(\Gamma(f, f), f)_{H^{2}_{x}L^{2}_{v}}-(\mathcal L_{2}f, f)_{H^{2}_{x}L^{2}_{v}}.$$
Since $\mathcal L_{2}f=-\Gamma(f, \sqrt\mu)$, then from Lemma \ref{Gamma},
\begin{equation*}
\begin{split}
     &\left|(\mathcal L_{2}f, f)_{L^{2}_{v}}\right|\lesssim\|f\|_{L^{2}_{v}}|||f|||_{\sigma},
\end{split}
\end{equation*}
noting that $\partial^{\alpha}_{x}\mathcal L_{2}f=\mathcal L_{2}\partial^{\alpha}_{x}f$, then
\begin{equation*}
\begin{split}
     &\left|(\mathcal L_{2}f, f)_{H^{2}_{x}L^{2}_{v}}\right|\lesssim\sum_{|\alpha|\le2}\int_{\mathbb T^{3}_{x}}\|\partial^{\alpha}_{x}f\|_{L^{2}_{v}}|||\partial^{\alpha}_{x}f|||_{\sigma}dx\lesssim\|f\|_{H^{2}_{x}L^{2}_{v}}|||f|||_{H^{2}_{x}, \sigma}.
\end{split}
\end{equation*}
By using H${\rm {\ddot o}}$lder¡¯s inequality and corollary 1 of~\cite{G-1}, one has for all $g\in H^\infty(\mathbb T^3_x; \mathcal S(\mathbb R^3_v))$
\begin{equation*}
\begin{split}
     \|\langle v\rangle^{\frac\gamma2}g\|^{2}_{H^{2}_{x}L^{2}_{v}}&\le\sum_{|\alpha|\le2}\int_{\mathbb T^{3}_{x}}\|\langle\cdot\rangle^{\frac\gamma2+1}\partial^{\alpha}_{x}g(x, \cdot)\|^{\frac{2\gamma}{\gamma+2}}_{L^{2}_{v}}\|\partial^{\alpha}_{x}g(x, \cdot)\|^{\frac{4}{\gamma+2}}_{L^{2}_{v}}dx\\
     &\le\sum_{|\alpha|\le2}\int_{\mathbb T^{3}_{x}}\left(\frac{1}{\sqrt{C_{1}}}|||\partial^{\alpha}_{x}g(x, \cdot)|||_{\sigma}\right)^{\frac{2\gamma}{\gamma+2}}\|\partial^{\alpha}_{x}g(x, \cdot)\|^{\frac{4}{\gamma+2}}_{L^{2}_{v}}dx,
\end{split}
\end{equation*}
then from the Young inequality and the fact $\gamma\ge0$, we have that for all $0<\eta<1$
\begin{equation*}
\begin{split}
     &\left(\frac{1}{\sqrt{C_{1}}}|||\partial^{\alpha}_{x}g(x, \cdot)|||_{\sigma}\right)^{\frac{2\gamma}{\gamma+2}}\|\partial^{\alpha}_{x}g(x, \cdot)\|^{\frac{4}{\gamma+2}}_{L^{2}_{v}}\le\eta|||\partial^{\alpha}_{x}g(x, \cdot)|||^{2}_{\sigma}+C_{\eta}\|\partial^{\alpha}_{x}g(x, \cdot)\|^{2}_{L^{2}_{v}},
\end{split}
\end{equation*}
then we can get for all $g\in H^\infty(\mathbb T^3_x; \mathcal S(\mathbb R^3_v))$
\begin{equation}\label{interpolation}
     \|\langle v\rangle^{\frac\gamma2}g\|^{2}_{H^{2}_{x}L^{2}_{v}}\le\eta|||g|||^{2}_{H^{2}_{x}, \sigma}+C_{\eta}\|g\|^{2}_{H^{2}_{x}L^{2}_{v}},  \ \forall \ 0<\eta<1.
\end{equation}
So that applying \eqref{interpolation} with $g=f$ and $\eta=\frac18$, from \eqref{L-1-0} and Cauchy-Schwarz inequality, one has
$$(\mathcal L_{1}f,\ f)_{H^{2}_{x}L^{2}_{v}}\ge\frac34|||f|||_{H^{2}_{x}, \sigma}^{2}-\tilde c_{0}\|f\|^{2}_{H^{2}_{x}L^{2}_{v}}.$$
Therefore, combining the above inequalities and \eqref{Gamma-1}, one can get
\begin{equation*}
\begin{split}
     &\frac{1}{2}\frac{d}{dt}\|f(t)\|^{2}_{H^{2}_{x}L^{2}_{v}}+\frac34|||f|||^{2}_{H^{2}_{x}, \sigma}\\
     &\le\tilde c_{0}\|f\|^{2}_{H^{2}_{x}L^{2}_{v}}+C_{2}\|f\|_{H^{2}_{x}L^{2}_{v}}|||f|||^{2}_{H^{2}_{x}, \sigma}+\tilde C_{2}\|f\|_{H^{2}_{x}L^{2}_{v}}|||f|||_{H^{2}_{x}, \sigma}\\
     &\le C_{2}\|f\|_{H^{2}_{x}L^{2}_{v}}|||f|||^{2}_{H^{2}_{x}, \sigma}+(\tilde c_{0}+2(\tilde C_{2})^{2})\|f\|^{2}_{H^{2}_{x}L^{2}_{v}}+\frac18|||f|||^{2}_{H^{2}_{x}, \sigma}.
\end{split}
\end{equation*}
Since $\|f\|_{L^{\infty}([0, \infty[; H^2_xL^{2}_v)}$ small enough, for all $0<\epsilon<1$,
$$\|f\|_{L^{\infty}([0, \infty[; H^2_xL^{2}_v)}<\epsilon,$$
taking $C_{2}\epsilon\le\frac18$, we have
\begin{equation*}
\begin{split}
     &\frac{d}{dt}\|f(t)\|^{2}_{H^{2}_{x}L^{2}_{v}}+|||f|||^{2}_{H^{2}_{x}, \sigma}\le2(\tilde c_{0}+2(\tilde C_{2})^{2})\|f\|^{2}_{H^{2}_{x}L^{2}_{v}}.
\end{split}
\end{equation*}
For all $0<t\le T$, integrating from~0~ to $t$, it follows that
\begin{equation}\label{integration}
\begin{split}
     &\|f(t)\|^{2}_{H^{2}_{x}L^{2}_{v}}+\int_{0}^{t}|||f(\tau)|||^{2}_{H^{2}_{x}, \sigma}d\tau\\
     &\le\|f_{0}\|^{2}_{H^{2}_{x}L^{2}_{v}}+2(\tilde c_{0}+2(\tilde C_{2})^{2})\int_{0}^{t}\|f(\tau)\|^{2}_{H^{2}_{x}L^{2}_{x}}d\tau,
\end{split}
\end{equation}
 by Gronwall inequality, we get for all $0<t\le T$
$$\|f(t)\|^{2}_{H^{2}_{x}L^{2}_{v}}\le\left(1+2T(\tilde c_{0}+2(\tilde C_{2})^{2})e^{2T(\tilde c_{0}+2T(\tilde C_{2})^{2})}\right)\|f_{0}\|^{2}_{H^{2}_{x}L^{2}_{v}},$$
plugging it back into \eqref{integration}, and taking
$$B=\left(1+2T(\tilde c_{0}+2(\tilde C_{2})^{2})\right)e^{2T(\tilde c_{0}+2(\tilde C_{2})^{2})},$$
it follows that for all $0<t\le T$
\begin{equation*}
\begin{split}
     &\|f(t)\|^{2}_{H^{2}_{x}L^{2}_{v}}+\int_{0}^{t}|||f(\tau)|||^{2}_{H^{2}_{x}, \sigma}d\tau\le B^{2}\epsilon^{2}.
\end{split}
\end{equation*}
It remains to consider \eqref{k=1}. From \eqref{1-2} and \eqref{keyob}, we have
\begin{equation*}
        \begin{split}
            &\frac12\frac{d}{dt}\|H_{\delta}f\|^2_{H^2_xL^{2}_v}+(H_{\delta}\mathcal L_1f,  \ H_{\delta}f)_{H^2_xL^{2}_v}\\
            &=-\delta t^{\delta-1}( A_{+, 1}f, H_{\delta} f)_{H^2_xL^{2}_v}-(H_{\delta} \mathcal L_2f,  H_{\delta}f)_{H^2_xL^{2}_v}+(H_{\delta}\Gamma(f, f),  H_{\delta}f)_{H^2_xL^{2}_v}.
        \end{split}
    \end{equation*}
By using \eqref{L-1-n-0} with $\varepsilon=\frac18$, one has
\begin{equation*}
\begin{split}
     (H_{\delta}\mathcal L_{1}f,\ H_{\delta}f)_{H^{2}_{x}L^{2}_{v}}&\ge\frac78|||H_{\delta}f|||_{H^{2}_{x}, \sigma}^{2}-C_{5}\left\|\langle v\rangle^{\frac\gamma2}H_{\delta}f\right\|^{2}_{H^{2}_{x}L^{2}_{v}}-C_{5}t^{\delta}|||f|||^{2}_{H^{2}_{x}, \sigma},
\end{split}
\end{equation*}
applying \eqref{interpolation} with $g=H_\delta f$ and $C_{5}\eta=\frac18$, then it follows that
\begin{equation*}
\begin{split}
     (H_{\delta}\mathcal L_{1}f,\ H_{\delta}f)_{H^{2}_{x}L^{2}_{v}}&\ge\frac34|||H_{\delta}f|||_{H^{2}_{x}, \sigma}^{2}-\tilde C_{5}\left\|H_{\delta}f\right\|^{2}_{H^{2}_{x}L^{2}_{v}}-C_{5}t^{\delta}|||f|||^{2}_{H^{2}_{x}, \sigma}.
\end{split}
\end{equation*}
Since $\gamma\ge0$, by using \eqref{H}, we have
\begin{equation}\label{g1}
\left\|A_{+, 1}g(x, \cdot)\right\|^{2}_{L^2_{v}}\le \frac{1}{C_{1}}\left(|||g(x, \cdot)|||^{2}_{\sigma}+\left\|g(x, \cdot)\right\|^{2}_{L^2_{v}}\right), \quad \forall \ g\in H^\infty(\mathbb T^3_x; \mathcal S(\mathbb R^3_v)),
\end{equation}
applying \eqref{g1} with $g=\partial^\alpha_xf$, then we can obtain that
\begin{equation*}
\begin{split}
     &\left|\delta t^{\delta-1}( A_{+, 1}f, H_{\delta} f)_{H^2_xL^{2}_v}\right|\\
     &\le \delta t^{\delta-1}\sum_{|\alpha|\le2}\int_{\mathbb T^{3}_{x}}\|\partial^\alpha_x A_{+, 1}f(x, \cdot)\|_{L^{2}_v}\|H_{\delta}\partial^\alpha_x f(x, \cdot)\|_{L^{2}_v}d\tau\\
     &\le \frac{t^{2(\delta-1)}\delta^{2}}{4C_{1}}\left(|||f|||_{H^{2}_{x}, \sigma}^{2}+\|f\|^{2}_{H^{2}_{x}L^{2}_{v}}\right)+\|H_\delta f\|^2_{H^{2}_{x}L^2_{v}}.
\end{split}
\end{equation*}
Applying Proposition \ref{H-m-Gamma} with $\delta_{1}=\delta$ and $n=0$, it follows that
\begin{equation*}
\begin{split}
     |(H_{\delta}\Gamma(f, f),  H_{\delta}f)_{H^2_xL^{2}_v}|
     &\le C_{2}\|f\|_{H^{2}_{x}L^{2}_{v}}|||H_{\delta}f|||^{2}_{H^{2}_{x}, \sigma}\\
     &\quad+C_{2}\|H_{\delta}f\|_{H^{2}_{x}L^{2}_{v}}|||f|||_{H^{2}_{x}, \sigma}|||H_{\delta}f|||_{H^{2}_{x}, \sigma}.
\end{split}
\end{equation*}
By using Corollary \ref{corollary1} with $\delta_{1}=\delta$ and $n=0$, it follows that
\begin{equation*}
\begin{split}
     &|(H_{\delta}\mathcal L_{2}f,\ H_{\delta}f)_{H^{2}_{x}L^{2}_{v}}|\le C_{3}\sqrt{C_{0}}t^{\delta}\|f\|_{H^{2}_{x}L^{2}_{v}}|||H_{\delta}f|||_{H^{2}_{x}, \sigma}+C_{3}\|H_{\delta}f\|_{H^{2}_{x}L^{2}_{v}}|||H_{\delta}f|||_{H^{2}_{x}, \sigma}.
\end{split}
\end{equation*}
Combining these inequalities it follows that
     \begin{equation*}
        \begin{split}
            &\frac12\frac{d}{dt}\|H_{\delta}f\|^2_{H^2_xL^{2}_v}+\frac34|||H_{\delta}f|||_{H^{2}_{x}, \sigma}^{2}\\
            &\le (\tilde C_{5}+1)\|H_{\delta}f\|^{2}_{H^{2}_{x}L^{2}_{v}}+C_{5}t^{\delta}|||f|||^{2}_{H^{2}_{x}, \sigma}+\frac{t^{2(\delta-1)}\delta^{2}}{4C_{1}}\left(|||f|||_{H^{2}_{x}, \sigma}^{2}+\|f\|^{2}_{H^{2}_{x}L^{2}_{v}}\right)\\
            &\quad+C_{3}\sqrt{C_{0}}t^{\delta}\|f\|_{H^{2}_{x}L^{2}_{v}}|||H_{\delta}f|||_{H^{2}_{x}, \sigma}+C_{3}\|H_{\delta}f\|_{H^{2}_{x}L^{2}_{v}}|||H_{\delta}f|||_{H^{2}_{x}, \sigma}\\
            &\quad+C_{2}\|f\|_{H^{2}_{x}L^{2}_{v}}|||H_{\delta}f|||^{2}_{H^{2}_{x}, \sigma}+C_{2}\|H_{\delta}f\|_{H^{2}_{x}L^{2}_{v}}|||f|||_{H^{2}_{x}, \sigma}|||H_{\delta}f|||_{H^{2}_{x}, \sigma},
        \end{split}
    \end{equation*}
    for all $0<t\le T$, integrating from~0~ to $t$, then from \eqref{k=0} and Cauchy-Schwarz inequality, one can get
    \begin{equation*}
        \begin{split}
            &\frac12\|H_{\delta}f(t)\|^2_{H^2_xL^{2}_v}+\frac12\int_{0}^{t}|||H_{\delta}f(\tau)|||_{H^{2}_{x}, \sigma}^{2}d\tau\\
            &\le (\tilde C_{5}+1+(C_{3})^{2})\int_{0}^{t}\|H_{\delta}f(\tau)\|^{2}_{H^{2}_{x}L^{2}_{v}}d\tau+C_{2}B\epsilon\int_{0}^{t}|||H_{\delta}f(\tau)|||^{2}_{H^{2}_{x}, \sigma}d\tau\\
            &\quad +C_{3}\sqrt{C_{0}T}(T+1)^{\delta}B\epsilon\left(\int_{0}^{t}|||H_{\delta}f(\tau)|||^{2}_{H^{2}_{x}, \sigma}d\tau\right)^{\frac12}\\
            &\quad +C_{2}\int_{0}^{t}\|H_{\delta}f(\tau)\|_{H^{2}_{x}L^{2}_{v}}|||f(\tau)|||_{H^{2}_{x}, \sigma}|||H_{\delta}f(\tau)|||_{H^{2}_{x}, \sigma}d\tau\\
            &\quad+\left(C_{5}(T+1)^{\delta}+\frac{\delta^{2}(T+1)^{2\delta-1}}{4C_{1}}\right)B^{2}\epsilon^{2}.
        \end{split}
    \end{equation*}
    By using Cauchy-Schwarz inequality, one has
\begin{equation*}
\begin{split}
     &C_{3}\sqrt{C_{0}T}(T+1)^{\delta}B\epsilon\left(\int_{0}^{t}|||H_{\delta}f(\tau)|||^{2}_{H^{2}_{x}, \sigma}d\tau\right)^{\frac12}\\
     &\le\frac{1}{16}\int_{0}^{t}|||H_{\delta}f(\tau)|||^{2}_{H^{2}_{x}, \sigma}d\tau+4(T+1)^{2\delta}C_{0}T(C_{3})^{2}B^{2}\epsilon^{2},
\end{split}
\end{equation*}
and
\begin{equation*}
\begin{split}
     &C_{2}\int_{0}^{t}\|H_{\delta}f(\tau)\|_{H^{2}_{x}L^{2}_{v}}|||f(\tau)|||_{H^{2}_{x}, \sigma}|||H_{\delta}f(\tau)|||_{H^{2}_{x}, \sigma}d\tau\\
     &\le\frac{1}{16}\int_{0}^{t}|||H_{\delta}f(\tau)|||^{2}_{H^{2}_{x}, \sigma}d\tau+4(C_{2})^{2}B^{2}\epsilon^{2}\|H_{\delta}f\|^{2}_{L^{\infty}(]0, t]; H^{2}_{x}L^{2}_{v})}.
\end{split}
\end{equation*}
Combining these inequalities and taking $C_{2}B\epsilon\le\frac18$, it follows that for all $0<t\le T$
    \begin{equation*}
        \begin{split}
            &\|H_{\delta}f(t)\|^2_{H^2_xL^{2}_v}+\frac12\int_{0}^{t}|||H_{\delta}f(\tau)|||_{H^{2}_{x}, \sigma}^{2}d\tau\\
            &\le 2(\tilde C_{5}+1+(C_{3})^{2})\int_{0}^{t}\|H_{\delta}f(\tau)\|^{2}_{H^{2}_{x}L^{2}_{v}}d\tau+2\tilde C_{3}B^{2}\epsilon^{2}+\frac{1}{16}\|H_{\delta}f\|^{2}_{L^{\infty}(]0, t]; H^{2}_{x}L^{2}_{v})},
        \end{split}
    \end{equation*}
    with
    $$\tilde C_{3}=4(T+1)^{2\delta}C_{0}T(C_{3})^{2}+C_{5}(T+1)^{\delta}+\frac{\delta^{2}(T+1)^{2\delta-1}}{4C_{1}}.$$
    Then we can get that for all $T>0$
    \begin{equation*}
        \begin{split}
            &\frac12\|H_{\delta}f\|^2_{L^{\infty}(]0, T]; H^{2}_{x}L^{2}_{v})}\\
            &\le 2(\tilde C_{5}+1+(C_{3})^{2})\int_{0}^{T}\|H_{\delta}f(\tau)\|^{2}_{H^{2}_{x}L^{2}_{v}}d\tau+2\tilde C_{3}B^{2}\epsilon^{2},
        \end{split}
    \end{equation*}
    so that for all $0<t\le T$
    \begin{equation}\label{4-5}
        \begin{split}
            &\|H_{\delta}f(t)\|^2_{H^2_xL^{2}_v}+\int_{0}^{t}|||H_{\delta}f(\tau)|||_{H^{2}_{x}, \sigma}^{2}d\tau\\
            &\le 4(\tilde C_{5}+1+(C_{3})^{2})\int_{0}^{t}\|H_{\delta}f(\tau)\|^{2}_{H^{2}_{x}L^{2}_{v}}d\tau+4\tilde C_{3}B^{2}\epsilon^{2}.
        \end{split}
    \end{equation}
    By using Gronwall inequality, we have for all $0 < t \le T$
    \begin{equation*}
    \begin{split}
         \|H_{\delta}f(t)\|^2_{H^2_xL^{2}_v}&\le4\tilde C_{3}B^{2}\epsilon^{2}\left(1+4T(\tilde C_{5}+1+(C_{3})^{2})e^{4T(\tilde C_{5}+1+(C_{3})^{2})}\right),
    \end{split}
    \end{equation*}
    plugging it back into \eqref{4-5}, then we can get that for all $0<t\le T$
    \begin{equation*}
        \begin{split}
            &\|H_{\delta}f(t)\|^2_{H^2_xL^{2}_v}+\int_{0}^{t}|||H_{\delta}f(\tau)|||_{H^{2}_{x}, \sigma}^{2}d\tau\le \tilde B^{2}\epsilon^{2},
        \end{split}
    \end{equation*}
here
$$\tilde B\ge2B\left(1+4T(\tilde C_{5}+1+(C_{3})^{2})e^{4T(\tilde C_{5}+1+(C_{3})^{2})}\right)\sqrt{\tilde C_{3}}.$$
\end{proof}

\begin{prop}\label{prop 4.2}
    For $\gamma\ge0$. Let $f$ be the smooth solution of Cauchy problem \eqref{1-2} with $\|f\|_{L^{\infty}([0, \infty[; H^2_xL^{2}_v)}$ small enough. Then for all $T>0$ and $\delta_{1}, \delta_{2}>\frac32$, there exists a constant $A>0$ such that for any $0<t\le T$ and $m\in\mathbb N$, $n\in\mathbb N_{+}$
    \begin{equation}\label{4-1-mn}
    \begin{split}
       &\|H^{m}_{\delta_{1}}H^{n}_{\delta_{2}}f(t)\|^2_{H^2_xL^{2}_v}+\int_0^t||| H^{m}_{\delta_{1}}H^{n}_{\delta_{2}}f(\tau)|||^2_{H^2_x, \sigma} d\tau\\
       &\le \left(A^{m+n-\frac 12}(m-2)!(n-2)!\right)^2,
    \end{split}
    \end{equation}
    here the constant $A$ depends on $\gamma, \delta_{1}, \delta_{2}, C_{0}-C_{4}$ and $T$.
\end{prop}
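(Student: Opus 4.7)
The plan is to establish \eqref{4-1-mn} by a double induction on $(m,n)$. The base cases with $m+n$ small will follow directly from Lemma \ref{lemma4.1}: the estimate \eqref{k=0} supplies the $m=n=0$ case and \eqref{k=1} (with $\delta = \delta_2$, and the symmetric analogue with $\delta = \delta_1$) handles $m+n \leq 1$; iterating the same argument as in Lemma \ref{lemma4.1} with one extra factor of $H_{\delta_1}$ or $H_{\delta_2}$ covers $m+n \leq 3$. The constant $A$ will be fixed in the end, large enough to absorb $\tilde B$ and the leftover constants, with the convention that $(m-2)!(n-2)!$ is replaced by $1$ when $m \le 2$ or $n \le 2$.

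For the induction step, assume \eqref{4-1-mn} at all $(m',n')$ with $m'+n' < m+n$. Applying $H^m_{\delta_1} H^n_{\delta_2}$ to \eqref{1-2}, using $[H_{\delta_1}, H_{\delta_2}] = 0$ and \eqref{kehigher} twice, I would obtain
\begin{equation*}
(\partial_t + v\cdot\partial_x) H^m_{\delta_1} H^n_{\delta_2} f + \mathcal{L}(H^m_{\delta_1} H^n_{\delta_2} f) = H^m_{\delta_1} H^n_{\delta_2} \Gamma(f,f) - m\delta_1 t^{\delta_1-1} A_{+,1} H^{m-1}_{\delta_1} H^n_{\delta_2} f - n\delta_2 t^{\delta_2-1} A_{+,1} H^m_{\delta_1} H^{n-1}_{\delta_2} f.
\end{equation*}
Pairing with $H^m_{\delta_1} H^n_{\delta_2} f$ in $H^2_x L^2_v$ kills the transport term and produces the usual energy identity with right-hand side $R_\Gamma + R_{\mathcal L_2} + R_{\mathrm{com}}$ collecting the three remaining contributions.

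The next step is to apply Proposition \ref{H-m-L-1} to bound the coercive $\mathcal L_1$ term from below by $\frac{3}{4}|||H^m_{\delta_1} H^n_{\delta_2} f|||^2_{H^2_x,\sigma}$ modulo a weighted $L^2$ term (absorbed via \eqref{interpolation}) and explicit combinatorial error sums. Then Proposition \ref{H-m-Gamma} handles $R_\Gamma$ and Corollary \ref{corollary1} handles $R_{\mathcal L_2}$. In each case I would isolate the extreme indices: the boundary terms where all derivatives fall on the rightmost factor are of the form $\|f\|_{H^2_x L^2_v} |||H^m_{\delta_1} H^n_{\delta_2} f|||^2_{H^2_x,\sigma}$, absorbed into the dissipation thanks to the smallness assumption $\|f\|_{L^\infty_t H^2_x L^2_v} \le \epsilon$. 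The commutator remainders $R_{\mathrm{com}}$ involve only strictly lower orders via \eqref{g1}. After integrating in $\tau \in [0,t]$, applying Cauchy-Schwarz/Young to the cross terms, and absorbing the top-order dissipation, I would arrive at an inequality
\begin{equation*}
\|H^m_{\delta_1} H^n_{\delta_2} f(t)\|^2_{H^2_x L^2_v} + \frac{1}{2}\int_0^t |||H^m_{\delta_1} H^n_{\delta_2} f(\tau)|||^2_{H^2_x,\sigma} d\tau \;\leq\; C\int_0^t \|H^m_{\delta_1} H^n_{\delta_2} f\|^2_{H^2_x L^2_v}\, d\tau + \mathcal{E}_{m,n},
\end{equation*}
and conclude via Gronwall once $\mathcal{E}_{m,n}$ is controlled.

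The main obstacle, then, is the combinatorial estimate on $\mathcal{E}_{m,n}$. A typical summand reads
\begin{equation*}
\bigl(C_m^l C_n^p (l+p)^{[\gamma]/2+2}\bigr)^2 (l+p)!\, T^{2(\delta_1 l + \delta_2 p)}\, \bigl(A^{m+n-l-p-\frac{1}{2}}(m-l-2)!(n-p-2)!\bigr)^2,
\end{equation*}
after inserting the induction hypothesis. The goal is to prove
\begin{equation*}
\mathcal{E}_{m,n} \;\leq\; \tfrac{1}{4}\bigl(A^{m+n-\frac 12}(m-2)!(n-2)!\bigr)^2
\end{equation*}
for $A$ sufficiently large in terms of $\gamma, \delta_1, \delta_2, T$ and $C_0$--$C_4$. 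The payoff factor $A^{-2(l+p)}$ arising from comparing the right- and left-hand sides must dominate the combinatorial blow-up $(C_m^l)^2 (l+p)! (m-l-2)!^2 / (m-2)!^2$, which via Stirling-type bounds behaves like $C^{l+p}(l+p)!/(l!\,p!)$ up to polynomial factors. The resulting sum is geometric-type and converges for $A$ large, but verifying this requires careful bookkeeping, and it is presumably this calculation that is deferred to the technical lemmas of Section \ref{s5}.
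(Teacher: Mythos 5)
Your proposal follows essentially the same route as the paper: induction on $m+n$ with base case from Lemma \ref{lemma4.1}, the commutator identity \eqref{kehigher} applied to each factor, coercivity from Proposition \ref{H-m-L-1} combined with \eqref{interpolation}, the trilinear and $\mathcal L_2$ bounds from Proposition \ref{H-m-Gamma} and Corollary \ref{corollary1} with top-order terms absorbed by smallness, and finally Gronwall plus the combinatorial summation (the paper's $\sum_l C_m^l(l-2)!(m-l-2)!\le 24(m-2)!$, $\sqrt{l!}\le 3(l-2)!$, and $(p+q)!\le 2^{p+q}p!q!$) deferred to Section \ref{s5}. One small imprecision: in your displayed equation the linear term should read $H^m_{\delta_1}H^n_{\delta_2}\mathcal L f$ rather than $\mathcal L(H^m_{\delta_1}H^n_{\delta_2} f)$, since $H_\delta$ does not commute with $\mathcal L$ — but your subsequent appeal to Proposition \ref{H-m-L-1} and Corollary \ref{corollary1}, which are stated for $H^m_{\delta_1}H^n_{\delta_2}\mathcal L_j f$ and already contain those commutator sums, shows you are handling this correctly.
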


We prove this proposition by induction on index $m+n=k$. For the case of $m+n=k=1$, taking $(\tilde B\epsilon)^{2}\le A$, then from \eqref{k=1}, one can deduce \eqref{4-1-mn} holds. By convention, we denote $k!=1$ if $k\le 0$. Assume $k\ge2$, and for all $1\le m+n\le k-1$, $m, n\in\mathbb N$,
\begin{equation}\label{hypothesis k-1}
\begin{split}
     &\|H^{m}_{\delta_{1}}H^{n}_{\delta_{2}}f(t)\|^2_{H^2_xL^{2}_v}+\int_0^t||| H^{m}_{\delta_{1}}H^{n}_{\delta_{2}}f(\tau)|||^2_{H^2_x, \sigma} d\tau\\
     &\le \left( A^{m+n-\frac 12 }(m-2)!(n-2)!\right)^2, \quad \forall \ 0< t\le T, \ \delta_{1}, \ \delta_{2}>\frac32.
\end{split}
\end{equation}

      Now, we would prove that \eqref{hypothesis k-1} holds for all $m, n\in\mathbb N$ with $m+n=k$.

      From \eqref{1-2}, we have
\begin{equation*}
\begin{split}
     &(\partial_t+v\ \cdot\ \partial_x)H^{m}_{\delta_{1}}H^{n}_{\delta_{2}}f+[H^{m}_{\delta_{1}}H^{n}_{\delta_{2}}, \partial_t+v\ \cdot\ \partial_x]f+H^{m}_{\delta_{1}}H^{n}_{\delta_{2}}\mathcal L_{1}f\\
     &=H^{m}_{\delta_{1}}H^{n}_{\delta_{2}}\Gamma(f, f)-H^{m}_{\delta_{1}}H^{n}_{\delta_{2}}\mathcal L_{2}f,
\end{split}
\end{equation*}
then taking the scalar product in $H^2_xL^{2}_v$ with respect to $H^{m}_{\delta_1}H^n_{\delta_2}f$, we can obtain
\begin{equation*}
\begin{split}
     &\frac12\frac{d}{dt}\left\|H^{m}_{\delta_{1}}H^{n}_{\delta_{2}}f\right\|^{2}_{H_{x}^{2}L^{2}_{v}}+\left(H^{m}_{\delta_{1}}H^{n}_{\delta_{2}}\mathcal L_{1}f, H^{m}_{\delta_{1}}H^{n}_{\delta_{2}}f\right)_{H^{2}_{x}L^{2}_{v}}\\
     &=-\left(\left[H^{m}_{\delta_{1}}H^{n}_{\delta_{2}}, \partial_t+v\ \cdot\ \partial_x\right]f, H^{m}_{\delta_{1}}H^{n}_{\delta_{2}}f\right)_{H^{2}_{x}L^{2}_{v}}-\left(H^{m}_{\delta_{1}}H^{n}_{\delta_{2}}\mathcal L_{2}f, H^{m}_{\delta_{1}}H^{n}_{\delta_{2}}f\right)_{H^{2}_{x}L^{2}_{v}}\\
     &\quad+\left(H^{m}_{\delta_{1}}H^{n}_{\delta_{2}}\Gamma(f, f), H^{m}_{\delta_{1}}H^{n}_{\delta_{2}}f\right)_{H^{2}_{x}L^{2}_{v}}.
\end{split}
\end{equation*}
If $m=0, n=k$ or $n=0, m=k$, then the commutator in the above formula has been given in \eqref{kehigher}. If $m, n\ge1$, then by applying \eqref{kehigher}, we can deduce that
\begin{equation*}
\begin{split}
     &[H^{m}_{\delta_{1}}H^{n}_{\delta_{2}}, \partial_t+v\ \cdot\ \partial_x]\\
     &=H^{n}_{\delta_{2}}[H^{m}_{\delta_{1}}, \partial_t+v\ \cdot\ \partial_x]+[H^{n}_{\delta_{2}}, \partial_t+v\ \cdot\ \partial_x]H^{m}_{\delta_{1}}\\
     &=\delta_{1}mt^{\delta_{1}-1}A_{+, 1}H^{m-1}_{\delta_{1}}H^{n}_{\delta_{2}}+\delta_{2}nt^{\delta_{2}-1}A_{+, 1}H^{m}_{\delta_{1}}H^{n-1}_{\delta_{2}},
\end{split}
\end{equation*}
For simplicity of the presentation, we consider the case of $m, n\ge1$ with $m+n=k$. In the case of $m=0, n=k$ and $n=0, m=k$, the proof is similar and quite easy.

Applying \eqref{g1} and Cauchy-Schwarz inequality, we can obtain that
\begin{equation*}
\begin{split}
     &\left|\left(\left[H^{m}_{\delta_{1}}H^{n}_{\delta_{2}}, \partial_t+v\ \cdot\ \partial_x\right]f, H^{m}_{\delta_{1}}H^{n}_{\delta_{2}}f\right)_{H^{2}_{x}L^{2}_{v}}\right|\\
     &\le\frac{(\delta_{1}mt^{\delta_{1}-1})^{2}}{8C_{1}}\left(|||H^{m-1}_{\delta_{1}}H^{n}_{\delta_{2}}f|||^{2}_{H^{2}_{x}, \sigma}+\|H^{m-1}_{\delta_{1}}H^{n}_{\delta_{2}}f\|^{2}_{H^{2}_{x}L^{2}_{v}}\right)+\|H^{m}_{\delta_{1}}H^{n}_{\delta_{2}}f\|^{2}_{H^{2}_{x}L^{2}_{v}}\\
     &\quad+\frac{(\delta_{2}nt^{\delta_{2}-1})^{2}}{8C_{1}}\left(|||H^{m}_{\delta_{1}}H^{n-1}_{\delta_{2}}f|||^{2}_{H^{2}_{x}, \sigma}+\|H^{m}_{\delta_{1}}H^{n-1}_{\delta_{2}}f\|^{2}_{H^{2}_{x}L^{2}_{v}}\right).
\end{split}
\end{equation*}
Since $\gamma\ge0$, then from corollary 1 of~\cite{G-1}, we have
\begin{equation*}
\begin{split}
     &\|H^{m-1}_{\delta_{1}}H^{n}_{\delta_{2}}f\|^{2}_{H^{2}_{x}L^{2}_{v}}
     \le\frac{1}{C_{1}}||| H^{m-1}_{\delta_{1}}H^{n}_{\delta_{2}}f|||^{2}_{H^{2}_{x}, \sigma},
\end{split}
\end{equation*}
and
\begin{equation*}
\begin{split}
     &\|H^{m}_{\delta_{1}}H^{n-1}_{\delta_{2}}f\|^{2}_{H^{2}_{x}L^{2}_{v}}
     \le\frac{1}{C_{1}}||| H^{m}_{\delta_{1}}H^{n-1}_{\delta_{2}}f|||^{2}_{H^{2}_{x}, \sigma},
\end{split}
\end{equation*}
so that
\begin{equation*}
\begin{split}
     &\left|\left([H^{m}_{\delta_{1}}H^{n}_{\delta_{2}}, \partial_t+v\ \cdot\ \partial_x]f, H^{m}_{\delta_{1}}H^{n}_{\delta_{2}}f\right)_{H^{2}_{x}L^{2}_{v}}\right|
     \le\|H^{m}_{\delta_{1}}H^{n}_{\delta_{2}}f\|^{2}_{H^{2}_{x}L^{2}_{v}}+R_{1}(t),
\end{split}
\end{equation*}
with
\begin{equation}\label{R-1}
\begin{split}
     R_{1}(t)&=\left(1+\frac{1}{C_{1}}\right)\frac{(\delta_{1}mt^{\delta_{1}-1})^{2}}{8C_{1}}|||H^{m-1}_{\delta_{1}}H^{n}_{\delta_{2}}f|||^{2}_{H^{2}_{x}, \sigma}\\
     &\quad+\left(1+\frac{1}{C_{1}}\right)\frac{(\delta_{2}nt^{\delta_{2}-1})^{2}}{8C_{1}}|||H^{m}_{\delta_{1}}H^{n-1}_{\delta_{2}}f|||^{2}_{H^{2}_{x}, \sigma}.
\end{split}
\end{equation}
Since $m, n\ge1$, from Proposition \ref{H-m-Gamma}, we can obtain that
\begin{equation*}
\begin{split}
     &\left|\left(H^{m}_{\delta_{1}}H^{n}_{\delta_{2}}\Gamma(f, f),\ H^{m}_{\delta_{1}}H^{n}_{\delta_{2}}f\right)_{H^{2}_{x}L^{2}_{v}}\right|\\
     &\le C_{2}\|f\|_{H^{2}_{x}L^{2}_{v}}|||H^{m}_{\delta_{1}}H^{n}_{\delta_{2}}f|||^{2}_{H^{2}_{x}, \sigma}\\
     &\qquad+C_{2}\|H^{m}_{\delta_{1}}H^{n}_{\delta_{2}}f\|_{H^{2}_{x}L^{2}_{v}}|||f|||_{H^{2}_{x}, \sigma}|||H^{m}_{\delta_{1}}H^{n}_{\delta_{2}}f|||_{H^{2}_{x}, \sigma}+R_{2}(t),
\end{split}
\end{equation*}
with
\begin{equation}\label{R-2}
\begin{split}
     R_{2}(t)&=C_{2}\sum_{l=1}^{m}\sum_{p=0}^{n-1}C_{m}^{l}C_{n}^{p}\|H^{l}_{\delta_{1}}
     H^{p}_{\delta_{2}}f\|_{H^{2}_{x}L^{2}_{v}}\\
     &\qquad\qquad\qquad\qquad\times |||H^{m-l}_{\delta_{1}}H^{n-p}_{\delta_{2}}f|||_{H^{2}_{x}, \sigma}|||H^{m}_{\delta_{1}}H^{n}_{\delta_{2}}f|||_{H^{2}_{x}, \sigma}\\
     &\quad+C_{2}\sum_{l=1}^{m-1}C_{m}^{l}\|H^{l}_{\delta_{1}}H^{n}_{\delta_{2}}f\|_{H^{2}_{x}L^{2}_{v}}|||H^{m-l}_{\delta_{1}}f|||_{H^{2}_{x}, \sigma}|||H^{m}_{\delta_{1}}H^{n}_{\delta_{2}}f|||_{H^{2}_{x}, \sigma}\\
     &\quad+C_{2}\sum_{p=1}^{n}C_{n}^{p}\|H^{p}_{\delta_{2}}f\|_{H^{2}_{x}L^{2}_{v}}|||H^{m}_{\delta_{1}}H^{n-p}_{\delta_{2}}f|||_{H^{2}_{x}, \sigma}|||H^{m}_{\delta_{1}}H^{n}_{\delta_{2}}f|||_{H^{2}_{x}, \sigma}.
\end{split}
\end{equation}
Since $m, n\ge1$, from Corollary \ref{corollary1}, by using Cauchy-Schwarz inequality, we have
\begin{equation*}
\begin{split}
     &|(H^{m}_{\delta_{1}}H^{n}_{\delta_{2}}\mathcal L_{2}f,\ H^{m}_{\delta_{1}}H^{n}_{\delta_{2}}f)_{H^{2}_{x}L^{2}_{v}}|\\
     &\le4(C_{3})^{2}\|H^{m}_{\delta_{1}}H^{n}_{\delta_{2}}f\|^{2}_{H^{2}_{x}L^{2}_{v}}+\frac{1}{16}|||H^{m}_{\delta_{1}}H^{n}_{\delta_{2}}f|||^{2}_{H^{2}_{x}, \sigma}+R_{3}(t),
\end{split}
\end{equation*}
with
\begin{equation}\label{R-3}
\begin{split}
     &R_{3}(t)=C_{3}\sum_{l=0}^{m-1}\sum_{p=0}^{n}C_{m}^{l}C^{p}_{n}\left(\sqrt{C_{0}}t^{\delta_{1}}\right)^{m-l}\left(\sqrt{C_{0}}t^{\delta_{2}}\right)^{n-p}\\
     &\qquad \qquad\quad\times\sqrt{(m-l+n-p)!}\|H^{l}_{\delta_{1}}H^{p}_{\delta_{2}}f\|_{H^{2}_{x}L^{2}_{v}}|||H^{m}_{\delta_{1}}H^{n}_{\delta_{2}}f|||_{H^{2}_{x}, \sigma}\\
     &\quad+C_{3}\sum_{p=0}^{n-1}C^{p}_{n}\left(\sqrt{C_{0}}t^{\delta_{2}}\right)^{n-p}\sqrt{(n-p)!}\|H^{m}_{\delta_{1}}H^{p}_{\delta_{2}}f\|_{H^{2}_{x}L^{2}_{v}}|||H^{m}_{\delta_{1}}H^{n}_{\delta_{2}}f|||_{H^{2}_{x}, \sigma}.
\end{split}
\end{equation}
Applying \eqref{interpolation} with $g=H^{m}_{\delta_{1}}H^{n}_{\delta_{2}}f$ and $C_{4}\eta=\frac{1}{16}$, one has
$$C_{4}\|\langle v\rangle^{\frac\gamma2}H^{m}_{\delta_{1}}H^{n}_{\delta_{2}}f\|^{2}_{H^{2}_{x}L^{2}_{v}}\le\frac{1}{16}|||H^{m}_{\delta_{1}}H^{n}_{\delta_{2}}f|||^{2}_{H^{2}_{x}, \sigma}+\tilde C_{4}\|H^{m}_{\delta_{1}}H^{n}_{\delta_{2}}f\|^{2}_{H^{2}_{x}L^{2}_{v}},$$
so that from Proposition \ref{H-m-L-1}, we can get that
\begin{equation*}
\begin{split}
&\left(H^{m}_{\delta_{1}}H^{n}_{\delta_{2}}\mathcal L_{1}f,\ H^{m}_{\delta_{1}}H^{n}_{\delta_{2}}f\right)_{H^{2}_{x}L^{2}_{v}}\\
     &\ge\frac{11}{16}|||H^{m}_{\delta_{1}}H^{n}_{\delta_{2}}f|||_{H^{2}_{x}, \sigma}^{2}-\tilde C_{4}\|H^{m}_{\delta_{1}}H^{n}_{\delta_{2}}f\|^{2}_{H^{2}_{x}L^{2}_{v}}-R_{4}(t),
\end{split}
\end{equation*}
with
\begin{equation}\label{R-4}
\begin{split}
&R_{4}(t)=C_{4}\sum_{l=0}^{m}C_{m}^{l}l^{\frac{[\gamma]}{2}+1}\sqrt{l!}nt^{\delta_{1}l}t^{\delta_{2}}|||H^{m-l}_{\delta_{1}}H^{n-1}_{\delta_{2}}f|||_{H^{2}_{x}, \sigma}|||H^{m}_{\delta_{1}}H^{n}_{\delta_{2}}f|||_{H^{2}_{x}, \sigma}\\
     &\quad+C_{4}\sum_{p=1}^{n}C^{p}_{n}p^{\frac{[\gamma]}{2}+2}\sqrt{p!}t^{\delta_{2} p}|||H^{m}_{\delta_{1}}H^{n-p}_{\delta_{2}}f|||_{H^{2}_{x}, \sigma}|||H^{m}_{\delta_{1}}H^{n}_{\delta_{2}}f|||_{H^{2}_{x}, \sigma}\\
     &\quad+C_{4}\sum_{p=0}^{n}C_{n}^{p}p^{\frac{[\gamma]}{2}+1}\sqrt{p!}mt^{\delta_{1}}t^{\delta_{2}p}|||H^{m-1}_{\delta_{1}}H^{n-p}_{\delta_{2}}f|||_{H^{2}_{x}, \sigma}|||H^{m}_{\delta_{1}}H^{n}_{\delta_{2}}f|||_{H^{2}_{x}, \sigma}\\
     &\quad+C_{4}\sum_{l=0}^{m}\sum_{p=1}^{n}C_{n}^{p}C_{m}^{l}(p+l)^{\frac{[\gamma]}{2}+2}\sqrt{(p+l)!}\\
     &\qquad\qquad\times t^{\delta_{1} l}t^{\delta_{2} p}|||H^{m-l}_{\delta_{1}}H^{n-p}_{\delta_{2}}f|||_{H^{2}_{x}, \sigma}|||H^{m}_{\delta_{1}}H^{n}_{\delta_{2}}f|||_{H^{2}_{x}, \sigma}.
\end{split}
\end{equation}
Combining the above results, it follows that
\begin{equation*}
\begin{split}
     &\frac{d}{dt}\|H^{m}_{\delta_{1}}H^{n}_{\delta_{2}}f\|^{2}_{H_{x}^{2}L^{2}_{v}}+\frac54|||H^{m}_{\delta_{1}}H^{n}_{\delta_{2}}f|||_{H^{2}_{x}, \sigma}^{2}\\
     &\le 2(1+\tilde C_{4}+4(C_{3})^{2})\|H^{m}_{\delta_{1}}H^{n}_{\delta_{2}}f\|^{2}_{H^{2}_{x}L^{2}_{v}}+2C_{2}\|f\|_{H^{2}_{x}L^{2}_{v}}|||H^{m}_{\delta_{1}}H^{n}_{\delta_{2}}f|||^{2}_{H^{2}_{x}, \sigma}\\
     &\quad+2C_{2}\|H^{m}_{\delta_{1}}H^{n}_{\delta_{2}}f\|_{H^{2}_{x}L^{2}_{v}}|||f|||_{H^{2}_{x}, \sigma}|||H^{m}_{\delta_{1}}H^{n}_{\delta_{2}}f|||_{H^{2}_{x}, \sigma}\\
     &\quad+2R_{1}(t)+2R_{2}(t)+2R_{3}(t)+2R_{4}(t).
\end{split}
\end{equation*}
For all $0<t\le T$, integrating from~0~ to $t$, since $\|f\|_{L^{\infty}([0, \infty[; H^2_xL^{2}_v)}$ small enough,
 it follows that for all $0<\epsilon<1$
\begin{equation}\label{integration-k}
\begin{split}
    &\|H^{m}_{\delta_{1}}H^{n}_{\delta_{2}}f(t)\|^{2}_{H_{x}^{2}L^{2}_{v}}+\frac54\int_{0}^{t}|||H^{m}_{\delta_{1}}H^{n}_{\delta_{2}}f(\tau)|||_{H^{2}_{x}, \sigma}^{2}d\tau\\
    &\le C_{6}\int_{0}^{t}\|H^{m}_{\delta_{1}}H^{n}_{\delta_{2}}f(\tau)\|^{2}_{H^{2}_{x}L^{2}_{v}}d\tau+2C_{2}\epsilon\int_{0}^{t}|||H^{m}_{\delta_{1}}H^{n}_{\delta_{2}}f|||^{2}_{H^{2}_{x}, \sigma}d\tau\\
    &\quad+2C_{2}\int_{0}^{t}\|H^{m}_{\delta_{1}}H^{n}_{\delta_{2}}f(\tau)\|_{H^{2}_{x}L^{2}_{v}}|||f(\tau)|||_{H^{2}_{x}, \sigma}|||H^{m}_{\delta_{1}}H^{n}_{\delta_{2}}f(\tau)|||_{H^{2}_{x}, \sigma}d\tau\\
    &\quad+\int_0^tR_1(\tau)d\tau+\int_0^tR_2(\tau)d\tau+\int_0^tR_3(\tau)d\tau+\int_0^tR_4(\tau)d\tau,
\end{split}
\end{equation}
here $C_{6}=2(1+\tilde C_{4}+4(C_{3})^{2})$.

We estimate the terms of the right-hand side of \eqref{integration-k} by the following lemmas.
\begin{lemma}\label{lemma5.2}
     Assume that $f$ satisfies \eqref{k=0}, then for all $0<t\le T$
\begin{equation*}
\begin{split}
     &2C_{2}\int_{0}^{t}\|H^{m}_{\delta_{1}}H^{n}_{\delta_{2}}f(\tau)\|_{H^{2}_{x}L^{2}_{v}}|||f(\tau)|||_{H^{2}_{x}, \sigma}|||H^{m}_{\delta_{1}}H^{n}_{\delta_{2}}f(\tau)|||_{H^{2}_{x}, \sigma}d\tau\\
     &\le16(C_{2}B\epsilon)^{2}\|H^{m}_{\delta_{1}}H^{n}_{\delta_{2}}f\|^{2}_{L^{\infty}(]0, t]; H^{2}_{x}L^{2}_{v})}+\frac{1}{16}\int_{0}^{t}|||H^{m}_{\delta_{1}}H^{n}_{\delta_{2}}f(\tau)|||^{2}_{H^{2}_{x}, \sigma}d\tau.
\end{split}
\end{equation*}
\end{lemma}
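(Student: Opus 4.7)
The plan is a clean three-step estimate: reduce the cubic expression to a bilinear one by pulling out an $L^\infty$ factor in time, apply Cauchy--Schwarz in time, and close with Young's inequality.

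First, since $\|H^m_{\delta_1}H^n_{\delta_2}f(\tau)\|_{H^2_xL^2_v}$ is the only factor on the left that does not carry a dissipation norm, I would pull it outside the time integral in its $L^\infty(]0,t])$ norm, obtaining
\begin{equation*}
\int_0^t\|H^m_{\delta_1}H^n_{\delta_2}f(\tau)\|_{H^2_xL^2_v}|||f(\tau)|||_{H^2_x,\sigma}|||H^m_{\delta_1}H^n_{\delta_2}f(\tau)|||_{H^2_x,\sigma}d\tau\le\|H^m_{\delta_1}H^n_{\delta_2}f\|_{L^\infty(]0,t];H^2_xL^2_v)}\,\mathcal I(t),
\end{equation*}
where $\mathcal I(t):=\int_0^t|||f(\tau)|||_{H^2_x,\sigma}|||H^m_{\delta_1}H^n_{\delta_2}f(\tau)|||_{H^2_x,\sigma}d\tau$.

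Second, I would apply Cauchy--Schwarz in $\tau$ to $\mathcal I(t)$ and invoke the hypothesis \eqref{k=0}, which provides $\int_0^t|||f(\tau)|||^2_{H^2_x,\sigma}d\tau\le B^2\epsilon^2$, yielding
\begin{equation*}
\mathcal I(t)\le B\epsilon\left(\int_0^t|||H^m_{\delta_1}H^n_{\delta_2}f(\tau)|||^2_{H^2_x,\sigma}d\tau\right)^{1/2}.
\end{equation*}

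Finally, setting $X:=\|H^m_{\delta_1}H^n_{\delta_2}f\|_{L^\infty(]0,t];H^2_xL^2_v)}$ and $Y:=\bigl(\int_0^t|||H^m_{\delta_1}H^n_{\delta_2}f(\tau)|||^2_{H^2_x,\sigma}d\tau\bigr)^{1/2}$, the two previous steps reduce the left-hand side of the lemma to $2C_2B\epsilon\,XY$. A single application of the weighted Young inequality $2ab\le\alpha a^2+\alpha^{-1}b^2$ with $a=X$, $b=C_2B\epsilon\,Y$, $\alpha=16(C_2B\epsilon)^2$ then produces exactly $16(C_2B\epsilon)^2X^2+\tfrac{1}{16}Y^2$, matching the constants in the statement. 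No genuine obstacle arises; the only point to watch is the bookkeeping of constants, the coefficient $\tfrac{1}{16}$ on the dissipation term being chosen small enough to be absorbed later by the $\tfrac{5}{4}|||H^m_{\delta_1}H^n_{\delta_2}f|||^2_{H^2_x,\sigma}$ on the left-hand side of the main energy inequality \eqref{integration-k}.
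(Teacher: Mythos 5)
Your proposal is correct and coincides with the paper's own proof: both pull the $L^\infty(]0,t];H^2_xL^2_v)$ factor out of the integral, apply Cauchy--Schwarz in $\tau$ together with the bound $\int_0^t|||f(\tau)|||^2_{H^2_x,\sigma}d\tau\le B^2\epsilon^2$ from \eqref{k=0}, and close with the weighted Young inequality yielding exactly the constants $16(C_2B\epsilon)^2$ and $\tfrac{1}{16}$.
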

\begin{lemma}\label{lemma5.3}
     Assume that $f$ satisfies \eqref{hypothesis k-1} for $1\le m+n\le k-1$, then for all $0<t\le T$ and $m+ n=k$, we have
\begin{equation}\label{R-1+}
\int_{0}^{t}R_{1}(\tau)d\tau\le A_{1}(A^{m+n-\frac32}(m-2)!(n-2)!)^{2},
\end{equation}
with $A_{1}$ depends on $C_{1}, \delta_{1}, \delta_{2}, T$, and
\begin{equation}\label{R-2+}
\int_{0}^{t}R_{2}(\tau)d\tau\le A_{2}(A^{m+n-1}(m-2)!(n-2)!)^{2}+\frac{1}{16}\int_{0}^{t}|||H^{m}_{\delta_{1}}H^{n}_{\delta_{2}}f(\tau)|||^{2}_{H^{2}_{x}, \sigma}d\tau,
\end{equation}
with $A_{2}$ depends on $C_{2}, \delta_{1}, \delta_{2}, T$, and
\begin{equation}\label{R-3+}
\int_{0}^{t}R_{3}(\tau)d\tau\le A_{3}(A^{m+n-1}(m-2)!(n-2)!)^{2}+\frac{1}{32}\int_{0}^{t}|||H^{m}_{\delta_{1}}H^{n}_{\delta_{2}}f(\tau)|||^{2}_{H^{2}_{x}, \sigma}d\tau,
\end{equation}
with $A_{3}$ depends on $C_{0}, C_{3}, \delta_{1}, \delta_{2}, T$, and
\begin{equation}\label{R-4+}
\int_{0}^{t}R_{4}(\tau)d\tau\le A_{4}(A^{m+n-1}(m-2)!(n-2)!)^{2}+\frac{1}{32}\int_{0}^{t}|||H^{m}_{\delta_{1}}H^{n}_{\delta_{2}}f(\tau)|||^{2}_{H^{2}_{x}, \sigma}d\tau,
\end{equation}
with $A_{4}$ depends on $C_{4}, \gamma, \delta_{1}, \delta_{2}, T$, where $R_1(t), \cdots, R_4(t)$ are defined in \eqref{R-1}-\eqref{R-4}.
\end{lemma}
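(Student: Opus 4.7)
The strategy for all four bounds in Lemma \ref{lemma5.3} is identical in spirit. For each summand in $R_i(\tau)$, I would apply the induction hypothesis \eqref{hypothesis k-1} to control the lower-order norms (crucially, every multi-index appearing in $R_1,\ldots,R_4$ has total length at most $k-1$, so the hypothesis applies on both sides of each bilinear expression), use weighted Cauchy--Schwarz with a small parameter to absorb any factor $|||H^m_{\delta_1}H^n_{\delta_2}f|||_{H^2_x,\sigma}$ into a small fraction of $\int_0^t |||H^m_{\delta_1}H^n_{\delta_2}f|||^2_{H^2_x,\sigma}d\tau$ on the left-hand side of \eqref{integration-k}, and then estimate the resulting combinatorial sums over binomial coefficients and factorials.

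For $R_1$, which contains no factor $|||H^m_{\delta_1}H^n_{\delta_2}f|||_{H^2_x,\sigma}$ to absorb, I would simply bound $\tau^{2(\delta_j-1)}\le T^{2(\delta_j-1)}$ on $[0,T]$ and apply \eqref{hypothesis k-1} to the time integrals of $|||H^{m-1}_{\delta_1}H^{n}_{\delta_2}f|||^2$ and $|||H^{m}_{\delta_1}H^{n-1}_{\delta_2}f|||^2$, using $m(m-3)!\le 2(m-2)!$ (with the convention $k!=1$ for $k\le 0$ covering small $m$) to produce \eqref{R-1+}. For $R_2, R_3, R_4$, I would apply the elementary inequality $ab\le \varepsilon a^2+\frac{1}{4\varepsilon}b^2$ inside the time integral, choosing $\varepsilon$ so small that after summation over $l$ and $p$ the absorbed coefficient equals $1/16$ for $R_2$ and $1/32$ for $R_3, R_4$. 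The residual factors $\|H^l_{\delta_1}H^p_{\delta_2}f\|^2_{L^\infty_t H^2_xL^2_v}$ and $\int_0^t|||H^{m-l}_{\delta_1}H^{n-p}_{\delta_2}f|||^2_{H^2_x,\sigma}d\tau$ are then bounded via \eqref{hypothesis k-1}, yielding terms proportional to $A^{2(m+n-1)}\bigl((l-2)!(p-2)!(m-l-2)!(n-p-2)!\bigr)^2$ multiplied by binomial coefficients and various algebraic/factorial weights.

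The main obstacle is the combinatorial sum estimation. After these reductions, the central bound to establish is
\begin{equation*}
\sum_{l=0}^{m}\binom{m}{l}(l-2)!(m-l-2)!\le C(m-2)!,
\end{equation*}
which follows from the identity $\binom{m}{l}(l-2)!(m-l-2)!/(m-2)!= m(m-1)/[l(l-1)(m-l)(m-l-1)]$ and the convergence of $\sum_{l\ge 2}1/[l(l-1)]$, with the boundary cases $l\in\{0,1,m-1,m\}$ handled directly via the convention $k!=1$ for $k\le 0$. An analogous bound is used for the sum in $p$. For $R_3$, the extra factor $\sqrt{(m-l+n-p)!}\,(\sqrt{C_0}T^{\delta_j})^{m-l+n-p}$ is handled by $(j+k)!\le 2^{j+k}j!k!$ to split the factorial, with the geometric part absorbed into $A$ by choosing $A$ large enough relative to $\sqrt{C_0}T^{\delta_j}$. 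For $R_4$, the factor $(p+l)^{[\gamma]/2+2}\sqrt{(p+l)!}\,t^{\delta_1 l+\delta_2 p}$ is treated analogously, using the same splitting and the fact that the algebraic prefactor $(p+l)^{[\gamma]/2+2}$ grows polynomially and is therefore dominated by $A^{(p+l)/2}$ for $A$ large. Taking $A$ sufficiently large relative to all the constants $C_0,\ldots,C_4,T,\gamma,\delta_1,\delta_2$ makes every universal combinatorial constant fit into the prefactors $A_1,\ldots,A_4$, completing each of the four estimates.
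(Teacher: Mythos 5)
Your proposal is correct and follows essentially the same route as the paper: bound $R_1$ directly from the induction hypothesis, absorb the top-order factor $|||H^{m}_{\delta_{1}}H^{n}_{\delta_{2}}f|||_{H^{2}_{x},\sigma}$ in $R_2,R_3,R_4$ by weighted Cauchy--Schwarz, split $\sqrt{(p+l)!}$ via $(p+q)!\le 2^{p+q}p!\,q!$ and dominate the resulting geometric and polynomial prefactors by powers of $A$, and close with the convolution-type bound $\sum_l\binom{m}{l}(l-2)!(m-l-2)!\lesssim(m-2)!$ (the paper's \eqref{summation}), for which you even supply the justification the paper omits. The only quibbles are cosmetic constants (e.g.\ $m(m-3)!\le 3(m-2)!$ rather than $2(m-2)!$ at $m=3$), which do not affect the argument.
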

We will give the proofs of these lemmas in the next section.

\bigskip
\noindent
{\bf End of Proof of Proposition \ref{prop 4.2}:}

Choose $0<\epsilon<1$ such that
$$2C_{2}\epsilon\le\frac{1}{16}, \qquad 16(C_{2}B\epsilon)^{2}\le\frac12.$$
Since \eqref{hypothesis k-1} holds true for any $1\le m+n\le k-1$, and \eqref{k=0}, then combine the results of Lemma \ref{lemma5.2}-Lemma \ref{lemma5.3}, we get for all $0<t\le T$
\begin{equation*}
\begin{split}
    &\|H^{m}_{\delta_{1}}H^{n}_{\delta_{2}}f(t)\|^{2}_{H_{x}^{2}L^{2}_{v}}+\int_{0}^{t}|||H^{m}_{\delta_{1}}H^{n}_{\delta_{2}}f(\tau)|||_{H^{2}_{x}, \sigma}^{2}d\tau\\
    &\le C_{6}\int_{0}^{t}\|H^{m}_{\delta_{1}}H^{n}_{\delta_{2}}f(\tau)\|^{2}_{H^{2}_{x}L^{2}_{v}}d\tau+\frac12\|H^{m}_{\delta_{1}}H^{n}_{\delta_{2}}f\|^{2}_{L^{\infty}(]0, t]; H^{2}_{x}L^{2}_{v})}\\
    &\quad+A_{0}(A^{m+n-1}(m-2)!(n-2)!)^{2},
\end{split}
\end{equation*}
if we choose $A\ge1$, here $A_{0}=A_{1}+A_{2}+A_{3}+A_{4}$, from this, we have for all $T>0$
\begin{equation*}
\begin{split}
    &\frac12\|H^{m}_{\delta_{1}}H^{n}_{\delta_{2}}f\|^{2}_{L^{\infty}(]0, T]; H^{2}_{x}L^{2}_{v})}\le C_{6}\int_{0}^{T}\|H^{m}_{\delta_{1}}H^{n}_{\delta_{2}}f(\tau)\|^{2}_{H^{2}_{x}L^{2}_{v}}d\tau\\
    &\qquad+A_{0}(A^{m+n-1}(m-2)!(n-2)!)^{2},
\end{split}
\end{equation*}
so that, we have for all $0<t\le T$
\begin{equation}\label{integration k}
\begin{split}
    &\|H^{m}_{\delta_{1}}H^{n}_{\delta_{2}}f(t)\|^{2}_{H_{x}^{2}L^{2}_{v}}+\int_{0}^{t}|||H^{m}_{\delta_{1}}H^{n}_{\delta_{2}}f(\tau)|||_{H^{2}_{x}, \sigma}^{2}d\tau\\
    &\le 2C_{6}\int_{0}^{t}\|H^{m}_{\delta_{1}}H^{n}_{\delta_{2}}f(\tau)\|^{2}_{H^{2}_{x}L^{2}_{v}}d\tau+2A_{0}(A^{m+n-1}(m-2)!(n-2)!)^{2}.
\end{split}
\end{equation}
Using Gronwall inequality, one has for all $0<t\le T$
\begin{equation*}
\begin{split}
    &\|H^{m}_{\delta_{1}}H^{n}_{\delta_{2}}f(t)\|^{2}_{H_{x}^{2}L^{2}_{v}}\le2A_{0}(A^{m+n-1}(m-2)!(n-2)!)^{2}(1+2C_{6}Te^{2C_{6}T}),
\end{split}
\end{equation*}
plugging it back into \eqref{integration k}, one can deduce
\begin{equation*}
\begin{split}
    &\|H^{m}_{\delta_{1}}H^{n}_{\delta_{2}}f(t)\|^{2}_{H_{x}^{2}L^{2}_{v}}+\int_{0}^{t}|||H^{m}_{\delta_{1}}H^{n}_{\delta_{2}}f(\tau)|||_{H^{2}_{x}, \sigma}^{2}d\tau\\
    &\le 2A_{0}(1+2C_{6}Te^{2C_{6}T})^{2}(A^{m+n-1}(m-2)!(n-2)!)^{2}.
\end{split}
\end{equation*}
We prove then
\begin{equation*}
\begin{split}
    &\|H^{m}_{\delta_{1}}H^{n}_{\delta_{2}}f(t)\|^{2}_{H_{x}^{2}L^{2}_{v}}+\int_{0}^{t}|||H^{m}_{\delta_{1}}H^{n}_{\delta_{2}}f(\tau)|||_{H^{2}_{x}, \sigma}^{2}d\tau\le (A^{m+n-\frac12}(m-2)!(n-2)!)^{2},
\end{split}
\end{equation*}
if we choose the constant $A$ such that
$$2A_{0}(1+2C_{6}Te^{2C_{6}T})^{2}\le A.$$
 We finish the proof of Proposition \ref{prop 4.2}.

\bigskip

{\bf Proof of Theorem  \ref{mainresult}:} Set $\lambda>\frac32$, define $\delta_{1}=\lambda, \delta_{2}=\frac\lambda2+\frac34$. Let $T_{j}, j=1, \cdots, 4$ be the vector fields in section \ref{s2}, then $\partial_{x_1}$ and $A_{+, 1}$ can be generated by the linear combination \eqref{linear combination}. Let $f$ be the smooth solution of Cauchy problem \eqref{1-2} satisfying \eqref{smint}, then for all $\alpha_{1}, m\in\mathbb N$ and $0<t\le T$
\begin{equation}\label{A}
\begin{split}
     &t^{(\lambda+1)\alpha_{1}+\lambda m}\|\partial^{\alpha_{1}}_{x_{1}}A^{m}_{+, 1}f(t)\|_{H^{2}_{x}L^{2}_{v}}\\
     &=\|(T_{1}+T_{2})^{\alpha_{1}}(T_{3}+T_{4})^{m}f(t)\|_{H^{2}_{x}L^{2}_{v}}\\
     &\le\sum_{j=0}^{\alpha_{1}}\sum_{k=0}^{m}C_{\alpha_{1}}^{j}C^{k}_{m}\left|\frac{(\delta_2+ 1)(\delta_1+1)}{\delta_2-\delta_1}\right|^{\alpha_{1}+m}\\
     &\quad\times (T+1)^{(\delta_{1}-\delta_{2})(\alpha_{1}-j+m-k)}\|H^{j+k}_{\delta_{1}}H^{\alpha_{1}+m-j-k}_{\delta_{2}}f(t)\|_{H^{2}_{x}L^{2}_{v}}.
\end{split}
\end{equation}
From Proposition \ref{prop 4.2}, we have that for all $0<t\le T$ and $\alpha_{1}, m\in\mathbb Z_{+}$
\begin{equation*}
\begin{split}
     &\|H^{j+k}_{\delta_{1}}H^{\alpha_{1}+m-j-k}_{\delta_{2}}f(t)\|_{H^{2}_{x}L^{2}_{v}}\\
     &\le A^{\alpha_1+m-\frac12}(j+k-2)!(\alpha_{1}+m-j-k-2)!\\
     &\le A^{\alpha_{1}+m-\frac12}(\alpha_{1}+m)!,
\end{split}
\end{equation*}
with $j=0, 1, \cdots, \alpha_{1}$ and $k=0, 1, \cdots, m$, here we use the fact that $p!q!\le(p+q)!$. Plugging it back into \eqref{A}, since $\delta_{1}>\delta_{2}$ and $A\ge1$, then one can deduce that for all $0<t\le T$
\begin{equation*}
\begin{split}
     &t^{(\lambda+1)\alpha_{1}+\lambda m}\|\partial^{\alpha_{1}}_{x_{1}}A^{m}_{+, 1}f(t)\|_{H^{2}_{x}L^{2}_{v}}\\
     &\le\sum_{j=0}^{\alpha_{1}}\sum_{k=0}^{m}C_{\alpha_{1}}^{j}C^{k}_{m}\left(\frac{(\delta_2+ 1)(\delta_1+1)}{\delta_2-\delta_1}\right)^{\alpha_{1}+m}t^{(\delta_{1}-\delta_{2})(\alpha_{1}+m)}A^{\alpha_{1}+m-\frac12}(\alpha_{1}+m)!\\
     &\le \left(2A(T+1)^{\delta_{1}-\delta_{2}}\frac{(\delta_2+ 1)(\delta_1+1)}{\delta_2-\delta_1}\right)^{\alpha_{1}+m}(\alpha_{1}+m)!.
\end{split}
\end{equation*}
Similarly, the above inequality is also true for $\partial^{\alpha_{1}}_{x_{j}}A^{m}_{+, j}$ with $j=2, 3$.
From \eqref{norm}, we have
\begin{equation*}
\begin{split}
     &\|{\partial_x^{\alpha} \nabla_{\mathcal H_{+}}^{m}  f(t)}\|^{2}_{H^2_{x}L^2_v}=\sum_{|\beta|=m}\frac{m!}{\beta!}\|{\partial_x^{\alpha} A_+^{\beta}  f(t)}\|^2_{H^2_{x}L^2_v}\\
     &\le\sum_{|\beta|=m}\frac{m!}{\beta!}\left(\sum_{j=1}^3\|{\partial^{|\alpha|}_{x_{j}}A^{m}_{+, j}f(t)}\|_{H^2_{x}L^2_v}\right)^2\le 3^{m}\left(\sum_{j=1}^3\|{\partial^{|\alpha|}_{x_{j}}A^{m}_{+, j}f(t)}\|_{H^2_{x}L^2_v}\right)^2,
\end{split}
\end{equation*}
here we use
$$\sum_{|\beta|=m}\frac{m!}{\beta!}=3^{m}, \quad \beta\in\mathbb N^{3}.$$
And therefore,  for any $0<t\le T=1$
\begin{equation*}
\begin{split}
     &t^{(\lambda+1)|\alpha|+ \lambda m} \|{\partial_x^{\alpha} \nabla_{\mathcal H_{+}}^{m}  f(t)}\|_{H^2_{x}L^2_v}
     \le t^{(\lambda+1)|\alpha|+ \lambda m}3^{\frac{m}{2}}\sum_{j=1}^{3}\|\partial^{|\alpha|}_{x_{j}}A^{m}_{+, j}f(t)\|_{H^{2}_{x}L^{2}_{v}}\\
     &\le3\left(6A2^{\delta_1-\delta_2}\frac{(\delta_2+ 1)(\delta_1+1)}{\delta_1-\delta_2} \right)^{|\alpha|+m}(|\alpha|+m)!\leq C^{|\alpha|+m+1} (|\alpha|+m)!,
\end{split}
\end{equation*}
here $C\le\max\{3, 6A2^{\delta_1-\delta_2}\frac{(\delta_2+ 1)(\delta_1+1)}{\delta_1-\delta_2} \}$.

This finishes the proof of main Theorem  \ref{mainresult}.

\section{Proofs of technical Lemmas}\label{s5}
In this section, we give the proof of lemma \ref{lemma5.2} and lemma \ref{lemma5.3}.

\bigskip
\noindent {\bf Proof of Lemma \ref{lemma5.2}.}\\
For all $0<t\le T$, by using Cauchy-Schwarz inequality and \eqref{k=0}, one has
\begin{equation*}
\begin{split}
     &2C_{2}\int_{0}^{t}\|H^{m}_{\delta_{1}}H^{n}_{\delta_{2}}f(\tau)\|_{H^{2}_{x}L^{2}_{v}}|||f(\tau)|||_{H^{2}_{x}, \sigma}|||H^{m}_{\delta_{1}}H^{n}_{\delta_{2}}f(\tau)|||_{H^{2}_{x}, \sigma}d\tau\\
     &\le2C_{2}B\epsilon\|H^{m}_{\delta_{1}}H^{n}_{\delta_{2}}f\|_{L^{\infty}(]0, t]; H^{2}_{x}L^{2}_{v})}\left(\int_{0}^{t}|||H^{m}_{\delta_{1}}H^{n}_{\delta_{2}}f(\tau)|||^{2}_{H^{2}_{x}, \sigma}d\tau\right)^{\frac12}\\
     &\le16(C_{2}B\epsilon)^{2}\|H^{m}_{\delta_{1}}H^{n}_{\delta_{2}}f\|^{2}_{L^{\infty}(]0, t]; H^{2}_{x}L^{2}_{v})}+\frac{1}{16}\int_{0}^{t}|||H^{m}_{\delta_{1}}H^{n}_{\delta_{2}}f(\tau)|||^{2}_{H^{2}_{x}, \sigma}d\tau.
\end{split}
\end{equation*}

We prove now 4 estimates in Lemma \ref{lemma5.3}.

\bigskip
\noindent {\bf Proof of \eqref{R-1+}.}\\
Since $f$ satisfies \eqref{hypothesis k-1} for all $1\le m+n\le k-1$, then for all $0<t\le T$
\begin{equation*}
\begin{split}
     \int_{0}^{t}R_{1}(\tau)d\tau
     &\le\left(1+\frac{1}{C_{1}}\right)\frac{(\delta_{1}m(T+1)^{\delta_{1}-1})^{2}}{8C_{1}}(A^{m+n-\frac32}(m-3)!(n-2)!)^{2}\\
     &\quad+\left(1+\frac{1}{C_{1}}\right)\frac{(\delta_{2}n(T+1)^{\delta_{2}-1})^{2}}{8C_{1}}(A^{m+n-\frac32}(m-2)!(n-3)!)^{2}\\
     &\le A_{1}(A^{m+n-\frac32}(m-2)!(n-2)!)^{2},
\end{split}
\end{equation*}
with $A_{1}=9\left(1+\frac{1}{C_{1}}\right)\frac{(\delta_{1}+\delta_{2})^{2}(T+1)^{2(\delta_{1}+\delta_{2})}}{8C_{1}}$ depends only on $C_{1}, \delta_{1}, \delta_{2}, T$.

\bigskip
\noindent {\bf Proof of \eqref{R-2+}.}\\
Since $f$ satisfies \eqref{hypothesis k-1} for all $1\le m+n\le k-1$, then by using Cauchy-Schwarz, one has for all $0<t\le T$
\begin{equation*}
\begin{split}
     &\int_{0}^{t}R_{2}(\tau)d\tau
     \le\frac{1}{16}\int_{0}^{t}|||H^{m}_{\delta_{1}}H^{n}_{\delta_{2}}f(\tau)|||^{2}_{H^{2}_{x}, \sigma}d\tau\\
     &\quad+12\left(C_{2}A^{m+n-1}\sum_{l=1}^{m}C_{m}^{l}(l-2)!(m-l-2)!\sum_{p=0}^{n-1}C_{n}^{p}(p-2)!(n-p-2)!\right)^{2}\\
     &\quad+12\left(C_{2}A^{m+n-1}(n-2)!\sum_{l=1}^{m-1}C_{m}^{l}(l-2)!(m-l-2)!\right)^{2}\\
     &\quad+12\left(C_{2}A^{m+n-1}(m-2)!\sum_{p=1}^{n}C_{n}^{p}(p-2)!(n-p-2)!\right)^{2}\\
     &\le\frac{1}{16}\int_{0}^{t}|||H^{m}_{\delta_{1}}H^{n}_{\delta_{2}}f(\tau)|||^{2}_{H^{2}_{x}, \sigma}d\tau+A_{2}\left(A^{m+n-1}(m-2)!(n-2)!\right)^{2},
\end{split}
\end{equation*}
here $A_{2}=15\left(24^{2}C_{2}\right)^{2}$ and we use the fact
\begin{equation}\label{summation}
\begin{split}
     \sum_{l=0}^{m}C_{m}^{l}(l-2)!(m-l-2)!\le 24(m-2)!.
\end{split}
\end{equation}

\bigskip
\noindent {\bf Proof of \eqref{R-3+}.}\\
By using Cauchy-Schwarz, one has for all $0<t\le T$
\begin{equation*}
\begin{split}
     &\int_{0}^{t}R_{3}(\tau)d\tau
     \le C_{3}\sum_{l=0}^{m-1}\sum_{p=0}^{n}C_{m}^{l}C^{p}_{n}\left(\sqrt{C_{0}}(T+1)^{\delta_{1}}\right)^{m-l}\left(\sqrt{C_{0}}(T+1)^{\delta_{2}}\right)^{n-p}\\
     &\qquad\times\sqrt{(m-l+n-p)!}\left(\int_{0}^{t}\|H^{l}_{\delta_{1}}H^{p}_{\delta_{2}}f\|^{2}_{H^{2}_{x}L^{2}_{v}}d\tau\right)^{\frac12}\left(\int_{0}^{t}|||H^{m}_{\delta_{1}}H^{n}_{\delta_{2}}f|||^{2}_{H^{2}_{x}, \sigma}d\tau\right)^{\frac12}\\
     &\quad+C_{3}\sum_{p=0}^{n-1}C^{p}_{n}\left(\sqrt{C_{0}}(T+1)^{\delta_{2}}\right)^{n-p}\sqrt{(n-p)!}\left(\int_{0}^{t}\|H^{m}_{\delta_{1}}H^{p}_{\delta_{2}}f\|_{H^{2}_{x}L^{2}_{v}}d\tau\right)^{\frac12}\\
     &\qquad\times\left(\int_{0}^{t}|||H^{m}_{\delta_{1}}H^{n}_{\delta_{2}}f|||_{H^{2}_{x}, \sigma}d\tau\right)^{\frac12}=K_{1}+K_{2}.
\end{split}
\end{equation*}
For the term $K_{1}$, since
\begin{equation}\label{root}
     \sqrt{m!}\le3(m-2)!, \quad \forall \ m\in\mathbb N,
\end{equation}
then by using \eqref{hypothesis k-1} and the fact
$$(p+q)!\le2^{p+q}p!q!, \quad \forall \ p, q\in\mathbb N,$$
we can get that
\begin{equation*}
\begin{split}
     &K_{1}
     \le 9C_{3}\sum_{l=0}^{m-1}\sum_{p=0}^{n}C_{m}^{l}C^{p}_{n}\left(\sqrt{2C_{0}}(T+1)^{\delta_{1}}\right)^{m-l}\left(\sqrt{2C_{0}}(T+1)^{\delta_{2}}\right)^{n-p}\\
     &\ \times(m-l-2)!(n-p-2)!T^{\frac12}A^{l+p-\frac12}(l-2)!(p-2)!\left(\int_{0}^{t}|||H^{m}_{\delta_{1}}H^{n}_{\delta_{2}}f|||^{2}_{H^{2}_{x}, \sigma}d\tau\right)^{\frac12},
\end{split}
\end{equation*}
so that, using \eqref{summation} and taking $A\ge (\sqrt{2C_{0}}(T+1)^{\delta_{1}+\delta_{2}})^{2}+1$, one can get
\begin{equation*}
\begin{split}
     &K_{1}
     \le 9C_{3}T^{\frac12}\sum_{l=0}^{m-1}\sum_{p=0}^{n}C_{m}^{l}(l-2)!(m-l-2)!A^{l+p-\frac12}A^{\frac{m-l+n-p}{2}}\\
     &\qquad\times\sum_{p=0}^{n}C^{p}_{n}(p-2)!(n-p-2)!\left(\int_{0}^{t}|||H^{m}_{\delta_{1}}H^{n}_{\delta_{2}}f|||^{2}_{H^{2}_{x}, \sigma}d\tau\right)^{\frac12},
\end{split}
\end{equation*}
noting that $m-l+n-p\ge1$, then $\frac{m-l+n-p}{2}\le m-l+n-p-\frac12$, from this one can get
\begin{equation*}
\begin{split}
     &K_{1}
     \le 9C_{3}T^{\frac12}A^{m+n-1}\sum_{l=0}^{m-1}\sum_{p=0}^{n}C_{m}^{l}(l-2)!(m-l-2)!\\
     &\qquad\times\sum_{p=0}^{n}C^{p}_{n}(p-2)!(n-p-2)!\left(\int_{0}^{t}|||H^{m}_{\delta_{1}}H^{n}_{\delta_{2}}f|||^{2}_{H^{2}_{x}, \sigma}d\tau\right)^{\frac12}\\
     &\le\frac{1}{64}\int_{0}^{t}|||H^{m}_{\delta_{1}}H^{n}_{\delta_{2}}f|||^{2}_{H^{2}_{x}, \sigma}d\tau+16\left(24^{2}c_{3}A^{m+n-1}(m-2)!(n-2)!\right)^{2},
\end{split}
\end{equation*}
with $c_{3}=9C_{3}\sqrt{T}$.

Similarly, we can obtain that
\begin{equation*}
\begin{split}
     K_{2}
     &\le\frac{1}{64}\int_{0}^{t}|||H^{m}_{\delta_{1}}H^{n}_{\delta_{2}}f|||^{2}_{H^{2}_{x}, \sigma}d\tau+16\left(24c_{3}A^{m+n-1}(m-2)!(n-2)!\right)^{2}.
\end{split}
\end{equation*}
Taking $A_{3}=16\left(24^{2}c_{3}\right)^{2}+16\left(24c_{3}\right)^{2}$, then combine the results of $K_{1}$ and $K_{2}$, one can deduce
\begin{equation*}
\begin{split}
     &\int_{0}^{t}R_{3}(\tau)d\tau\le A_{3}(A^{m+n-1}(m-2)!(n-2)!)^{2}+\frac{1}{32}\int_{0}^{t}|||H^{m}_{\delta_{1}}H^{n}_{\delta_{2}}f(\tau)|||^{2}_{H^{2}_{x}, \sigma}d\tau.
\end{split}
\end{equation*}

\noindent {\bf Proof of \eqref{R-4+}.}\\
Using again Cauchy-Schwarz and \eqref{hypothesis k-1}, one has for all $0<t\le T$
\begin{equation*}
\begin{split}
     &\int_{0}^{t}R_{4}(\tau)d\tau\le C_{4}\sum_{l=0}^{m}\sum_{p=1}^{n}C_{n}^{p}C_{m}^{l}(p+l)^{\frac{[\gamma]}{2}+2}\sqrt{(p+l)!}(T+1)^{\delta_{1} l}(T+1)^{\delta_{2} p}\\
     &\quad\times A^{m-l+n-p-\frac12}(m-l-2)!(n-p-2)!\left(\int_{0}^{t}|||H^{m}_{\delta_{1}}H^{n}_{\delta_{2}}f|||^{2}_{H^{2}_{x}, \sigma}d\tau\right)^{\frac12}\\
     &\quad+C_{4}\sum_{l=0}^{m}C_{m}^{l}l^{\frac{[\gamma]}{2}+1}\sqrt{l!}n(T+1)^{\delta_{1}l}(T+1)^{\delta_{2}}A^{m+n-l-\frac32}(m-l-2)!(n-3)!\\
     &\quad\times\left(\int_{0}^{t}|||H^{m}_{\delta_{1}}H^{n}_{\delta_{2}}f|||^{2}_{H^{2}_{x}, \sigma}d\tau\right)^{\frac12}+C_{4}\sum_{p=1}^{n}C^{p}_{n}p^{\frac{[\gamma]}{2}+2}\sqrt{p!}(T+1)^{\delta_{2} p}A^{m+n-p-\frac12}\\
     &\quad\times (m-2)!(n-p-2)!\left(\int_{0}^{t}|||H^{m}_{\delta_{1}}H^{n}_{\delta_{2}}f|||^{2}_{H^{2}_{x}, \sigma}d\tau\right)^{\frac12}+C_{4}\sum_{p=0}^{n}C_{n}^{p}p^{\frac{[\gamma]}{2}+1}\sqrt{p!}m\\
     &\quad\times (T+1)^{\delta_{1}}(T+1)^{\delta_{2}p}A^{m+n-p-\frac32}(m-3)!(n-p-2)!\left(\int_{0}^{t}|||H^{m}_{\delta_{1}}H^{n}_{\delta_{2}}f|||^{2}_{H^{2}_{x}, \sigma}d\tau\right)^{\frac12}\\
     &=S_{1}+S_{2}+S_{4}+S_{4}.
\end{split}
\end{equation*}
For the term $S_{1}$, using $p^{q}\le e^{p}q!$ and $(p+q)!\le 2^{p+q}p!q!$ for all $p, q\in\mathbb N$, one has
\begin{equation*}
\begin{split}
&S_{1}\le C_{4}\sqrt{([\gamma]+4)!}\sum_{l=0}^{m}\sum_{p=1}^{n}C_{n}^{p}C_{m}^{l}\sqrt{p!l!}(\sqrt{2e}(T+1)^{\delta_{1}})^{l}(\sqrt{2e}(T+1)^{\delta_{2}})^{p}\\
     &\quad\times A^{m-l+n-p-\frac12}(m-l-2)!(n-p-2)!\left(\int_{0}^{t}|||H^{m}_{\delta_{1}}H^{n}_{\delta_{2}}f|||^{2}_{H^{2}_{x}, \sigma}d\tau\right)^{\frac12},
\end{split}
\end{equation*}
taking $A\ge(\sqrt{2e}(T+1)^{\delta_{1}+\delta_{2}})^{2}+1$, one can obtain
\begin{equation*}
\begin{split}
&S_{1}\le C_{4}\sqrt{([\gamma]+4)!}\sum_{l=0}^{m}\sum_{p=1}^{n}C_{n}^{p}C_{m}^{l}\sqrt{p!l!}A^{\frac{l+p}{2}}A^{m-l+n-p-\frac12}\\
     &\quad\times (m-l-2)!(n-p-2)!\left(\int_{0}^{t}|||H^{m}_{\delta_{1}}H^{n}_{\delta_{2}}f|||^{2}_{H^{2}_{x}, \sigma}d\tau\right)^{\frac12},
\end{split}
\end{equation*}
noting that $l+p\ge1$, then $\frac{l+p}{2}\le l+p-\frac12$, by using Cauchy-Schwarz inequality
\begin{equation*}
\begin{split}
&S_{1}\le C_{4}\sqrt{([\gamma]+4)!}\sum_{l=0}^{m}\sum_{p=1}^{n}C_{n}^{p}C_{m}^{l}\sqrt{p!l!}A^{m+n-1}\\
     &\quad\times (m-l-2)!(n-p-2)!\left(\int_{0}^{t}|||H^{m}_{\delta_{1}}H^{n}_{\delta_{2}}f|||^{2}_{H^{2}_{x}, \sigma}d\tau\right)^{\frac12}\\
     &\le\frac{1}{128}\int_{0}^{t}|||H^{m}_{\delta_{1}}H^{n}_{\delta_{2}}f|||^{2}_{H^{2}_{x}, \sigma}d\tau\\
&\quad+32\left(c_{4}A^{m+n-1}\sum_{l=0}^{m}\sum_{p=1}^{n}C_{n}^{p}C_{m}^{l}\sqrt{p!l!}(m-l-2)!(n-p-2)!\right)^{2},
\end{split}
\end{equation*}
here $c_{4}=C_{4}\sqrt{([\gamma]+4)!}$. So that, from \eqref{summation} and \eqref{root}, we can conclude
\begin{equation*}
\begin{split}
S_{1}
&\le\frac{1}{128}\int_{0}^{t}|||H^{m}_{\delta_{1}}H^{n}_{\delta_{2}}f|||^{2}_{H^{2}_{x}, \sigma}d\tau+32\left(96^{2}c_{4}A^{m+n-1}(m-2)!(n-2)!\right)^{2}.
\end{split}
\end{equation*}
Similarly, we can get that for $j=2, 3, 4$
\begin{equation*}
\begin{split}
S_{j}
&\le\frac{1}{128}\int_{0}^{t}|||H^{m}_{\delta_{1}}H^{n}_{\delta_{2}}f|||^{2}_{H^{2}_{x}, \sigma}d\tau+32\left(96c_{4}A^{m+n-1}(m-2)!(n-2)!\right)^{2}.
\end{split}
\end{equation*}
And therefore, combining the results of $S_{1}-S_{4}$, then taking $A_{4}=35\left(96^{2}c_{4}\right)^{2}$, we can get that
\begin{equation*}
\begin{split}
     &\int_{0}^{t}R_{4}(\tau)d\tau\le A_{4}(A^{m+n-1}(m-2)!(n-2)!)^{2}+\frac{1}{32}\int_{0}^{t}|||H^{m}_{\delta_{1}}H^{n}_{\delta_{2}}f(\tau)|||^{2}_{H^{2}_{x}, \sigma}d\tau.
\end{split}
\end{equation*}

\bigskip
\bigskip
\noindent {\bf Acknowledgements.}
This work was supported by the NSFC (No.12031006) and the Fundamental
Research Funds for the Central Universities of China.


\begin{thebibliography}{99}


\bibitem{C-L-X} H.M. Cao, W.X. Li and C.J. Xu, Analytic smoothing effect of the spatially inhomogeneous Landau equations for hard potentials, {\em J. Math. Pures Appl.}, 176 (2023), 138-182.

\bibitem{CLX1} H. Chen, W.X. Li and C.J. Xu, Analytic smoothness effect of solutions for spatially homogeneous Landau equation, {\em J. Differ. Equ.}, 248(1) (2010), 77-94.

\bibitem{CLX2}H. Chen, W.X. Li and C.J. Xu, Propagation of Gevrey regularity for solutions of Landau equation,  {\em Kinet. Relat. Models}, 1(3) (2008), 355-368.

\bibitem{CLX3}H. Chen, W.X. Li and C.J. Xu, Gevrey regularity for solution of the spatially homogeneous Landau equation, {\em Acta Math. Sci. Ser. B Engl. Ed.}, 29(3) (2009), 673-686.

\bibitem{CLX}  J.L. Chen, W.X. Li and C.J. Xu, Sharp regularization effect for the non-cutoff Boltzmann equation with hard potentials, 2023, arXiv:2305.02861

\bibitem{CLL} Y. Chen, L. Desvillettes, L. He, Smoothing effects for classical solutions of the full Landau equation, {\em Arch. Ration. Mech. Anal.}, 193 (2009), 21-55.

\bibitem{DV} L. Desvillettes, C. Villani, On the spatially homogeneous Landau equation for hard potentials. I. Existence, uniqueness and smoothness, {\em Commun. Partial Differ. Equ.}, 25(1-2) (2000),  179-259.

\bibitem{DLSR} {R. Duan, S. Liu, S. Sakamoto and R. M. Strain}, {Global mild solutions of the Landau and non-cutoff Boltzmann equations}, {\em Comm. Pure Appl. Math.}, 74(5) (2021), {932-1020}.

\bibitem{FG} N. Fournier, H. Gu$\rm \acute{e}$rin, Well-posedness of the spatially homogeneous Landau equation for soft potentials, {\em J. Funct. Anal.}, 256 (8) (2009), 2542-2560.

\bibitem{GPR} T. Gramchev, S. Pilipovi${\rm \acute c}$ and L. Rodino, Classes of degenerate elliptic operators in Gelfand- Shilov spaces, {\em New Developments in Pseudo-Differential Operators. Birkh$\ddot a$user Basel}, (2009), 15-31.

\bibitem{G-1} Y. Guo, The Landau Equation in a Periodic Box, {\em Comm. Math. Phys.}, 231 (2002), 391-434.

\bibitem{H-1} C. Henderson and S. Snelson, $C^\infty$ Smoothing for Weak Solutions of the Inhomogeneous Landau Equation, {\em Arch. Ration. Mech. Anal.}, 236(1) (2020), 113-143.

\bibitem{LX} H.G. Li and C.J. Xu, Cauchy problem for the spatially homogeneous Landau Equation with Shubin class initial datum and  Gelfand-Shilov smoothing effect, {\rm Siam J. Math. Anal.}, 51(1) (2019), 532-564.

\bibitem{L-1} H.G. Li and C.J. Xu, Analytic smoothing effect of non-linear spatially homogeneous Landau equation with hard potentials, {\em Sci. China Math.}, 65 (2022), 2079-2098.

\bibitem{LX1} H.G. Li and C.J. Xu, Analytic Gelfand-Shilov smoothing effect of the spatially homogeneous Landau equation with hard potentials, {\em Discrete and Continuous Dynamical Systems - B.}, doi: 10.3934/dcdsb.2023157

\bibitem{LX2} H.G. Li and C.J. Xu, Analytic smoothing effect of the linear Landau equation with soft potential, 2022. arXiv:2205.07270.

\bibitem{LX3} H.G. Li and C.J. Xu, GelfandšCShilov smoothing effect of the spatially homogeneous Landau equation with moderately soft potential, {\em Math. Meth. Appl. Sci.}, (2023), 1-28. DOI 10.1002/mma.9325

\bibitem{M-1} Y. Morimoto, K. Pravda-Starov, C.J. Xu, A remark on the ultra-analytic smoothing properties of the spatially homogeneous Landau equation, {\em Kinet. Relat. Models}, 6(4) (2013), 715-727.

\bibitem{MX} Y. Morimoto, C.J. Xu, Ultra-analytic effect of Cauchy problem for a class of kinetic equations, {\em J. Differ. Equ.}, 247(2) (2009), 596-617.

\bibitem{M-2} Y. Morimoto and C. J. Xu, Analytic smoothing effect of the nonlinear Landau equation of Maxwellian molecules, {\em Kinet. Relat. Models}, 13(5) (2020), 951-978.

\bibitem{V1} C. Villani, On the spatially homogeneous Landau equation for Maxwellian molecules, {\em Mathematical Methods and Methods in Applied Sciences}, 8(6) (1998), 957-983.

\bibitem{V2} C. Villani, On a new class of weak solutions to the spatially homogeneous Boltzmann and Landau equations, {\em Arch. Ration. Mech. Anal.}, 143 (3) (1998), 273-307.

\bibitem{W} K.C. Wu, Global in time estimates for the spatially homogeneous Landau equation with soft potentials, {\em J. Funct. Anal.}, 266 (2014), 3134-3155.

\end{thebibliography}
\end{document}